\title{Generic stability, regularity, and quasiminimality}
\date{September 19th, 2010 }
\author{Anand Pillay\thanks{Supported
by EPSRC grant EP/F009712/1}\\University of Leeds \and Predrag
Tanovi\'c\thanks{Supported by the Ministry of Science and
Technology of Serbia}\\Mathematical Institute SANU, Belgrade}
\newcommand{\nequiv}{\makebox[1em]{$\not$\makebox[.6em]{$\equiv$}}}
\newcommand \nmodels {\mathop{\not\models}}
\def \tp {{\rm tp}}
\def \acl {{\rm acl}}
\def \ecl {{\rm ecl}}
\def \Th {{\rm Th}}
\def \cl {{\rm cl}}
\def \Cl {{\rm Cl}}
\def \ccl {{\rm ccl}}
\def \Aut {{\rm Aut}}
\def \lqq {\,\preccurlyeq\,}
\def \nlqq {\,\mathop{\not \!  \preccurlyeq }\,}
\newtheorem{thm}{Theorem}%[section]
 \newtheorem{prop}{Proposition}[section]
 \newtheorem{lem}{Lemma}[section]
 \newtheorem{cor}{Corollary}%[section]
\theoremstyle{definition}
 \newtheorem{exm}{Example}[section]
 \newtheorem{defn}{Definition}[section]
\theoremstyle{remark}
 \newtheorem{rmk}{Remark}[section]
 \newtheorem*{Qst}{Question}
\begin{document}

\maketitle

\begin{abstract}
We study the notions generic stability, regularity, homogeneous
pregeometries, quasiminimality, and their mutual relations, in
arbitrary first order theories. We prove that
``infinite-dimensional homogeneous pregeometries" coincide with
generically stable strongly regular types $(p(x), x=x)$. We prove
that quasiminimal structures of cardinality at least $\aleph_{2}$
are ``homogeneous pregeometries". We prove that the ``generic
type" of an arbitrary quasiminimal structure is ``locally strongly
regular". Some of the results depend on a general dichotomy for
``regular-like" types: generic stability, or the existence of a
suitable definable partial ordering.
\end{abstract}

\maketitle

\section{Introduction}\label{s1}
The first author was motivated partly by hearing Wilkie's talks on his program for proving Zilber's
conjecture that the complex exponential field is quasiminimal (definable subsets are
countable or co-countable), and wondering about the first order (rather than infinitary)
consequences of the approach. The second author was partly motivated by his
interest in adapting his study of minimal structures (definable subsets are finite or cofinite)
and his dichotomy theorems (\cite{Tan}), to
the quasiminimal context.

Zilber's conjecture (and the approach to it outlined by Wilkie) is
closely related to the existence and properties of a canonical
pregeometry on the complex exponential field. See the end of
section \ref{s4} for a more detailed discussion. Also in section
\ref{s5} of this paper we discuss to what extent a pregeometry can
be recovered just from quasiminimality, sometimes assuming the
presence of a definable group structure. This continues in a sense
an earlier study of the general model theory of quasiminimality by
Itai, Tsuboi, and Wakai \cite{ITW}.

A pregeometry is a closure relation on subsets of a not
necessarily saturated structure $M$ satisfying usual properties
(including exchange). We will also assume ``homogeneity"
($\tp(b/A)$ is unique for $b\notin \cl(A)$) and
``infinite-dimensionality" ($\dim(M)$ is infinite). One of the
points of this paper  then is that the canonical ``generic type"
$p$ of the pregeometry is ``generically stable" and regular. This
includes the statement that on realizations of $p$, the closure
operation is precisely forking in the sense of Shelah (see Theorem
\ref{Tpr}). Another main point of the paper is that a quasiminimal
structure of cardinality at least $\aleph_{2}$  carries, in a
canonical fashion, a homogeneous (in the above sense),
infinite-dimensional pregeometry, improving on results in
\cite{ITW}.

Generic stability, the stable-like behavior of a given complete
type vis-a-vis forking, was studied in several papers including
\cite{Shelah783} and \cite{NIPII}, but mainly in the context of
theories with $NIP$  (i.e. without the independence property).
Here we take the opportunity, in section \ref{s2},  to give
appropriate definitions in an arbitrary ambient theory $T$, as
well as discussing generically stable (strongly) regular types.

The notion of a regular type is central in stability theory and
classification theory, where the counting of models of superstable
theories is related to dimensions of regular types. Here (section
\ref{s3}) we give appropriate generalizations of (strong)
regularity for an arbitrary theory $T$ (although it does not agree
with the established definitions for simple theories). In section
\ref{s3} a basic dichotomy theorem (Theorem \ref{Pr1}) is proved
for global regular types $p$; roughly speaking, either $p$ is
generically stable, or there is  certain definable partial
ordering on the set of realizations of $p$. A generalization of
this theorem is given in section \ref{s6} (Theorem \ref{Tqg}),
which lies behind our result on quasiminimal structures of
cardinality $\geq \aleph_{2}$.

In section \ref{s7} a local version of regularity is given and
applied to the analysis of quasiminimal structures (see
Corollaries \ref{C3} and \ref{C4}).

\vspace{2mm}   The current paper is a revised and expanded version
of a preprint ``Remarks on generic stability, pregeometries, and
quasiminimality" by the first author, which was written and
circulated in June 2009.

\vspace{2mm}  The first author would like to thank Clifton Ealy,
Krzysztof Krupinski, and Alex Usvyatsov for various helpful
conversations and comments. After seeing the first author's
preliminary  results (and talk at a meeting in Lyon in July 2009),
Ealy pointed out that the commutativity of regular groups is
problematic (see our Question at the end of section \ref{s3}) and
suggested possible directions towards a counterexample. Krupinski
independently came up with examples such as Example \ref{field} of
the current paper. And after a talk by the first author on the
same subject in Bedlewo in August 2009, Usvyatsov pointed out
examples such as Example \ref{E1} and \ref{E2} of the current
paper. Both authors would like to thank Jonathan Kirby for helpful
comments and questions, and for clarifying the connection to
exponential algebraicity (see the end of section \ref{s4}).

\vspace{2mm} We now give our conventions and give a few basic
definitions relevant to the paper. As the referee kindly mentioned
this paper maybe of interest to readers who are not so familiar
with stability-style model theory, and so following his/her
suggestion, we give more details than we usually would concerning
some standard constructions.

$T$ denotes an arbitrary complete  $1$-sorted theory in a language
$L$ and $\bar{M}$ denotes a saturated (monster) model of $T$. As a
rule $a,b,c,...$ denote elements of $\bar M$, and
$\bar{a},\bar{b},\bar{c}$  denote finite tuples of elements. (But
in some situations $a,b,..$ may denote elements of ${\bar
M}^{eq}$.) $A,B,C$ denote  small subsets, and  $M,M_0, ...$ denote
small elementary submodels of $\bar{M}$. By a ``global type" we mean morally a complete type
over a sufficiently saturated model. In practise we will mean a complete type
$p(\bar{x})\in S(\bar{M})$ over the ``monster model". Such a type $p$ is said to be $A$-invariant if
 $p$ is
$\Aut(\bar{M}/A)$-invariant;  and $p$ is said to be invariant   if
it is $A$-invariant for some small $A$. Notice that by
saturation/homogeneity of  ${\bar M}$, the $A$-invariance of $p$
is equivalent to $p$ {\em not splitting} over $A$, in the sense
that for any $L$-formula $\phi(\bar{x},\bar{y})$ and $\bar{b}$
from ${\bar M}$, whether or not $\phi(\bar{x},\bar{b})\in p$
depends on $\tp(\bar{b}/A)$. So an $A$-invariant type $p(\bar{x})$
comes with a kind of ``infinitary" defining schema $d_{p}$ over
$A$. Namely for a given $L$-formula  $\phi(\bar{x},\bar{y})$,
$d_{p}(\phi(\bar{x},\bar{y}))$ is the set of  $q(\bar{y})\in S(A)$
such that for some (any) $\bar{b}$ realizing $q$,
$\phi(\bar{x},\bar{b}) \in p$.

We now explain how to build  the ``nonforking iterates"  $p^{(n)}(\bar{x}_{1},..,\bar{x}_{n})\in S({\bar M})$ of $p$.
Let $\phi(\bar{x}_{1}, \bar{x}_{2},\bar{c})$ be a formula over ${\bar M}$  (with witnessed parameters $\bar{c}$).
We will put such a formula  in $p^{(2)}(\bar{x}_{1},\bar{x}_{2})$ if  for some (any) $\bar{a}_{1}$ realizing $p|(A,\bar{c})$  (the restriction of $p$ to $A,\bar{c}$), the formula  $\phi(\bar{a}_{1},\bar{x}_{2},\bar{c})$ is in $p(\bar{x}_{2})$. Having defined  $p^{(n)}$, we put a formula
$\phi(\bar{x}_{1},..,\bar{x}_{n},\bar{x}_{n+1},\bar{c})$ in $p^{(n+1}$ if  for for some (any)
$(\bar{a}_{1},...,\bar{a}_{n})$ realizing $p^{(n)}|(A,\bar{c})$, the formula
$\phi(\bar{a}_{1},..,\bar{a}_{n},\bar{x}_{n+1},\bar{c})$ is in $p(\bar{x}_{n+1})$.   This construction depends only on the defining schema $d$ of $p$.  It is clear that
$p^{(n)}(\bar{x}_{1},..,\bar{x}_{n}) \subseteq  p^{(n+1)}(\bar{x}_{1},...,\bar{x}_{n+1})$, and we let $p^{(\omega)}$ be the increasing union of the $p^{(n)}$'s. It is not hard to see that any realization $(\bar{a}_{i}:i=1,2,....)$ of $p^{(\omega)}$  (in an elementary extension of the monster model!) is an indiscernible sequence over ${\bar M}$.

Assuming the global type $p$ to be $A$-invariant, by a {\em Morley sequence in $p$ over $A$} we mean a realization, in ${\bar M}$, $(\bar{a}_{i}:i=1,2,....)$ of $p^{(\omega)}|A$.  Clearly this can also be obtained by choosing $\bar{a}_{1}$ to realize $p|A$, choosing  $\bar{a}_{2}$ to realize  $p|(A,\bar{a}_{1})$ etc. So a Morley sequence in $p$ over $A$ is among other things an $A$-indiscernible sequence, and can be stretched to an $A$-indiscernible sequence of any ordinal length  $\alpha$
(a Morley sequence in $p$ over $A$ of length $\alpha$). In any case clearly the type over $A$ of any Morley sequence of length $\alpha$ in $p$ over $A$ depends only on $p$ and $A$.

An invariant global type $p(\bar{x})$ is said to be {\em symmetric} if for any $n$, formula
$\phi(\bar{x}_{1},..,\bar{x}_{n})$ over ${\bar M}$ and permutation $\pi$ of $\{1,..,n\}$,
$\phi(\bar{x}_{1},...,\bar{x}_{n}) \in p^{(n)}(\bar{x}_{1},..,\bar{x}_{n})$ if and only if
$\phi(\bar{x}_{\pi(1)},...,\bar{x}_{\pi(n)}) \in p^{(n)}(\bar{x}_{1},..,\bar{x}_{n})$.

It is not hard to see that the condition for $n=2$ implies it for all $n$. Also note that $p$ is symmetric if and only if for any small $A$ such that $p$ is $A$-invariant, any Morley sequence in $p$ over $A$ is {\em totally indiscernible}.

In some parts of this paper we will be considering  analogous notions for complete types over ``large" but not necessarily saturated or homogeneous (in the sense of model theory) structures $M$, where we are no longer able to use the expression ``invariant" type.

Some other basic notions used in this paper are {\em definable type}, {\em heir}, {\em coheir}, {\em almost finitely satisfiability}. A  {\em definable type} (over $A$) refers usually to a complete type $p(x)$ say over a model $M$ such that for any $\phi(x,\bar{y})\in L$, $\{\bar{b}\in M:\phi(x,\bar{b})\in p\}$ is a definable set (over $A$) in $M$.
If $p(x)$ is a complete type over a model $M$ and $N$ is a larger model (elementary extension) then an {\em heir} of $p$ over $N$ is an extension $q(x)\in S(N)$ of $p$ such that for any formula $\phi(x,\bar{y})$ with parameters from $M$, if $\phi(x,\bar{c})\in q(x)$ for some $\bar{c}$ from $N$, then $\phi(x,\bar{b})\in p$ for some $\bar{b}\in M$. A basic
and easy fact is that if $p(x)\in S(M)$ is a definable type, then it has a unique heir over any larger model (which is precisely given by applying the defining schema for $p$ to the larger model).

In the same context, with $p(x)\in S(M)$, and $q(x)\in S(N)$ an extension of $p$, $q$ is said to be a {\em coheir} of $p$ if $q$ is finitely satisfiable in $M$ (any formula $\phi(x)$ in $q(x)$ is realized by some element of $M$, even though the formula may have parameters outside $M$).   Finally if $q(x)$ is a global complete type and $A$ a small set of parameters, $q$ is said to be {\em almost finitely satisfiable} in $A$ if $q$ is finitely satisfiable in any model (elementary substructure) of ${\bar M}$ which contains $A$.

Of course pervasive notions are {\em dividing} and {\em forking}
in the sense of Shelah. A formula $\phi(x,b)$ (with witnessed
parameters $b$) is said to {\em divide over $A$} if for some
$A$-indiscernible sequence $(b_{i}:i<\omega)$ of realizations of
$\tp(b/A)$, $\{\phi(x,b_{i}:i<\omega\}$ is inconsistent. The
formula is said to {\em fork over $A$} if it implies a finite
disjunction of formulas $\psi_{j}(x,c_{j})$ each of which divides
over $A$. The notions extend naturally to  (partial, or complete)
types in place of formulas.

In stable theories, the above notions cohere in the following
senses:
\begin{enumerate}
 \item Given a type $p(x)\in S(M)$ over a model and $N
\supseteq M$, $p$ has a unique nonforking extension over $N$ which
coincides with its unique heir over $N$ as well as with its unique
coheir over $N$.

\item Any complete type over a model is definable (in particular
any global type is invariant).

\item A global type $p(x)$ does not fork over $A$ iff it is
definable over $\acl^{eq}(A)$ iff it is almost finitely
satisfiable in $A$.

\item Moreover any global type is definable in a specific way over
some (any) Morley sequence. (See Proposition \ref{GS}(i) below.)
\end{enumerate}

\noindent
Although stable theories were originally the main environment for studying forking,
we saw subsequently the fundamental role of forking in simple theories,
and more recently its importance for $NIP$ theories, including $o$-minimal
theories and valued fields. In the current paper we are concerned to a large extent
with the role of forking in ``pregeometries" in general first order theories.

\medskip
Recall that $(P,\cl)$, where $\cl$ is an operation on subsets of
$P$, is a \emph{pregeometry} (or $\cl$ is a pregeometry on $P$) if
for all $A,B\subseteq P$ and $a,b\in P$ the following hold:

\begin{description}
\item[\rm Monotonicity]    \ \   $A\subseteq B$ \ implies \
$A\subseteq\cl(A)\subseteq\cl(B)$;

\item[\rm Finite character]  \ \ $\cl(A)=\bigcup
\{\cl(A_0)\,|\,A_0\subseteq A$ finite$\}$;

\item[\rm Transitivity]       \  \ $\cl(A)=\cl(\cl(A))$;

\item[\rm Exchange (symmetry)] \ \
$a\in\cl(A\cup\{b\})\setminus\cl(A)$ \  implies  \
$b\in\cl(A\cup\{a\})$.   \end{description} $\cl$ is called a
\emph{closure operator}    if it satisfies the first three
conditions.

\section{Generic stability}\label{s2}

\begin{defn}\label{D1}
A non-algebraic global type $p(\bar{x})\in S(\bar{M})$ is
generically stable if, for some small $A$,  it is $A$-invariant
and:
\begin{center}
if $(\bar{a}_i\,:\,i<\alpha)$ (any $\alpha$, not only $\omega$) is a Morley sequence in $p$ over $A$
then for any formula $\phi(\bar{x})$ (with parameters from
$\bar{M}$) \ $\{i\,:\ \models\phi(\bar{a}_i)\}$ is either finite
or co-finite.\end{center}
\end{defn}

\begin{rmk}
If $p$ is generically stable then as a witness-set $A$ in the
definition we can take any small $A$ such that $P$ is
$A$-invariant.
\end{rmk}

\begin{prop}\label{GS}
Let $p(\bar{x})\in S(\bar{M})$ be generically stable and
$A$-invariant. Then:

\begin{enumerate}
\item[(i)]   For any formula $\phi(\bar{x},\bar{y})\in L$ there is
$n_{\phi}$ such that for any Morley sequence
$(\bar{a}_i\,:\,i<\omega)$ of $p$ over $A$, and   any $\bar{b}$:
\begin{center}
 $\phi(\bar{x},\bar{b})\in p $ \ \ \   iff   \ \ \
$\models\bigvee_{w\subset\{0,1,...2n_{\phi}\},\,|w|=n_{\phi}+1}\bigwedge_{i\in
w}\phi(\bar{a}_i,\bar{b}).$\end{center}

\item[(ii)]   $p$ is definable over $A$ and almost finitely
satisfiable in $A$.

\item[(iii)]   Any Morley sequence of $p$ over $A$ is totally
indiscernible.

\smallskip
\item[(iv)]   $p$ is the unique global nonforking extension of
$p\,|\,A$.
\end{enumerate}
\end{prop}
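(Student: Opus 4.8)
The plan is to prove the four parts roughly in the order (i), (iii), (ii), (iv), since (i) is the key ``counting'' fact from which symmetry and definability follow.

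\textbf{Part (i).} First I would fix $\phi(\bar{x},\bar{y})$ and a Morley sequence $(\bar{a}_i : i<\omega)$ of $p$ over $A$, and show the right-to-left direction of the equivalence is automatic once we have a uniform bound, so the real content is producing $n_\phi$. To get the bound I would argue by contradiction: if no finite $n_\phi$ works, then by compactness one can find, for arbitrarily large $n$, a parameter $\bar{b}$ with $\phi(\bar{x},\bar{b})\notin p$ yet $\models\phi(\bar{a}_i,\bar{b})$ for at least $n+1$ of the first $2n+1$ terms (or the dual). Stretch the Morley sequence to length $\omega+\omega$ (possible since $p$ is $A$-invariant), so that $(\bar{a}_i : i<\omega)$ followed by $(\bar{a}_i : \omega\le i<\omega+\omega)$ is still $A$-indiscernible and the tail realizes $p|(A\cup\{\bar{a}_i:i<\omega\})$; in particular $\models\phi(\bar{a}_i,\bar{b})$ fails for all $i\ge\omega$ if we arrange $\bar{b}$ appropriately, or use that $\phi(\bar{x},\bar{b})\notin p$ to control the tail. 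The point is that $\{i : \models\phi(\bar{a}_i,\bar{b})\}$ is then neither finite nor cofinite for a suitable $\bar b$ and a suitable extended Morley sequence — contradicting generic stability. Making this extraction precise (the indiscernibility bookkeeping and the appeal to compactness to move from ``unbounded'' to ``infinite and co-infinite'') is the step I expect to be the main obstacle; it is the standard but slightly delicate argument underlying the $NIP$/generic-stability ``alternation bound'' results.

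\textbf{Part (iii): total indiscernibility.} Given (i), fix $A$-invariance and a Morley sequence $I=(\bar a_i : i<\omega)$ over $A$. By the remark on symmetry in the introduction, it suffices to handle transpositions, and since $I$ is already $A$-indiscernible it suffices to show $\tp(\bar a_1,\bar a_0/A) = \tp(\bar a_0,\bar a_1/A)$, i.e. that $p^{(2)}$ is symmetric. For any formula $\phi(\bar x_0,\bar x_1)$ over $A$, apply the formula from (i) (with $\bar y$ the variable block for $\bar x_1$, and $\bar b = \bar a_1$): whether $\phi(\bar x_0,\bar a_1)\in p$ is computed by a condition on how often $\phi(\bar a_i,\bar a_1)$ holds along $I$; but $\bar a_1$ is one of the terms and the relevant ``majority'' count along any Morley sequence is insensitive to moving finitely many terms. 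Unwinding this, $\phi(\bar a_0,\bar a_1)$ holds iff $\phi(\bar a_1,\bar a_0)$ holds, which gives symmetry of $p^{(2)}$ and hence total indiscernibility of every Morley sequence over $A$. (An alternative is to derive this directly from (i) by noting both $\phi(\bar a_0,\bar a_1)$ and $\phi(\bar a_1,\bar a_0)$ are each equivalent to the same ``majority over $I$'' statement applied to $\phi$ and to its flip, and that these coincide by indiscernibility of $I$.)

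\textbf{Part (ii): definability over $A$ and almost finite satisfiability in $A$.} Definability is immediate from (i): the set $\{\bar b : \phi(\bar x,\bar b)\in p\}$ equals $\{\bar b : \models\bigvee_w\bigwedge_{i\in w}\phi(\bar a_i,\bar b)\}$, which is an $L$-formula with parameters $\bar a_0,\dots,\bar a_{2n_\phi}$; to see it is over $A$, note it does not depend on the choice of Morley sequence (all such sequences have the same type over $A$), and a definable family of sets invariant under $\Aut(\bar M/A)$ is defined over $\acl^{eq}(A)$ — in fact over $A$ after absorbing the finitely many conjugates, using that $p$ itself is $A$-invariant so the defining set is literally $A$-invariant as a subset of $\bar M$. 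For almost finite satisfiability in $A$: let $M_0\supseteq A$ be any small model and $\phi(\bar x,\bar b)\in p$ with $\bar b$ arbitrary; I want an element of $M_0$ satisfying it. Realize $p|M_0$ and build a Morley sequence $(\bar a_i:i<\omega)$ of $p$ over $M_0$ — this is also a Morley sequence over $A$. By (i) applied over $M_0$, $\phi(\bar x,\bar b)\in p$ forces $\models\phi(\bar a_i,\bar b)$ for ``most'' $i$; but more to the point, take a Morley sequence inside $M_0$ is not available, so instead use the dual description: $p$ is finitely satisfiable in $M_0$ iff for every $\phi(\bar x,\bar b)\in p$ some element of $M_0$ satisfies it, and this follows because $p|M_0$, being definable over $A\subseteq M_0$ with definition scheme applied inside $M_0$, is the heir of $p|M_0$... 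I would instead argue cleanly: $p$ is $A$-invariant and (by (i)) finitely satisfiable in any Morley sequence of $p$ over $A$ contained in $M_0$; such sequences exist inside $M_0$ since $M_0$ realizes $p|A$ and hence, iterating, realizes $p^{(\omega)}|A$. Then every $\phi(\bar x,\bar b)\in p$ is satisfied by some $\bar a_i\in M_0$, giving finite satisfiability in $M_0$, hence almost finite satisfiability in $A$.

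\textbf{Part (iv): uniqueness of the nonforking extension.} Since $p$ is definable over $A$ (part (ii)), $p|A$ has a heir $p$ over $\bar M$, and more generally $p$ restricted to any $M\supseteq A$ is the unique heir of $p|A$; since $p$ is also almost finitely satisfiable in $A$, it is a coheir of $p|A$ over any $M\supseteq A$. A type that is both an heir and a coheir over a model does not fork over that model, so $p$ is a nonforking extension of $p|A$. For uniqueness, suppose $q\in S(\bar M)$ is any global nonforking extension of $p|A$. The standard argument: $q$ does not fork over $A$, so $q$ is finitely satisfiable in some (every) small model over $A$; pick such a model $M_0\supseteq A$. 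Build a long Morley sequence in $p$ over $M_0$ realizing also $q|M_0 = p|A$-extension inside it, apply total indiscernibility (iii) and the ``majority'' formula (i): for any $\phi(\bar x,\bar b)$, whether it is in $q$ is forced to agree with whether it is in $p$ because both are computed by the same count along a common Morley sequence of $p$ over $A$ (using that $q$, being finitely satisfiable in $M_0$, must put $\phi(\bar x,\bar b)$ in $q$ iff $\phi(\bar a_i,\bar b)$ holds cofinally, matching the condition from (i) for $p$). Hence $q=p$.

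Of the four, the real work is entirely in (i); (iii), (ii), (iv) are bookkeeping consequences once the alternation bound $n_\phi$ is in hand, modulo the routine facts ``heir $+$ coheir $\Rightarrow$ nonforking'' and ``$A$-invariant definable family $\Rightarrow$ defined over $\acl^{eq}(A)$, absorbed to $A$''.
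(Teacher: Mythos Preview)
Your outline for (i) and (iii) is essentially the paper's approach (the paper also gets the bound $n_\phi$ by stretching a Morley sequence to length $\omega+\omega$ via compactness, and for (iii) simply cites \cite{NIPII}). The genuine problems are in (ii) and (iv).

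\textbf{Part (ii), almost finite satisfiability.} Your final argument asserts that any small model $M_0\supseteq A$ contains a Morley sequence of $p$ over $A$, because ``$M_0$ realizes $p|A$ and hence, iterating, realizes $p^{(\omega)}|A$''. This is false in general: take $T$ the theory of an infinite set, $A=M_0$ a countable model, and $p$ the unique nonalgebraic global $1$-type; then $p|A$ is not realized in $M_0$ at all. The paper's device is different: choose a Morley sequence $I$ of $p$ over $A$ \emph{outside} $M_0$, but so that $\tp(I/M_0\bar{b})$ is finitely satisfiable in $M_0$ (this is possible because $p^{(\omega)}|A$ is a consistent type over $A\subseteq M_0$, hence finitely satisfiable in $M_0$, hence extends to a coheir over $M_0\bar{b}$). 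Then (i) forces $\models\phi(\bar{a}_i,\bar{b})$ for some $\bar{a}_i\in I$, and finite satisfiability of $\tp(I/M_0\bar{b})$ in $M_0$ pushes a witness into $M_0$.

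\textbf{Part (iv).} You write ``$q$ does not fork over $A$, so $q$ is finitely satisfiable in some (every) small model over $A$''. In an arbitrary theory this implication fails: nonforking does not imply coheir (that holds in stable theories, and in suitable forms under NIP, but neither hypothesis is available here). The paper does not go through finite satisfiability of $q$ at all. Instead it takes any sequence $(\bar{a}_i)$ with $\bar{a}_i\models q|(A,\bar{a}_0,\dots,\bar{a}_{i-1})$ and proves by induction, using only that $q$ does not \emph{divide} over $A$, that such a sequence is already a Morley sequence in $p$ over $A$. The inductive step combines nondividing of $q$ with the alternation bound from (i) and total indiscernibility from (iii) to force $q|(A,\bar{a}_0,\dots,\bar{a}_n)=p|(A,\bar{a}_0,\dots,\bar{a}_n)$. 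Once one knows that a ``$q$-Morley'' sequence built over $(A,\bar{b})$ is a $p$-Morley sequence, (i) immediately yields $\phi(\bar{x},\bar{b})\in q\Rightarrow\phi(\bar{x},\bar{b})\in p$, hence $q=p$. Your ``majority count along a common Morley sequence'' intuition is the right endgame, but the route to a common sequence must go through nondividing, not through an unjustified coheir claim.
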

\begin{proof} This is a slight elaboration of the proof of
Proposition 3.2 from \cite{NIPII}.

\smallskip
(i) \ First note that for any $\phi(\bar{x},\bar{y})\in L$ there
is $n_{\phi}$ such that for any Morley sequence
$(\bar{a}_i\,:\,i<\omega)$ in $p$ over $A$, and any $\bar{b}$
either at most $n_{\phi}$ many $\bar{a}_i$'s satisfy
$\phi(\bar{x},\bar{b})$ or at most $n_{\phi}$ many $\bar{a}_i$'s
satisfy $\neg\phi(\bar{x},\bar{b})$. For if not, then by
compactness we can find a Morley sequence in $p$ over $A$ of
length $\omega + \omega$ which violates Definition \ref{D1}.  Note
that if at most $n_{\phi}$ $\bar{a}_{i}$'s satisfy
$\neg\phi(\bar{x},\bar{b})$ then $\phi(\bar{x},\bar{b})\in p$: for
otherwise we could let $(\bar{a}_{\omega + j}:j<\omega)$ realize
$p^{(\omega)}$ restricted to
$A\cup\{\bar{a}_{i}:i<\omega\}\cup\{\bar{b}\}$, in which case
$(\bar{a}_{i}:i<\omega + \omega)$ is a Morley sequence in $p$ over
$A$ such that all but finitely many $\bar{a}_{i}$ for $i < \omega$
satisfy $\phi(\bar{x},\bar{b})$ and  all  $\bar{a}_{\omega + j}$
satisfy $\neg\phi(\bar{x},\bar{b})$, again a contradiction.

\smallskip
(ii) \ By (i) $p$ is definable (over a Morley sequence), so  by
$A$-invariance $p$ is definable over $A$. (If  $\psi(\bar{y},\bar{d})$ is the
$\phi(\bar{x},\bar{y})$ definition of $p$, then
by $A$-invariance of $p$, this formula is preserved, up to equivalence,
by automorphisms which fix $A$ pointwise, hence is equivalent to a formula over $A$.)
Let $M$ be a model containing $A$,
$\phi(\bar{x},\bar{c})\in p$ and let $I=(\bar{a}_i\,:\,i<\omega)$
be a Morley sequence in $p$ over $A$ such that $\tp(I/M\bar{c})$
is finitely satisfiable in $M$. Then, by (i),
$\phi(\bar{x},\bar{c})$ is satisfied by some $\bar{a}_i$ hence
also by an element of $M$.

\smallskip
(iii) \ This follows from (i) exactly as in the proof of the
Proposition 3.2 of \cite{NIPII}  (where NIP was not used).

\smallskip
(iv) \ Let
$q(\bar{x})$ be a global nonforking extension of $p\,|\,A$. We
will prove that $q=p$.

\smallskip
\noindent  \emph{Claim.}    Suppose
$(\bar{a}_0,....,\bar{a}_n,\bar{b})$ are such that
$\bar{a}_0\models q\,|\,A$,\, $\bar{a}_{i+1}\models
q\,|\,(A,\bar{a}_0,....,\bar{a}_i)$ and $b\models
q\,|\,(A,\bar{a}_0,....,\bar{a}_n)$. Then
$(\bar{a}_0,....,\bar{a}_n,\bar{b}) $ is a Morley sequence in $p$
over $A$.

\smallskip
\noindent\emph{Proof.}  \ We prove it by induction. Suppose we
have chosen $\bar{a}_0,...,\bar{a}_n$ as in the claim and we know
(induction hypothesis) that $(\bar{a}_0,...,\bar{a}_n)$ begins a
Morley sequence $I=(\bar{a}_i\,|\,i<\omega)$ in $p$ over $A$.
Suppose that $\phi(\bar{a}_0,...,\bar{a}_n,\bar{x})\in q$. Then we
claim that $\phi(\bar{a}_0,...,\bar{a}_{n-1},\bar{a}_i,x)\in q$
for all $i>n$. For otherwise, without loss of generality \
$\neg\phi(\bar{a}_0,...,\bar{a}_{n-1},\bar{a}_{n+1},\bar{x})\in
q$. But then, by indiscernibility of
$\{\bar{a}_i\bar{a}_{i+1}\,:\,i=n,n+2,n+4,...\}$ over
$(A,\bar{a}_0,...,\bar{a}_{n-1})$, and the nondividing of $q$ over
$A$, we have that
\begin{center} $\{\phi(\bar{a}_0,...,\bar{a}_{n-1},\bar{a}_i,x)\wedge
\neg\phi(\bar{a}_0,...,\bar{a}_{n-1},\bar{a}_{i+1},\bar{x})\,:\,i=n,n+2,n+4,...\}$
\end{center}
is consistent, which contradicts Definition \ref{D1}. Hence
\,$\models \phi(\bar{a}_0,...,\bar{a}_{n-1},\bar{a}_i,\bar{b})$
for all $i>n$. So by part (i),
$\phi(\bar{a}_0,...,\bar{a}_{n-1},\bar{x},\bar{b})\in p(\bar{x})$.
The inductive assumption gives that
$\tp(\bar{a}_0,...,\bar{a}_{n-1},\bar{a}_n/A)=
\tp(\bar{a}_0,...,\bar{a}_{n-1},\bar{b}/A)$, so by $A$-invariance
of $p$, $\phi(\bar{a}_0,...,\bar{a}_{n-1},\bar{x},\bar{a}_n)\in
p(\bar{x})$. Thus
$\models\phi(\bar{a}_0,...,\bar{a}_{n-1},\bar{a}_{n+1},\bar{a}_n)$
and, by total indiscernibility of $I$ over $A$ (part (iii)),
$\models\phi(\bar{a}_0,...,\bar{a}_{n-1},\bar{a}_n,\bar{a}_{n+1})$.
We have shown that
$q\,|\,(A,\bar{a}_0,...,\bar{a}_n)=p\,|\,(A,\bar{a}_0,...,\bar{a}_n)$,
which allows the induction process to continue. The claim is
proved.

\smallskip
Now suppose that $\phi(\bar{x},\bar{c})\in q(\bar{x})$. Let
$\bar{a}_i$ for $i<\omega$ be such that $\bar{a}_i$ realizes
$q\,|\,(A,\bar{c},\bar{a}_0,...,\bar{a}_{i-1})$ for all $i$. By
the claim $(\bar{a}_i\,;\,i<\omega)$ is a Morley sequence of $p$
over $A$. But $\phi(\bar{a}_i,\bar{c})$ for all $i$, hence by the
proof of (i), $\phi(\bar{x},\bar{c})\in p$. So $q=p$.
\end{proof}

Let us note for the record that for a global type $p$, generically stable implies
definable implies invariant, and these are strict implications.

\smallskip
Now we restate the notion of generic stability for groups from
\cite{NIPII} in the context of connected groups. Recall that a
definable (or even  type-definable) group $G$ is connected if it
has no relatively definable subgroup of finite index. A type
$p(x)\in S_G(\bar{M})$ is left $G$-invariant iff for all $g\in G$:
\begin{center}
\ $(g\cdot p)(x)=^{def}\{\phi(g^{-1}\cdot x)\,: \phi(x)\in
p(x)\}=p(x)$;\end{center} likewise for right $G$-invariant.

\begin{defn}
Let $G$ be a definable (or even   type-definable) connected group
in $\bar{M}$. $G$ is \emph{generically stable} if there is a
global complete 1-type $p(x)$ extending '$x\in G$' such that $p$
is generically stable and left $G$-invariant.
\end{defn}

As in the NIP context, we show that a generically stable left invariant type  is
also right invariant and  is unique such (and we will call it the
generic type):

\begin{lem}
Suppose that $G$ is generically stable, witnessed by $p(x)$. Then
$p(x)$ is the unique left-invariant   and  also the unique
right-invariant type.
\end{lem}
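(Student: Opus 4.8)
The plan is to follow the argument from the $NIP$ case: first show that $p$ is the unique left-$G$-invariant global type, and then deduce both the right-invariance of $p$ and the uniqueness of the right-invariant type by formal manipulation. Throughout I will use Proposition \ref{GS}: $p$ is $A_0$-invariant, definable over $A_0$, and almost finitely satisfiable in $A_0$, so that for any finite tuple $b$ the canonical $A_0$-invariant global extension $\tilde p$ of $p$ restricts over $\bar{M}b$ to a type which is simultaneously the heir and the coheir of $p$ over $\bar{M}b$; in particular, for $a\models\tilde p\,|\,\bar{M}b$ the type $\tp(a/\bar{M}b)$ is finitely satisfiable in $\bar{M}$ and contains ``$x\in G$''. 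I will also use the routine fact that a left-$G$-invariant, $B$-invariant global type (with $B$ containing the parameters defining $G$) remains left-invariant under $G$ of every elementary extension $\bar{M}^{+}\succ\bar{M}$: whether $\phi(g^{-1}x,\bar c)$ belongs to the canonical extension depends only on $\tp(g\bar c/B)$, and realizing this type inside $\bar{M}$ reduces the matter to left-$G(\bar{M})$-invariance; likewise on the right.

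The core is the following computation. Let $q$ be an arbitrary left-$G$-invariant global type; enlarge the base to a small $A$ so that $q$ is $A$-invariant while $p$ is still generically stable and $A$-invariant. Choose $b$ with $\tp(b/\bar{M})=q$ in some $\bar{M}^{+}\succ\bar{M}$ and let $a\models\tilde p\,|\,\bar{M}b$. I will show (i) $a^{-1}b\models q$, and (ii) $b^{-1}a\models p$. For (i): for each $m\in G(\bar{M})$ left-invariance of $q$ gives $\tp(m^{-1}b/\bar{M})=q$, so $\models\phi(m^{-1}b)$ for all $\phi\in q$; hence, fixing $\phi\in q$, the formula $\neg\phi(x^{-1}b)\wedge x\in G$ cannot lie in the complete type $\tp(a/\bar{M}b)$ (otherwise finite satisfiability in $\bar{M}$ would produce an $m\in G(\bar{M})$ with $\models\neg\phi(m^{-1}b)$), so $\phi(x^{-1}b)\in\tp(a/\bar{M}b)$ and $\models\phi(a^{-1}b)$; as $\phi$ was arbitrary, $a^{-1}b\models q$. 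For (ii): since $b\in G(\bar{M}^{+})$, the extended left-invariance of $p$ gives $b\cdot\tilde p=\tilde p$, so $\phi(b^{-1}x)\in\tilde p$ for every $\phi\in p$; this formula is over $\bar{M}b$, hence lies in $\tilde p\,|\,\bar{M}b=\tp(a/\bar{M}b)$, so $\models\phi(b^{-1}a)$, and thus $b^{-1}a\models p$. Since $b^{-1}a=(a^{-1}b)^{-1}$, (i) and (ii) give $p=\tp(b^{-1}a/\bar{M})=\tp(a^{-1}b/\bar{M})^{-1}=q^{-1}$, i.e.\ $q=p^{-1}$.

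The rest is then formal. Applying the computation with $q:=p$ (which is left-$G$-invariant) yields $p=p^{-1}$; hence $q=p^{-1}=p$ for \emph{every} left-$G$-invariant global type $q$, so $p$ is the unique left-$G$-invariant global type. Next, for $g\in G(\bar{M})$ the type $p\cdot g$ is again left-$G$-invariant, since $h\cdot(p\cdot g)=(h\cdot p)\cdot g=p\cdot g$ for all $h\in G(\bar{M})$; by uniqueness $p\cdot g=p$, so $p$ is right-$G$-invariant (and, by the extension fact, right-invariant under $G$ of every elementary extension). Finally, if $q$ is any right-$G$-invariant global type, then $q^{-1}$ is left-$G$-invariant, so $q^{-1}=p$ and $q=p^{-1}=p$; thus $p$ is also the unique right-$G$-invariant global type.

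I expect the main obstacle to be step (i): making left-invariance of $q$ interact usefully with $p$ requires $a$ to realize the \emph{coheir} of $p$ over $\bar{M}b$, i.e.\ $\tp(a/\bar{M}b)$ to be finitely satisfiable in $\bar{M}$, and this is exactly where generic stability of $p$ enters, through Proposition \ref{GS} — one has to be careful to record that for generically stable $p$ the heir, the coheir and the unique non-forking extension over $\bar{M}b$ all coincide, so that the same $a$ serves in both (i) and (ii). Secondary points to handle carefully are the re-choice of the invariance base so that it works simultaneously for $p$ and the given $q$, and the routine ``extension of invariance'' fact (including putting the parameters of $G$ into the base).
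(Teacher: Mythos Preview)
Your proof is correct and complete, but it follows a genuinely different route from the paper's. One minor blemish: the clause ``enlarge the base to a small $A$ so that $q$ is $A$-invariant'' is unjustified in general (an arbitrary left-$G$-invariant global type need not be invariant over any small set) and, fortunately, you never use it --- simply delete it.

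The paper works entirely inside $\bar M$ and invokes two other facets of Proposition~\ref{GS}. First, for a length-$2$ Morley sequence $(a,b)$ in $p$ over $A$, left-invariance gives $a^{-1}b\models p|A$, and \emph{total indiscernibility} (Proposition~\ref{GS}(iii)) gives $\tp(a,b/A)=\tp(b,a/A)$, hence also $b^{-1}a\models p|A$; varying $A$ yields $p=p^{-1}$, and then the one-line identity $p=g^{-1}\cdot p=(g^{-1}\cdot p)^{-1}=p^{-1}\cdot g=p\cdot g$ gives right-invariance. Second, for uniqueness, the paper takes a Morley sequence $(a_i)_{i<\omega}$ in $p$ over $A$ and $b\models q|(A,(a_i))$, uses left-invariance of $q$ to get $\models\phi(a_i\cdot b)$ for all $i$ and all $\phi\in q$ over $A$, and then applies the \emph{Morley-sequence definability} of $p$ (Proposition~\ref{GS}(i)) to conclude $\phi(x\cdot b)\in p$, whence $\phi\in p$ by the already-established right-invariance.

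Your approach instead passes to $\bar M^{+}\succ\bar M$ and uses only \emph{almost finite satisfiability} (Proposition~\ref{GS}(ii)) as the input from generic stability, packaging everything into the single computation $p=q^{-1}$ for arbitrary left-invariant $q$; specializing to $q=p$ recovers $p=p^{-1}$, and right-invariance then falls out of uniqueness via ``$p\cdot g$ is still left-invariant''. Your route is more uniform (one lemma does all the work) and treats right-uniqueness explicitly; the paper's route stays inside $\bar M$ and avoids the bookkeeping of extending left-invariance of $p$ to $G(\bar M^{+})$.
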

\begin{proof} First we prove that $p$ is right invariant.
Let $(a,b)$ be a Morley sequence in $p$ over (any small)   $A$. By
left invariance \,$g=a^{-1}\cdot b\models p\,|\,A$. By total
indiscernibility we have $\tp(a,b)=\tp(b,a)$, so
\,$g^{-1}=b^{-1}\cdot a\models p\,|\,A$. This proves that
$p=p^{-1}$. Now, for any $g\in G$ we have \
\begin{center}
$p=g^{-1}\cdot p=(g^{-1}\cdot p)^{-1}=p^{-1}\cdot g=p\cdot
g$ , \end{center} so $p$ is  also right-invariant.

For the uniqueness, suppose that $q$ is a left invariant global
type and we prove that $p=q$. Let    $\phi(x)\in q$ be over $A$,
let  $I=(a_i\,:\,i\in\omega)$ be a Morley sequence in $p$ over
$A$, and let $b\models q\,|\,(A,I)$.  Then, by left invariance of
$q$, for all $i\in\omega$ we have $\models\phi(a_i\cdot b)$. By
Proposition \ref{GS}(i) we get $\phi(x\cdot b)\in p(x)$ and, by
right invariance, $\phi(x)\in p(x)$. Thus $p=q$.
\end{proof}

\section{Global regularity}\label{s3}

For stable $T$, a stationary type $p(x)\in S(A)$ is said to be
regular if for any $B\supseteq A$, and $b$ realizing a forking
extension of $p$ over $B$, $p\,|\,B$ (the unique nonforking
extension of $p$ over $B$) has a unique complete extension over
$(B,b)$. Appropriate versions of regularity (where we do not have
stationarity) have been given  for simple theories for example.
Here we  give somewhat different versions for global ``invariant"
types in arbitrary theories. So the reader should be aware that our definitions
below are not consistent with usual terminology for {\em simple theories},
although they are of course consistent with the stable case.

\begin{defn}\label{Dreg}
Let $p(\bar{x})$ be  a global non-algebraic  type.

\begin{enumerate}
\item[(i)]   $p(\bar{x})$ is said to be \emph{regular} if, for
some small $A$, it is $A$-invariant  and for any $B\supseteq A$
and $\bar{a}\models p\,|\,A$: \ either $\bar{a}\models p\,|\,B$ \
or \ $p\,|\,B\vdash p\,|\,B\bar{a}$.

\item[(ii)]    Suppose  $\phi(\bar{x})\in p$. We say that
$(p(\bar{x}),\phi(\bar{x}))$ is \emph{strongly regular} if, for
some small $A$ over which $\phi$ is defined, $p$ is $A$-invariant
and  for all $B\supseteq A$ and $\bar{a}$ satisfying
$\phi(\bar{x})$, either \ $\bar{a}\models p\,|\,B$ \  or \
$p\,|\,B\vdash p\,|\,B\bar{a}$.\end{enumerate}
\end{defn}

\begin{rmk}\label{Rreg}
If   $p$ is  regular then    as a witness-set $A$ in
the definition we can take any small $A$ such that $p$ is
$A$-invariant. Similarly for strongly regular  types. Also note that strongly
regular implies regular.
\end{rmk}

\begin{defn}\label{D4} Let  $N$ be any submodel  of $\bar{M}$ (possibly $N=\bar{M}$),
and let  $p(x)\in S_1(N)$. The operator $\cl_p$ is defined on
(all) subsets of $N$ by:   \begin{center} $\cl_p(X)=\{a\in N\,|\,
a\nmodels p\,|\,X\}$.\end{center} Also define \
$\cl^A_p(X)=\cl_p(X\cup A)$ \ for any $A\subset N$ (and
$X\subseteq N$).
\end{defn}

Intuitively, having fixed $p(x)\in S(N)$ and $B\subset N$ then
realizations of $p\,|B$ are considered as `generic over $B$' so
formulas in $p(x)$ should be considered as defining `large'
subsets of $N$ and their negations as defining `small' subsets. In
this way, $\cl_p(B)$ is the union of all `small' definable
subsets. In the rest of this section we will be interested in the
case where $N = \bar M$. But later in the paper we will consider
other $N$.

\begin{rmk}\label{R22} Let $p(x)\in S_1(\bar{M})$  be    $A$-invariant,
where $A$ is small.

\smallskip
(i) \ Strong   regularity of $(p(x),x=x)$   translates into a
simpler  expression  using $\cl_p$\,:
\begin{center}
 $(p(x),x=x)$ is strongly regular   \  iff \
$p\,|\,B\vdash p\,|\,\cl_p(B)$ \ for any small $B\supseteq
A$.\end{center}Since $a\models p\,|\,B$ is the same as
$a\notin\cl_p(B)$, another equivalent way of expressing strong
regularity of $(p(x),x=x)$ is:
\begin{center}
$a\models p|B$ \  iff \ $a\models p\,|\,\cl_p(B)$ \ \ (for all
$a\in\bar{M}$ and small $B\supseteq A$).\end{center}

\smallskip
(ii) \ The corresponding expression is not so concise when $(p(x),\phi(x))$ is
strongly regular or when $p$ is just regular. For
example:
\begin{center} $p$ is
regular \  iff \  $p\,|\,AB\vdash p\,|\,A\cup
B\cup(\cl^A_p(B)\cap (p|A)(\bar{M}))$  for any small
$B$.\end{center}
We can also consider the restriction of $\cl^A_p$ to
$A\cup (p|A)(\bar{M})$. Formally we define (for $B$ a small subset of
$A\cup (p|A)(\bar{M})$)
\ $\cl_{p_A}(B)= A\cup (\cl^A_p(B)\cap
(p|A)(\bar{M}))$. Then we have a simple consequence of regularity;
$p$ is regular \ {\em implies} \
\begin{center}
$p\,|\,AB\vdash p\,|\,\cl_{p_A}(B)$ \ for any small $B\subset
(p|A)(\bar{M})$.
\end{center}
As in (i) this is translated to: if $p$ is regular then
\begin{center}
$a\models p\,|\,AB$ \ iff \ $a\models p\,|\,\cl_{p_A}(B)$ \ \ (for
any $a\in\bar{M}$ and small $B\subset (p|A)(\bar{M})$).
\end{center}

\smallskip (iii) \  Note that in (i) and (ii)  we took
into account only small $B$'s, while Definition \ref{Dreg}
mentions all $B$'s. But this does not matter, since in all of them
the statements with and without `small' are easily seen to be
equivalent.
\end{rmk}

\begin{lem}\label{Lsym}  Suppose that $A$ is small and   $p\in S_1(\bar{M})$ is
$A$-invariant.
\begin{enumerate}
\item[(i)]    $(p(x),x=x)$ is strongly regular \ iff \ $\cl^A_p$
is a closure operator on $\bar{M}$.

\item[(ii)]    Suppose that $(p(x),x=x)$ is strongly regular. Then
$\cl^A_p$ is a pregeometry operator on $\bar{M}$ iff every Morley
sequence in $p$ over $A$ is totally indiscernible.

\item[(iii)]   Suppose that  $p$ is regular.   Then $\cl_{p_A}$ is
a closure operator on $A\cup (p|A)(\bar{M})$; it is a pregeometry
operator  iff every Morley sequence in $p$ over $A$ is totally
indiscernible.
\end{enumerate}
\end{lem}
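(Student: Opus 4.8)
The three parts all proceed by translating the closure/pregeometry axioms, one at a time, into statements about $p$ over $\bar{M}$; the only point where something beyond unwinding definitions is needed is the passage from ``length-$2$ Morley symmetry'' to ``total indiscernibility of all Morley sequences'', for which I would invoke the material of Section~\ref{s1}. For part~(i), I would first observe that, for any non-algebraic $A$-invariant $p$, the operator $\cl^A_p$ automatically satisfies monotonicity (use non-algebraicity of $p$ for $X\subseteq\cl^A_p(X)$, and the obvious inclusion for $X\subseteq Y$) and finite character (a formula witnessing $a\in\cl^A_p(X)$ uses only finitely many parameters from $X$, together with $A$). So $\cl^A_p$ being a closure operator reduces to transitivity, $\cl^A_p(\cl^A_p(X))\subseteq\cl^A_p(X)$; writing $Y=\cl^A_p(X)$ and noting $A\subseteq Y$, this inclusion unwinds to the statement that $a\models p|B$ implies $a\models p|\cl_p(B)$ for every $B=X\cup A\supseteq A$, which is exactly the reformulation of strong regularity of $(p(x),x=x)$ in Remark~\ref{R22}(i). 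Both directions of (i) are then immediate (invoking Remark~\ref{R22}(iii) to pass between ``small'' and arbitrary sets).

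For part~(ii), by (i) the operator $\cl^A_p$ is already a closure operator, so being a pregeometry is equivalent to exchange, and I must match exchange with total indiscernibility of Morley sequences in $p$ over $A$. For ``totally indiscernible $\Rightarrow$ exchange'': after reducing, via finite character and monotonicity, to the case $X$ finite (so $C:=X\cup A$ is small), suppose $a\in\cl^A_p(X\cup\{b\})\setminus\cl^A_p(X)$ but $b\notin\cl^A_p(X\cup\{a\})$; this says $a\models p|C$, $b\models p|Ca$, and $a\not\models p|Cb$, so $(a,b)$ is a Morley sequence in $p$ over $C$. Since (by Section~\ref{s1}) total indiscernibility of Morley sequences of $p$ does not depend on the base, $(a,b)$ and $(b,a)$ have the same type over $C$; choosing $\sigma\in\Aut(\bar{M}/C)$ interchanging $a$ and $b$ and using $\sigma(p)=p$ (as $C\supseteq A$) transports $a\not\models p|Cb$ to $b\not\models p|Ca$, a contradiction. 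For ``exchange $\Rightarrow$ totally indiscernible'': if $(a_1,a_2)$ is a Morley sequence in $p$ over $A$ then $a_1\notin\cl^A_p(\emptyset)$ and $a_2\notin\cl^A_p(\{a_1\})$, so were $a_1\notin\cl^A_p(\{a_2\})$ to fail, exchange would give $a_2\in\cl^A_p(\{a_1\})$, a contradiction; hence $a_1\models p|Aa_2$, so $(a_2,a_1)$ is also a Morley sequence in $p$ over $A$ and $\tp(a_1a_2/A)=\tp(a_2a_1/A)$. The discussion of Section~\ref{s1} (the case $n=2$ suffices for symmetry) then yields total indiscernibility of all Morley sequences of $p$ over $A$. (Alternatively one can argue directly: a localization of a pregeometry is a pregeometry, so $\cl^{A'}_p$ is a pregeometry for every small $A'\supseteq A$, and applying the length-$2$ argument with $A'=A\cup\{a_0,\dots,a_{k-1}\}$ lets one interchange adjacent entries of an arbitrary Morley sequence.)

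For part~(iii) I would first record two reductions for $\cl_{p_A}$: elements of $A$ lie in $\cl_{p_A}(\emptyset)$, and replacing $B$ by $B\cap(p|A)(\bar{M})$ changes neither $\cl^A_p(B)$ nor $\cl_{p_A}(B)$; so in checking the axioms one may assume $B\subseteq(p|A)(\bar{M})$. As in (i), monotonicity and finite character of $\cl_{p_A}$ are automatic, and transitivity $\cl_{p_A}(\cl_{p_A}(B))\subseteq\cl_{p_A}(B)$ unwinds (again using $A\subseteq\cl_{p_A}(B)$) to the statement that $a\models p|AB$ implies $a\models p|\cl_{p_A}(B)$ for small $B\subseteq(p|A)(\bar{M})$, which is precisely the consequence of regularity recorded in Remark~\ref{R22}(ii); so $\cl_{p_A}$ is a closure operator. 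The pregeometry equivalence is then proved exactly as in (ii): for exchange one disposes of the cases $a\in A$ (the hypothesis fails) and $b\in A$ (the conclusion holds trivially), and otherwise $a,b\models p|A$ and the situation again reduces to the ``commuting'' of a Morley pair $(a,b)$ in $p$ over $C:=A\cup B$, handled by the same $\sigma\in\Aut(\bar{M}/C)$ argument; conversely exchange forces $a_1\models p|Aa_2$ for a Morley pair over $A$, hence $\tp(a_1a_2/A)=\tp(a_2a_1/A)$, hence (by Section~\ref{s1}) total indiscernibility of all Morley sequences of $p$ over $A$.

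The one genuinely non-formal ingredient, and the main thing to be careful about, is the fact from Section~\ref{s1} that total indiscernibility of Morley sequences of $p$ over a base $A$ is equivalent to symmetry of $p$, and is therefore base-independent: I use its form ``$n=2$ implies all $n$'' in the two ``$\Rightarrow$'' directions, and its stability under enlarging the base from $A$ to $C=X\cup A$ (resp.\ $A\cup B$) in the two ``$\Leftarrow$'' directions. Everything else is careful but routine unwinding of the definitions of $\cl^A_p$, $\cl_{p_A}$ and of (strong) regularity, together with the bookkeeping reductions to finite $X$ and to $B\subseteq(p|A)(\bar{M})$.
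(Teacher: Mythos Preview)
Your proof is correct and close in spirit to the paper's, but there is one tactical difference worth noting in part~(ii). For the direction ``total indiscernibility $\Rightarrow$ exchange'', the paper does not argue by contradiction with an automorphism over $C=X\cup A$; instead it first replaces the finite set $B\supseteq A$ by a maximal Morley sequence $(a_1,\dots,a_n)$ inside $B$ (so that $\cl^A_p(B)=\cl^A_p(a_1,\dots,a_n)$), and then runs the chain of equivalences
\[
b\notin\cl^A_p(Ba)\ \Longleftrightarrow\ (a_1,\dots,a_n,a,b)\text{ is Morley over }A\ \Longleftrightarrow\ (a_1,\dots,a_n,b,a)\text{ is Morley over }A\ \Longleftrightarrow\ \bigl(b\notin\cl^A_p(B)\text{ and }a\notin\cl^A_p(Bb)\bigr).
\]
This has the mild advantage that the indiscernibility hypothesis is invoked directly for Morley sequences over $A$, so no appeal to base-independence of symmetry is needed; your route trades this for a shorter argument that leans on the Section~\ref{s1} fact that symmetry of $p$ is witnessed over any base. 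You also write out the converse direction (exchange $\Rightarrow$ total indiscernibility) explicitly via the $n=2$ criterion, whereas the paper's printed proof only spells out the forward direction; your treatment here is strictly more complete. Parts~(i) and~(iii) match the paper's argument essentially verbatim.
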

\begin{proof}
(i) \ Assume that $(p(x),x=x)$ is strongly regular. Then for any
small $B\supseteq A$ we have:
\begin{center} $a\notin\cl_p(B)$ \ iff  \ $a\models p\,|\,B$ \
iff \     $a\models p\,|\,\cl_p(B)$ \  iff \
$a\notin\cl_p(\cl_p(B))$.\end{center}The first and the last
equivalence follow from the definition of $\cl_p$  and the middle
one is by Remark \ref{R22}(i). Thus \ $\cl_p(B)=\cl_p(\cl_p(B)$ \
so $\cl^A_p$ is a closure operator on $\bar{M}$. The other
direction is similar.

\smallskip (ii) \ Suppose that $(p(x),x=x)$ is strongly regular and that
every   Morley sequence in $p$ over $A$ is totally indiscernible.
To show that $\cl^A_p$ is a pregeometry operator, by part (i), it
suffices to verify the exchange property over finite extensions
$B\supset A$. Let $(a_1,...,a_n)\in B^n$ be a maximal Morley
sequence in $p$ over $A$; note that it is finite since it is in
$B\setminus A$, which is finite. Then
$B\subseteq\cl^A_p(a_1,..,a_n)$ (otherwise any element in the
difference would contradict the maximality) and, since $\cl^A_p$
is a closure operator, we get $\cl^A_p(B)\subseteq
\cl^A_p(a_1,..,a_n)$ and thus $\cl^A_p(B)= \cl^A_p(a_1,..,a_n)$.

To verify exchange,  let $a\models p\,|\,B$ (so $a\notin
\cl^{A}_{p}(B)$) and let $b\in \bar{M}$. Note that
$(a_1,...,a_n,a)$ is a Morley sequence   in $p$ over $A$  and
$\cl^A_p(Ba)=\cl^A_p(a_1,...,a_n,a)$. We have:
\begin{center} $b\notin\cl^A_p(Ba)$ \ iff \ $b\notin \cl^A_p(a_1,...,a_n,a)$
\ iff \  $(a_1,...,a_n,a,b)$ is a Morley sequence   \ iff \
$(a_1,...,a_n,b,a)$ is a Morley sequence   \ iff \
($b\notin\cl^A_p(B)$ and $a\notin\cl^A_p(Bb)$).\end{center}
In particular \  $b\notin\cl^A_p(Ba)$ \ implies   \ $a\notin\cl^A_p(Bb)$. This proves exchange.

\smallskip
(iii) \  This is similar to (i) and (ii). Suppose that $p$ is
regular, let $B\subset (p|A)(\bar{M})$ be small and let
$a\models p|\,A$. Then:
\begin{center} $a\notin\cl_{p_A}(B)$ \ iff  \ $a\models p\,|\,AB$ \
iff \     $a\models p\,|\,\cl_{p_A}(B)$ \  iff \
$a\notin\cl_{p_A}(\cl_{p_A}(B))$;\end{center} the first
equivalence and the last equivalence follow  from the definition
of $\cl_{p_A}$ and the middle one is by Remark \ref{R22}(ii). Thus
$\cl_p(B)=\cl_p(\cl_p(B))$. The other clause is proved as in part
(ii).
\end{proof}

\begin{thm}\label{Pr1}
Suppose that  $A$ is small and $p(x)\in S_1(\bar{M})$ is
$A$-invariant and regular.

\begin{enumerate}
  \item[(i)] If $p$ is symmetric,then $\cl_{p_A}$ is a
pregeometry operator on $A\cup (p\,|\,A)(\bar{M})$, In the case where a
$(p(x),x=x)$ is strongly regular, $\cl^A_p$ is a pregeometry
operator on $\bar{M}$.

\item[(ii)] If $p$ is not symmetric, then there exists  a finite
extension $A_0$ of $A$ and a  $A_0$-definable  partial order
$\leq$ such that every Morley sequence in $p$ over $A_0$ is
strictly increasing.
\end{enumerate}
\end{thm}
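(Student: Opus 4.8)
The plan is to prove the dichotomy by extracting a definable partial order directly from the failure of symmetry. Since $p$ is symmetric iff the $n=2$ case holds, failure of symmetry gives a formula $\phi(x_1,x_2)$ over $\bar M$ and a choice of the asymmetry: $\phi(x_1,x_2)\in p^{(2)}$ but $\phi(x_2,x_1)\notin p^{(2)}$. By $A$-invariance we may take the parameters of $\phi$ to be a realization of $p^{(k)}|A$ for some $k$, i.e. part of a Morley sequence; absorbing these into a finite extension $A_0\supseteq A$, we get an $A_0$-formula $\phi(x,y)$ with $\phi(\bar a_1,\bar a_2)$ holding and $\phi(\bar a_2,\bar a_1)$ failing for $(\bar a_1,\bar a_2)$ a Morley sequence in $p$ over $A_0$. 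First I would record the basic ``rigidity'' consequence of regularity: for $a,b$ both realizing $p|A_0$, if $b\not\models p|A_0 a$ then $p|A_0 a \vdash p|A_0 ab$ — so the type of the pair is forced once we know they are ``dependent,'' and in particular whether or not $\phi(a,b)$ holds is determined by $\tp(a/A_0)$, $\tp(b/A_0)$ and the single bit ``$a\models p|A_0 b$ (equivalently $b\models p|A_0 a$, using symmetry of nonforking on $p$... which we do \emph{not} have, so I must be careful here).''

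The key step is to define, for $a,b$ realizing $\phi$ (or realizing $p|A_0$ in the non-strongly-regular case),
\[
a\le b \quad\Longleftrightarrow\quad \phi(a,b)\ \wedge\ \neg\phi(b,a),
\]
or rather the reflexive-transitive closure of a relation of this shape, and to verify it is a partial order. Irreflexivity on realizations of $p|A_0$ is built in ($\neg\phi(a,a)$ would be forced, since $\phi(\bar a_1,\bar a_1)\in p$ would contradict $\phi(\bar a_2,\bar a_1)\notin p^{(2)}$ together with $A_0$-invariance — here one uses that $\bar a_1,\bar a_2$ have the same type over $A_0$). The real content is \textbf{transitivity and totality} along Morley sequences: given a Morley sequence $(\bar a_1,\bar a_2,\bar a_3)$ in $p$ over $A_0$, I need $\phi(\bar a_1,\bar a_3)\wedge\neg\phi(\bar a_3,\bar a_1)$, i.e. the relation is ``transitive on generic triples.'' This is where regularity does the work: by Lemma \ref{Lsym}(iii)/Remark \ref{R22}(ii), once the Morley sequence is laid down, the type of any finite sub-tuple over $A_0$ together with the knowledge of pairwise dependence is rigid, so $\phi$-relations among $\bar a_1,\bar a_2,\bar a_3$ are completely determined by the two-element pattern; and the two-element pattern is the asymmetric one by construction. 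Then I would promote ``transitive on Morley sequences'' to an honest transitive relation by the usual trick of replacing $\phi(x,y)$ with the formula $\theta(x,y)$ asserting ``$\phi(x,y)$ holds and for no $z$ realizing the appropriate generic type is the pattern violated,'' or more simply by passing to the transitive closure within the definable set and checking it stays irreflexive using regularity; finally set $\le$ to be $\theta$ together with equality and verify antisymmetry the same way.

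The main obstacle I anticipate is the absence of symmetry of nonforking itself: I cannot assume that $a\models p|A_0 b$ iff $b\models p|A_0 a$, so the ``single bit'' governing the type of a pair is not obviously symmetric, and the rigidity statements coming from regularity must be applied in the correct direction each time. In particular, to get a \emph{partial order} (transitive, antisymmetric) rather than merely an asymmetric relation on Morley sequences, I will likely need to close up under transitivity and then argue that a putative cycle $a_1 \le a_2 \le \cdots \le a_n \le a_1$ would force, via repeated use of regularity and indiscernibility, a Morley sequence realizing the symmetric two-element pattern, contradicting the choice of $\phi$. Handling the strongly-regular case ($(p(x),x=x)$, domain all of $\bar M$) versus the merely-regular case (domain $A\cup(p|A)(\bar M)$) is a bookkeeping matter: in the latter the order is only defined on realizations of $p|A$, which is exactly what the statement asks for, so no extra difficulty beyond keeping track of which set the formula is being evaluated on.
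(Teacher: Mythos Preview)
Your setup for (ii) is sound --- extracting an asymmetric $\phi(x,y)$ over a finite extension $A_0$ and noting $\models\phi(a,b)\wedge\neg\phi(b,a)$ for a length-two Morley sequence $(a,b)$ --- but the step from here to a \emph{definable partial order} has a real gap. Neither of your proposals gives a first-order formula: the transitive closure of $\phi(x,y)\wedge\neg\phi(y,x)$ is generally not definable, and your formula $\theta(x,y)$ quantifies over ``$z$ realizing the appropriate generic type,'' which is a type-definable, not definable, condition. Knowing that Morley triples are increasing (which is just indiscernibility) does not by itself upgrade an asymmetric relation to a transitive one on the whole structure, and your proposed cycle argument is vague precisely because the elements of a putative cycle are not a Morley sequence, so regularity gives no direct purchase.

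The paper closes this gap with a compactness trick that makes transitivity automatic. Using regularity one shows that $(p|A_0)(t)\cup\{\phi(t,a),\neg\phi(t,b)\}$ is inconsistent: if $d\models p|A_0$ satisfies $\phi(d,a)\wedge\neg\phi(d,b)$, then $d\not\models p|(A_0,a)$ (since $\phi(x,a)\notin p$), so by regularity $b\models p|(A_0,a,d)$, whence $b\models p|(A_0,d)$; but by invariance $\phi(d,x)\in p$, so $\models\phi(d,b)$, a contradiction. Compactness then yields a single $\theta(t)\in p|A_0$ with $\models(\forall t)(\phi(t,a)\wedge\theta(t)\rightarrow\phi(t,b))$. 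Now define
\[
x\preccurlyeq y \ \Longleftrightarrow\ (\forall t)\bigl(\phi(t,x)\wedge\theta(t)\rightarrow\phi(t,y)\bigr),
\]
which is \emph{automatically} a quasi-order (transitivity is syntactic), $A_0$-definable, and satisfies $a\preccurlyeq b$, $b\not\preccurlyeq a$. The missing idea in your proposal is exactly this: replace the type-definable ``$z$ is generic'' by a single formula $\theta$ via compactness, and build the order as containment of $\phi$-sets restricted to $\theta$, so that transitivity comes for free rather than needing to be verified.
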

\begin{proof}
If $p$ is symmetric then  every Morley sequence in $p$ over $A$ is
totally indiscernible and  (i) follows from Lemma \ref{Lsym}.

Now suppose that $p$ is not symmetric. Then,  as mentioned in the
introduction,  there is a finite extension $A_0\supset A$ and a
Morley sequence $(a,b)$ of length $2$ in $p$ over $A_0$ such that
$\tp(a,b/A_0)\neq \tp(b,a/A_0)$. \  Choose
$\phi(x,y)\in\tp(a,b/A_0)$ witnessing the ``asymmetry": \
$\models\phi(x,y)\rightarrow\neg\phi(y,x)$. \ Then \
$\phi(a,x)\wedge\neg\phi(x,a)\in \tp(b/aA_0)\subset p(x))$  so
$\phi(x,a)\notin p(x)$. We claim that
$$(p\,|\,A_0)(t)\cup\{\phi(t,a)\}\cup\{\neg\phi(t,b)\}$$
is inconsistent. Otherwise, there is $d$ realizing $p\,|\,A_0$
such that \ $\models\phi(d,a)\wedge \neg\phi(d,b)$. Then $d$ does
not realize $p\,|\,(A_0,a)$ (witnessed by $\phi(x,a)$)  so, by
regularity, $p\,|\,(A_0,a)\vdash p\,|\,(A_0,a,d)$  and thus
$b\models p\,|\,(A_0,a,d)$. In particular $b\models p\,|\,(A_0,d)$
and since, by invariance, $\phi(d,x)\in p(x)$   we conclude
$\models\phi(d,b)$. A contradiction.

\smallskip By this claim, and compactness,  we find $\theta(t)\in
p\,|\,A_0 $ such that \begin{center} $\models(\forall
t)(\phi(t,a)\wedge\theta(t)\rightarrow \phi(t,b)).$\end{center}Let
\ $x\lqq y$ \ be \ $(\forall
t)(\phi(t,x)\wedge\theta(t)\rightarrow \phi(t,y)).$ \ Clearly,
$\lqq$ defines a quasi order and  $a\lqq b$. Also:
\begin{center}
$\models \phi(a,b)\wedge
\theta(a)\wedge\neg\phi(a,a)$;\end{center} The first conjunct
follows by our choice of $\phi$,    the second from $a\models
p\,|\,A_0$, and the third from the asymmetry of $\phi$. Altogether
they imply $b\nlqq a$. Thus if $x<y$ is $x\lqq y\wedge y\nlqq x$
we have $a<b$.
\end{proof}

The next examples concern issues of whether symmetric regular
types are definable or even generically stable. But we first give
a case where this is true (although it depends formally on Theorem
\ref{Tpr} of the next section).

\begin{cor}\label{C1}
Suppose that $(p(x),x=x)$ is strongly regular and
symmetric. Then $p(x)$ is generically stable.
\end{cor}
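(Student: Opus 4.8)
The plan is to deduce generic stability of $p$ from the dichotomy in Theorem \ref{Pr1} together with the hypotheses. Since $(p(x),x=x)$ is strongly regular, it is in particular regular, and by assumption it is symmetric; symmetric is the negation of the ``not symmetric'' alternative in Theorem \ref{Pr1}(ii). So I would like to conclude that we are automatically in the generically stable case. But Theorem \ref{Pr1} as stated only gives us that $\cl^A_p$ is a pregeometry operator when $p$ is symmetric — it does not literally say $p$ is generically stable. This is why the corollary is flagged as depending on Theorem \ref{Tpr} of the next section, which (according to the introduction) identifies ``infinite-dimensional homogeneous pregeometries'' with generically stable strongly regular types $(p(x),x=x)$, and in particular should say that a strongly regular symmetric $(p(x),x=x)$ whose associated closure operation is a pregeometry is generically stable.

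Concretely, the steps I would carry out are as follows. First, recall from Remark \ref{Rreg} that any small $A$ with $p$ being $A$-invariant serves as a witness set for strong regularity; fix such an $A$. Second, invoke symmetry: by the discussion in the introduction, $p$ symmetric means that every Morley sequence in $p$ over $A$ is totally indiscernible. Third, apply Lemma \ref{Lsym}(ii): since $(p(x),x=x)$ is strongly regular and every Morley sequence in $p$ over $A$ is totally indiscernible, $\cl^A_p$ is a pregeometry operator on $\bar M$. Fourth — and this is the step that formally reaches outside the excerpt — appeal to Theorem \ref{Tpr}, which tells us that such a pregeometry $(\bar M, \cl^A_p)$ arising from a strongly regular type is the ``generic type'' situation, and in particular that $p$ is generically stable (this is precisely the content that the introduction advertises: generically stable strongly regular $(p(x),x=x)$ coincide with infinite-dimensional homogeneous pregeometries, and the monster model furnishes the infinite-dimensional homogeneity for free).

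The main obstacle — really the only nontrivial point — is the last step: one must verify that the pregeometry structure, plus strong regularity, plus the homogeneity and infinite-dimensionality automatically present because we are working over the monster model $\bar M$, actually yields generic stability, which is the harder implication in the equivalence of Theorem \ref{Tpr}. Everything before that is bookkeeping: matching the hypothesis ``symmetric'' to ``all Morley sequences totally indiscernible'' and feeding this into Lemma \ref{Lsym}(ii). One should be slightly careful that Theorem \ref{Tpr} is stated for the right object — namely the operator $\cl^A_p$ on all of $\bar M$ rather than the restricted operator $\cl_{p_A}$ on $A\cup (p|A)(\bar M)$ — which is exactly why the hypothesis of the corollary is the strong regularity of $(p(x),x=x)$ and not merely regularity of $p$; in the merely regular case one only gets a pregeometry on the smaller set and the clean correspondence with generic stability need not go through.
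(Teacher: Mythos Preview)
Your proposal is correct and follows essentially the same route as the paper's proof: the paper cites Theorem~\ref{Pr1}(i) (which is itself just an application of Lemma~\ref{Lsym}) to get that $\cl^A_p$ is a pregeometry on $\bar M$, and then invokes Theorem~\ref{Tpr}(ii) to conclude generic stability. Your unpacking via Lemma~\ref{Lsym}(ii) directly, together with the observation that homogeneity and infinite-dimensionality are automatic over $\bar M$ (so that Theorem~\ref{Tpr}(ii) applies and returns the original $p$), is exactly the content of that two-line argument.
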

\begin{proof}
If $(p(x),x=x)$ is invariant-strongly regular then, by Theorem
\ref{Pr1}(i) $\cl_p$ is a pregeometry operator on $\bar{M}$, and
then $p(x)$ is generically stable by Theorem \ref{Tpr}(ii).
\end{proof}

\begin{exm}\label{E1}  A symmetric,  definable, strongly regular type
which is  not generically stable: \ Let $L=\{U,V,E\}$ where $U,V$
are   unary   and $E$ is a binary predicate. Consider the
bipartite graph $(M,U^M,V^M,E^M)$ where $U^M=\omega$, $V^M$ is the
set of all finite subsets of $\omega$, $M=U^M\cup V^M$,  and
$E^M=\{(u,v)\,:\,u\in U^M, \ v\in V^M, \ \textmd{and} \ u\in v\}$.
Let $A\subset M$ be finite. Then:
\begin{center} If \ $(c_1,...,c_n)\,,(d_1,...,d_n)\in (U^M)^n$  \
have the same quantifier-free type over $A$ then
$\tp(c_1,...,c_n/A)=\tp(d_1,...,d_n/A)$,\end{center}since the
involution of $\omega$ mapping $c_i$'s to $d_i$'s respectively,
and fixing all the other elements of $\omega$ is an
$A$-automorphism of $M$.  Note that this is expressible by a set
of first-order sentences, so is true in the monster.

Further, if $e_1,...,e_n\in U^M$ are distinct and have the same
type over $A$   then, since every permutation of $\omega$ which
permutes  $\{e_1,...,e_n\}$ and fixes all the other elements of
$\omega$  is an $A$-automorphism of $M$, $(e_1,...,e_n)$ is
totally indiscernible over $A$. This is also expressible by a set
of first-order sentences.

Let   $p(x)\in S_1(M)$ be the type of a ``new" element of $U$ which
does not belong to any element of $V^M$. Then, by the above, $p$
is definable,   its global heir $\bar{p}$ is symmetric, and
$(\bar{p}(x),U(x))$ is strongly regular.
\end{exm}

\begin{exm}\label{E2}
A symmetric, strongly regular type  which is not definable: \
Consider the bipartite graph $(M,U^M,V^M,E^M)$ where $U^M=\omega$,
$V^M$ consists of all finite and co-finite subsets of $\omega$,
$M=U^M\cup V^M$, and $E^M$ is  $\in$. \ Let $\bar{M}$ be the
monster and let $p(x)\in S_1(\bar{M})$ be the type of a  new
element of $U^{\bar{M}}$, which  belongs to all co-finite members
of $V^{\bar{M}}$ (and no others). Arguing as in the previous
example  $(p(x),U(x))$ is strongly regular and symmetric. Since
``being a co-finite subset of $U^M$\," is not definable, $p$ is
not definable.
\end{exm}

\begin{defn}\label{D5}
Let $G$ be a definable group in
$\bar{M}$. \ $G$ is called an \emph{regular group} if for
some global type $p(x)\in S_G(\bar{M})$, $(p(x),``x\in G")$ is
strongly regular (in particular invariant over some small set).
\end{defn}

We will see in Example \ref{field} that non symmetric
regular groups, and even fields, exist.

\begin{thm}\label{Tg}
Suppose that $G$ is a group definable over $\emptyset$, which is regular,
witnessed by $p(x)\in S_G(\bar{M})$. Then:

\begin{enumerate}
\item[(i)]   $p(x)$ is both left and right translation invariant
(and in fact invariant under definable bijections).

\item[(ii]    A formula $\phi(x)$ is in $p(x)$ \  iff \ two left
(right) translates of $\phi(x)$ cover $G$ \ iff \ finitely many
left (right) translates of $\phi(x)$ cover $G$. (Hence $p(x)$ is
the unique ``generic type" of $G$.)

\item[(iii)]   $p(x)$ is definable over $\emptyset$.

\item[(iv)]   $G=G^0$ \ (i.e. $G$ is connected).
\end{enumerate}
\end{thm}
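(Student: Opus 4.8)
The plan is to run the argument in the order (i) $\to$ (iv) $\to$ (ii) $\to$ (iii), which is the natural dependency chain. For (i), fix a small $A$ over which $p$ is invariant (with $G$ defined over $\emptyset$, so $A$ can be taken $\emptyset$-based; strictly, $A$ is whatever small set invariance needs). Let $f$ be any $\bar M$-definable bijection of $G$ with itself. I would show $f_*p = p$ by the same device as in the NIP group case: take $a \models p\,|\,(A,\text{parameters of }f)$, and observe that $f(a)$ realizes some complete type $q$ over $\bar M$; the key point is that strong regularity of $(p,"x\in G")$ forces $q = p$. Concretely, if $f(a) \not\models p\,|\,B$ for some $B$ containing $A$ and the parameters of $f$, then since $f(a)$ still satisfies "$x \in G$" we can apply strong regularity at the pair $(p\,|\,B, f(a))$: either $f(a)\models p\,|\,B$ or $p\,|\,B \vdash p\,|\,(B,f(a))$. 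Pushing this back through $f^{-1}$ and using that $a \models p\,|\,B$, a dimension/independence count (in the style of the proof of Theorem \ref{Pr1} and Lemma \ref{Lsym}) shows $f(a) \models p\,|\,B$ after all, whence $f(a)\models p\,|\,\bar M$, i.e. $f_*p = p$. Left and right translations and inversion are special cases, giving (i).

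For (iv): suppose $H = G^0 \lneq G$ is a relatively definable subgroup of finite index $n > 1$. Each coset of $H$ is a translate of the definable set "$x \in H$", and by (i) all translates of a given definable set either all lie in $p$ or all lie outside $p$ — more precisely, $\phi(x) \in p \iff (g\cdot\phi)(x)\in p$ for every $g\in G$. Since $G = \bigcup_{i<n} g_iH$ is a finite partition into translates of the formula "$x\in H$", exactly one of these cosets, say the formula $\psi(x)$, lies in $p$ (a complete type over a model containing a finite partition picks out exactly one piece); but then every translate of $\psi$ lies in $p$, so all $n$ cosets lie in $p$, contradicting that $p$ is consistent (their pairwise conjunctions are inconsistent for $n\geq 2$). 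Hence $G = G^0$.

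For (ii): the implication "finitely many left translates of $\phi$ cover $G$" $\Rightarrow$ "two left translates cover $G$" is the standard connectedness trick — if $g_1\phi,\dots,g_k\phi$ cover $G$ then the formula $\theta(x) := \bigvee_{i\le k} \phi(g_i^{-1}x)$ has the property that $\theta$ and finitely many of its translates cover $G$ while $\theta \in p$ by (i); one then iterates Neumann-type covering arguments, using $G = G^0$ from (iv), to collapse the number of translates needed to $2$. For "$\phi \in p$" $\Rightarrow$ "two left translates of $\phi$ cover $G$": if no two translates of $\phi$ cover $G$, then for every $g,h\in G$ the formula $\neg\phi(g^{-1}x)\wedge\neg\phi(h^{-1}x)$ is consistent, and a compactness argument produces a type extending "$x\in G$" omitting all translates of $\phi$; using left-invariance of $p$ and the regularity hypothesis one derives that $\neg\phi(x)$ is consistent with $p\,|\,A$ realized generically, contradicting $\phi \in p$. (This is the step I expect to be most delicate: translating "$\phi \in p$" into a genuine covering statement really uses that $p$ is the \emph{unique} generic — so I would first establish uniqueness of the left-invariant generic type, exactly as in Lemma above the statement, and run the covering argument against that uniqueness.) Conversely if two translates of $\phi$ cover $G$ then $\neg\phi$ cannot be in $p$ (else two translates of $\neg\phi$, hence a union not covering $G$... here use invariance: $\neg\phi \in p$ would force, by (i), every translate of $\neg\phi$ to be in $p$, impossible since finitely many translates of $\phi$ cover $G$), so $\phi\in p$; this also yields uniqueness, giving the parenthetical remark. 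Finally (iii): by (ii), membership of $\phi(x,\bar b)$ in $p$ is equivalent to the assertion "two left translates of $\phi(x,\bar b)$ cover $G$", and "$\exists g_1 g_2\, \forall x (x\in G \to \phi(g_1^{-1}x,\bar b)\vee\phi(g_2^{-1}x,\bar b))$" is a first-order condition on $\bar b$ over $\emptyset$ (since $G$ is $\emptyset$-definable); hence the $\phi$-definition of $p$ is over $\emptyset$, and $p$ is definable over $\emptyset$.
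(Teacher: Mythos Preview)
Your overall structure is reasonable, but part (ii) has a genuine gap, and the detour through (iv) is unnecessary.

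For (i), you correctly set up the dichotomy from strong regularity, but ruling out $p|B\vdash p|(B,f(a))$ needs no ``dimension/independence count''. The point is immediate: $a$ realizes $p|B$, so if $p|B\vdash p|(B,f(a))$ then $a$ realizes $p|(B,f(a))$; but $a=f^{-1}(f(a))$ is definable over $(B,f(a))$ while $p$ is non-algebraic. That is the entire argument.

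For (ii), the paper's proof is direct and short and does \emph{not} pass through (iv), Neumann-type covering lemmas, or any prior uniqueness of the generic. Here is the idea you are missing. Let $\phi(x)\in p$ define $D$, over a small $A$, and pick $g\models p|A$. For any $b\in G\setminus D$, $b$ does not realize $p|A$ (since $\phi\in p|A$), so strong regularity of $(p,\text{``}x\in G\text{''})$ gives $g\models p|(A,b)$. By (i), $g^{-1}\models p|(A,b)$; the formula ``$x\in D\cdot b^{-1}$'' is over $(A,b)$ and lies in $p$ (again by (i)), so $g^{-1}\in D\cdot b^{-1}$, i.e.\ $b\in g\cdot D$. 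Hence $G=D\cup g\cdot D$: two translates already suffice. The converse (finitely many translates of $\phi$ cover $G$ implies $\phi\in p$) is immediate from (i), as you note.

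Your proposed route for ``$\phi\in p\Rightarrow$ two translates cover'' does not close as written: producing by compactness some type $q$ omitting all translates of $\phi$ yields no contradiction, since nothing forces $q=p$; and invoking uniqueness of the left-invariant generic is circular, as that uniqueness is exactly the parenthetical consequence of (ii) (the lemma you have in mind also assumes generic stability, which is not available here).

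Finally, (iii) and (iv) both drop out of (ii) at once: the two-translate covering condition is first-order in the parameters of $\phi$ over $\emptyset$, giving (iii); and any proper finite-index definable subgroup $H$ would have every coset satisfying the covering criterion, hence every coset in $p$, which is absurd, giving (iv). There is no need to establish (iv) before (ii).
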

\begin{proof}
(i) \ Suppose that $f:G\longrightarrow G$ is a $B$-definable
bijection and $a\models p\,|\,B$. Since $p\,|\,B\vdash
p\,|\,(B,f(a))$ is not possible, by strong regularity, we get
$f(a)\models p\,|\,B$. Thus $p$ is invariant under $f$.

\smallskip
(ii) \ Suppose that $D\subseteq G$ is defined by  $\phi(x)\in
p(x)$ which is over $A$. Let $g\models p\,|\,A$ and we show \
$G=D\cup g\cdot D$\,. If  $b\in G\setminus D$ then $b$ does not
realize $p\,|\,A$ so, by strong regularity, $g\models
p\,|\,(A,b)$. By (i) $g^{-1}\models p\,|\,(A,b)$, thus $g^{-1}\in
D\cdot b^{-1}$ and $b\in g\cdot D$. This proves \ $G=D\cup g\cdot
D$\,.

For the other direction, if finitely many translates of $\psi(x)$
cover $G$  then at least one of them belongs to $p(x)$  and, by
(i), $\psi(x)\in p(x)$.

\smallskip
(iii) and (iv) follow immediately from (ii).
\end{proof}

\begin{Qst} \ Is every regular group commutative?
\end{Qst}

\section{Homogeneous pregeometries}\label{s4}

If  $(M,\cl)$ is a pregeometry then, as usual, we obtain notions
of independence and dimension: for $A,B\subset M$ we say that $A$
is independent over $B$ if $a\notin\cl(A\setminus \{a\}\cup B)$
for all $a\in A$. Given $A$ and $B$, all subsets of $A$ which are
independent over $B$ and maximal such, have the same cardinality,
called $\dim(A/B)$. $(M,\cl)$ is infinite-dimensional if
$\dim(M/\emptyset)\geq\aleph_0$.

\begin{rmk}\label{R32}
(i)  \ If $\bar{c}$ is a tuple of length $n$ then
$\dim(\bar{c}/B)\leq n$ for any $B$.

\smallskip
(ii) \ If $l(\bar{c})=n$, $|A|\geq n+1$ and $A$ is independent
over $B$ then there is $a\in A$ such that
$a\notin\cl(B\cup\bar{c})$.
\end{rmk}

\begin{defn}\label{Dhom} We call an infinite-dimensional  pregeometry $(M,\cl)$
{\em homogeneous} if for any finite $B\subset M$, the set of
all $a\in M$ such that $a\notin \cl(B)$ is the set of realizations
in $M$ of a complete type $p_B(x)$ over $B$.
\end{defn}

Note that  Definition \ref{Dhom} relates in some way the closure
operation to the first-order structure. But  it does not
say anything about automorphisms, and nothing is being claimed
about the homogeneity or strong homogeneity of $M$ as a first-order structure.
In particular we do not want to assume $M$ to be a saturated structure.

\begin{lem}\label{L34}
Suppose $(M,\cl)$ is a  homogeneous pregeometry.

\begin{enumerate}
\item[(i)]   $p_{\cl}(x)=\bigcup\{p_B(x)\,:$ $B$ finite subset of
$M\}$ \ is a complete 1-type over $M$, which we call the generic
type of the pregeometry $(M,\cl)$.

\item[(ii)]   $a\notin\cl(B)$ \ iff \ $a\models p_{\cl}(x)\,|\,B$.
\ In particular \ $\cl=\cl_{p_{\cl}}$.

\item[(iii)]   $I=(a_i\,:\,i<\omega)$ is independent over $B$ \
iff \ $a_i\models p_{\cl}\,|\,(B,a_0,...,a_{i-1})$ for all $i$. \
In particular, if $M=\bar{M}$  and $p_{\cl}$ is $B$-invariant then
$I$ is independent over $B$ iff it is a Morley sequence in
$p_{\cl}$ over $B$.\end{enumerate}
\end{lem}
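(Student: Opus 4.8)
The plan is to prove the three equivalences in Lemma \ref{L34} by unwinding the definitions and leaning on the pregeometry axioms plus homogeneity.

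For part (i): I want to check that $p_{\cl}(x) = \bigcup\{p_B(x) : B \text{ finite}\}$ is a consistent complete type over $M$. Consistency and completeness both reduce to showing the family $\{p_B : B \text{ finite}\}$ is directed and coherent, i.e. $B \subseteq B'$ implies $p_{B'} \restriction B = p_B$. So fix finite $B \subseteq B'$. By infinite-dimensionality and finite character, $M \setminus \cl(B')$ is nonempty; pick $a \notin \cl(B')$. By monotonicity $\cl(B) \subseteq \cl(B')$, so $a \notin \cl(B)$ as well. Thus any realization of $p_{B'}$ is also a realization of $p_B$, and since $p_B$ is a complete type over $B$, we get $p_{B'} \vdash p_B$, i.e. $p_{B'} \restriction B = p_B$. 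Coherence gives that the union is finitely satisfiable (any finite subset lies in some $p_B$), hence consistent, and completeness over $M$ follows since every formula over $M$ has parameters in some finite $B$ and is decided by $p_B$.

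For part (ii): the direction $a \notin \cl(B) \Rightarrow a \models p_{\cl} \restriction B$ is immediate from the definition of $p_B$ for $B$ finite (using finite character: if $a \notin \cl(B)$ then $a \notin \cl(B_0)$ for every finite $B_0 \subseteq B$, so $a$ realizes $p_{B_0}$ for all such $B_0$, hence $a \models p_{\cl} \restriction B$). Conversely, if $a \models p_{\cl} \restriction B$ but $a \in \cl(B)$, then by finite character $a \in \cl(B_0)$ for some finite $B_0 \subseteq B$; but $a \models p_{B_0}$ says $a \notin \cl(B_0)$, a contradiction. The ``in particular'' $\cl = \cl_{p_{\cl}}$ is then just Definition \ref{D4} read against part (ii): $\cl_{p_{\cl}}(X) = \{a : a \not\models p_{\cl} \restriction X\} = \{a : a \in \cl(X)\}$ once we note finite character lets us pass from arbitrary $X$ to its finite subsets.

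For part (iii): by definition $I = (a_i : i < \omega)$ is independent over $B$ iff $a_i \notin \cl(\{a_j : j \neq i\} \cup B)$ for all $i$. The forward direction needs exchange: given independence of $I$, I must deduce $a_i \notin \cl(B \cup \{a_0,\dots,a_{i-1}\})$, i.e. independence of the initial segments ``from the left''; this is the standard fact that in a pregeometry, a set is independent over $B$ iff each element is outside the closure of the earlier ones together with $B$, proved by exchange and a small induction. The reverse direction is the easier half of the same equivalence. Once this is in place, $a_i \notin \cl(B \cup \{a_0,\dots,a_{i-1}\})$ translates via part (ii) to $a_i \models p_{\cl} \restriction (B, a_0,\dots,a_{i-1})$, which is exactly the statement. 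The final clause, that when $M = \bar{M}$ and $p_{\cl}$ is $B$-invariant an independent sequence is a Morley sequence, is immediate from the definition of Morley sequence recalled in the introduction: realizing $p_{\cl} \restriction (B, a_0,\dots,a_{i-1})$ at each stage is precisely the recursive characterization of a Morley sequence in $p_{\cl}$ over $B$.

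I do not expect a serious obstacle here; the only place requiring genuine (if routine) care is the exchange-based argument in part (iii) that ``pairwise-style'' independence of $I$ coincides with the ``left-to-right'' characterization, and the bookkeeping with finite character needed to reduce arbitrary parameter sets to finite ones throughout. Everything else is a direct unwinding of Definitions \ref{Dhom} and \ref{D4}.
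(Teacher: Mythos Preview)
Your proposal is correct and follows essentially the same route as the paper, which proves (i) by the same directedness/compactness observation (the union of finitely many $p_{A_i}$ sits inside $p_B$ for $B = A_1 \cup \dots \cup A_n$) and simply declares (ii) and (iii) ``easy''. One minor slip in your part (iii): you have the two directions reversed---deducing $a_i \notin \cl(B \cup \{a_0,\dots,a_{i-1}\})$ from full independence is the trivial direction (monotonicity, since $\{a_0,\dots,a_{i-1}\} \subseteq \{a_j : j \neq i\}$), while the converse is what actually requires exchange.
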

\begin{proof}
(i) \ Consistency is by compactness: given $A_1,...,A_n$ finite
subsets of $M$ and $B=A_1\cup...\cup A_n$, clearly
$p_{A_1}(x)\cup...\cup p_{A_n}(x)\subseteq p_B(x)$ and the later
is consistent. Completeness is clear.

(ii) and (iii) are easy.
\end{proof}

\begin{lem}\label{L35} Suppose   $(M,\cl)$ is a  homogeneous pregeometry.
Let $(a_i\,:\,i\in\omega)$ be an $\emptyset$-independent subset of
$M$. Then for any $L$-formula $\phi(x,\bar{y})$ with
$l(\bar{y})=n$, and $n$-tuple $\bar{b}$ from $M$:
\begin{center}
$\phi(x,\bar{b})\in p_{\cl}(x)$ \ \ \ iff  \ \ \ $\models
\wedge_{i\in w}\phi(a_i,b)$ \  for some \
$w\subset\{1,...2n\},\,|w|=n+1$.
\end{center}
In particular $p_{\cl}(x)$ is definable.
\end{lem}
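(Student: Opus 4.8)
The plan is to combine the total-indiscernibility machinery for generically stable types (Proposition \ref{GS}) with the characterization of forking inside a homogeneous pregeometry (Lemma \ref{L34}). First I would observe that, by Lemma \ref{L34}(ii)--(iii), the operator $\cl$ equals $\cl_{p_\cl}$, and an $\emptyset$-independent $\omega$-sequence $(a_i:i<\omega)$ is exactly a sequence with $a_i \models p_\cl \mid (a_0,\dots,a_{i-1})$ for all $i$. The delicate point is that we are working over the not-necessarily-saturated model $M$ rather than $\bar M$, so I cannot literally quote "Morley sequence in a $\emptyset$-invariant global type''. The first real step, then, is to pass to a global type: take the global heir (or any global nonforking extension) $\bar p$ of $p_\cl$ over $\bar M$, check it is invariant over some small set, and verify that our given $\emptyset$-independent sequence in $M$ is (an initial segment of) a Morley sequence in $\bar p$. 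Then I would need to know that $\bar p$ is generically stable — this should follow from the hypotheses via Theorem \ref{Tpr} (strong regularity of $(p_\cl,x=x)$ together with the pregeometry axioms, exactly as invoked in Corollary \ref{C1}), giving total indiscernibility of Morley sequences by Proposition \ref{GS}(iii) and the $n_\phi$-bound of Proposition \ref{GS}(i).

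Once generic stability of $\bar p$ is in hand, the combinatorial heart is the majority-vote formula. By Proposition \ref{GS}(i), for each $\phi(x,\bar y)$ with $l(\bar y)=n$ there is $n_\phi$ so that for any Morley sequence $(c_i:i<\omega)$ and any $\bar b$, $\phi(x,\bar b)\in \bar p$ iff at least $n_\phi+1$ of the first $2n_\phi+1$ terms satisfy $\phi(\cdot,\bar b)$. The key reduction is to show we may take $n_\phi = n = l(\bar y)$. This is where Remark \ref{R32} enters: if $(a_i:i<\omega)$ is $\emptyset$-independent and $\bar b$ is an $n$-tuple, then by Remark \ref{R32}(ii) at most $n$ of the $a_i$ lie in $\cl(\bar b)$; equivalently, at most $n$ of the $a_i$ fail to realize $p_\cl \mid \bar b$. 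Since $\phi(x,\bar b)\in p_\cl$ means $\phi$ holds of every realization of $p_\cl\mid\bar b$ while $\neg\phi(x,\bar b)\in p_\cl$ is the symmetric statement, exactly one of the two formulas $\phi(x,\bar b), \neg\phi(x,\bar b)$ lies in $p_\cl$, and all but at most $n$ of the $a_i$ satisfy whichever one it is. Hence among $a_1,\dots,a_{2n}$ (that is, $2n$ terms), at least $n$ satisfy the correct one; and if $\phi(x,\bar b)\in p_\cl$, strictly more than $n$ — i.e. some $(n+1)$-subset $w\subseteq\{1,\dots,2n\}$ — satisfy it. Conversely, if no $(n+1)$-subset satisfies $\phi$ then $\neg\phi(x,\bar b)$ is satisfied by more than $n-1$... one must argue the count forces $\neg\phi(x,\bar b)\in p_\cl$: if $\phi(x,\bar b)\notin p_\cl$ then $\neg\phi(x,\bar b)\in p_\cl$, so at most $n$ of the $a_i$ satisfy $\phi$, and conversely a witnessing $(n+1)$-subset would contradict this — establishing the "iff''.

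The last sentence, definability of $p_\cl$, then follows immediately: fixing the $\emptyset$-independent sequence $(a_i:i<\omega)$ (and, say, elements $a_1,\dots,a_{2n}$ of it), the condition "$\phi(x,\bar b)\in p_\cl(x)$'' is expressed by the formula $\bigvee_{w\subseteq\{1,\dots,2n\},\,|w|=n+1}\bigwedge_{i\in w}\phi(a_i,\bar y)$ evaluated at $\bar b$, which is a first-order formula over the parameters $a_1,\dots,a_{2n}\in M$; so for every $\phi$ the $\phi$-definition of $p_\cl$ is a formula over this fixed finite set. I expect the main obstacle to be the bookkeeping in the first paragraph — legitimately moving between the pregeometry structure on $M$ and Morley sequences of a global invariant type, and citing Theorem \ref{Tpr} to get generic stability of that global type — rather than the counting argument, which is clean once the $n_\phi = n$ bound is justified via Remark \ref{R32}.
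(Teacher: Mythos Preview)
Your proposal has a genuine circularity. You plan to invoke Theorem \ref{Tpr} (and implicitly Proposition \ref{Ppr}) to obtain a global heir $\bar p$ and its generic stability, and then use Proposition \ref{GS}(i) to get the majority-vote schema. But in the paper's logical order, Lemma \ref{L35} is an \emph{input} to Proposition \ref{Ppr}(iii) (that is where generic stability of $\bar p$ is first established), and Theorem \ref{Tpr} is just the summary of Proposition \ref{Ppr}. So citing Theorem \ref{Tpr} here would be using the lemma to prove itself. There is a second, smaller circularity in the same paragraph: speaking of ``the global heir'' presupposes definability of $p_{\cl}$, which is exactly the conclusion of the lemma.

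The good news is that your second paragraph already contains the entire paper's proof, and none of the global-type machinery is needed. The paper argues directly from Remark \ref{R32}(ii) and the definition of $p_{\cl}$: if $\phi(x,\bar b)\in p_{\cl}$ then any $a$ with $\models\neg\phi(a,\bar b)$ lies in $\cl(\bar b)$, so at most $n$ of the $a_i$ can fail $\phi$; and conversely, if at least $n+1$ of the $a_i$ satisfy $\phi(x,\bar b)$, then $\neg\phi(x,\bar b)$ cannot be in $p_{\cl}$ (same counting applied to $\neg\phi$), so $\phi(x,\bar b)\in p_{\cl}$. That's it --- no passage to $\bar M$, no Morley sequences, no Proposition \ref{GS}. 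What you framed as ``the key reduction $n_\phi=n$'' is in fact the whole argument; the preliminary step of producing some abstract $n_\phi$ via generic stability is superfluous (and, as noted, unavailable at this point). Drop the first paragraph, keep the counting, and you have the paper's proof. (Minor arithmetic: with $2n$ indices ``at most $n$ fail'' only gives ``at least $n$ succeed''; one needs $2n+1$ indices to guarantee $n+1$ successes, as the paper's own proof text uses --- the statement's ``$\{1,\dots,2n\}$'' is a harmless slip.)
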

\begin{proof}
If $\phi(x,\bar{b})$ is large (namely in $p_{cl}$), then its negation is small, and
thus if $M\models\neg\phi(a,\bar{b})$ then $a\in\cl(\bar{b})$. By
Remark \ref{R32}(ii), at most $n$ many $a_i$'s can satisfy
$\neg\phi(x,\bar{b})$, hence at least $n+1$ among the first $2n+1$
$a_i$'s satisfy $\phi(x,\bar{b})$. Conversely, if at least $n+1$
$a_i$'s satisfy $\phi(x,\bar{b})$ then, again by Remark
\ref{R32}(ii), $\phi(x,\bar{b})$ can not be small, so it is large.
\end{proof}

In the next Proposition we make use of Definition \ref{D4} from
the previous section.

\begin{prop}\label{Ppr}
Suppose $(M,\cl)$ is a  homogeneous pregeometry. Let $p(x)$ be the
generic type and let $\bar{p}(x)$ be its (unique by definability)
global heir.

\begin{enumerate}
\item[(i)]   $(\bar{M},\cl_{\bar{p}})$  \ is a homogeneous
pregeometry and $\cl$ is the restriction of $\cl_{\bar{p}}$ to
$M$.

\item[(ii)]   If     $(a_1,...,a_n)$  (from $\bar{M}$) is
independent over $A$ then:   \begin{center}
 $\tp(b_1,...,b_n/A)=\tp(a_1,...,a_n/A)$ \ \   iff \ \
$(b_1,...,b_n)$ is independent  over $A$.\end{center}

\item[(iii)]    $\bar{p}(x)$ is $\emptyset$-invariant and
generically stable.

\item[(iv)]    $(\bar{p}(x),x=x)$ is strongly regular.
\end{enumerate}
\end{prop}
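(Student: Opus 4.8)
The plan is to prove the four parts of Proposition \ref{Ppr} in order, bootstrapping from what we have already established about homogeneous pregeometries (Lemmas \ref{L34} and \ref{L35}) and transporting everything to the monster model via the heir $\bar p$.

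\medskip

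\textbf{Part (i).} First I would show $(\bar M,\cl_{\bar p})$ is a pregeometry. Transitivity, monotonicity and finite character of $\cl_{\bar p}$ are routine once we know $\bar p$ is definable (it is, by Lemma \ref{L35}), since then $a\notin\cl_{\bar p}(X)$ is controlled by the defining schema, and one can argue directly as in the proof of Lemma \ref{Lsym} that $\cl_{\bar p}$ is a closure operator; alternatively one postpones this to part (iv) and invokes Lemma \ref{Lsym}(i). For the exchange property over a monster I would reduce to finite parameter sets and use a maximal independent tuple, exactly mirroring the argument in Lemma \ref{Lsym}(ii), but I need to first know Morley sequences of $\bar p$ are totally indiscernible — so logically exchange is cleanest to deduce after (iii). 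The statement that $\cl$ is the restriction of $\cl_{\bar p}$ to $M$ follows from the definition of heir: for $a,b_1,\dots,b_n\in M$, $\phi(x,\bar b)\in\bar p$ iff $\phi(x,\bar b)\in p$ (heirs over $M$ agree with $p$ on $M$-formulas), so $\cl_{\bar p}(B)\cap M=\cl_p(B)=\cl(B)$ by Lemma \ref{L34}(ii) for finite $B$, and then for arbitrary $B\subseteq M$ by finite character. Finally homogeneity of $(\bar M,\cl_{\bar p})$: for finite $B\subseteq\bar M$, the non-closure set is the set of realizations of $\bar p|B$, which is a complete type; one needs infinite-dimensionality of $(\bar M,\cl_{\bar p})$, which holds because an $\emptyset$-independent $\omega$-sequence in $M$ stays $\emptyset$-independent in $\bar M$ (again by the definable-set characterization of $\cl_{\bar p}$).

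\medskip

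\textbf{Parts (ii) and (iii).} I would prove (ii) and (iii) together. The forward direction of (ii) — if $(a_1,\dots,a_n)$ is independent over $A$ and $\tp(\bar b/A)=\tp(\bar a/A)$ then $\bar b$ is independent over $A$ — is immediate, since independence of a tuple over $A$ is a property of its type over $A$ (it says $a_i\notin\cl_{\bar p}(A\cup\{a_j:j\ne i\})$, a condition on $\tp(\bar a/A)$). The substantive direction is the converse: any two independent $n$-tuples over $A$ have the same type over $A$. For $n=1$ this is essentially homogeneity of the pregeometry ($a\notin\cl_{\bar p}(A)$ iff $a\models\bar p|A$), extended from finite to arbitrary small $A$ using definability of $\bar p$. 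For general $n$ I would induct: given independent $(a_1,\dots,a_n)$ and $(b_1,\dots,b_n)$ over $A$, by induction $\tp(a_1,\dots,a_{n-1}/A)=\tp(b_1,\dots,b_{n-1}/A)$, then $a_n\notin\cl_{\bar p}(A\cup\{a_1,\dots,a_{n-1}\})$ means $a_n\models\bar p|(A,a_1,\dots,a_{n-1})$, similarly for $b_n$, and one applies an automorphism. Then (iii): $\emptyset$-invariance of $\bar p$ follows because the defining schema of $\bar p$, i.e. membership of $\phi(x,\bar b)$ in $\bar p$, depends via Lemma \ref{L35} only on which conjunctions $\bigwedge_{i\in w}\phi(a_i,\bar b)$ hold for a fixed $\emptyset$-independent sequence $(a_i)$ — and by part (ii) (the $n=1$ and general case) the type of such data over $\emptyset$ is determined, so whether $\phi(x,\bar b)\in\bar p$ depends only on $\tp(\bar b/\emptyset)$. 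Generic stability then follows: $\bar p$ is $\emptyset$-invariant, and a Morley sequence in $\bar p$ over $\emptyset$ is an $\emptyset$-independent sequence by Lemma \ref{L34}(iii), hence by Lemma \ref{L35} any formula $\phi(x,\bar b)$ holds of cofinitely many or finitely many terms according to whether it is large or small — this verifies Definition \ref{D1}. (Strictly one also wants this for a Morley sequence indexed by an arbitrary ordinal, but the same counting argument via Remark \ref{R32}(ii) applies to any independent family.)

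\medskip

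\textbf{Part (iv).} With $\bar p$ now $\emptyset$-invariant, strong regularity of $(\bar p(x),x=x)$ is, by Remark \ref{R22}(i), the statement that $a\models\bar p|B$ iff $a\models\bar p|\cl_{\bar p}(B)$ for all small $B$ — equivalently that $\cl_{\bar p}$ is idempotent, i.e. a closure operator, which by Lemma \ref{Lsym}(i) is exactly what we need and which part (i) (transitivity of the pregeometry) supplies. So (iv) is essentially a restatement of part of (i) through Lemma \ref{Lsym}. The main obstacle I anticipate is organizing the logical dependencies cleanly: the "closure operator" fact, exchange, and $\emptyset$-invariance are somewhat intertwined (exchange wants total indiscernibility of Morley sequences, which wants symmetry, which follows from generic stability, which wants the counting lemma \ref{L35}), so I would be careful to first nail down definability and the tuple-type characterization (ii), then $\emptyset$-invariance, then generic stability and hence symmetry/total indiscernibility, and only then close the loop on exchange and strong regularity — rather than the order in which the four items are listed.
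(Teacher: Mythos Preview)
Your plan has a genuine circularity that you sense (``the logical dependencies are intertwined'') but do not actually break. Both directions of your sketch of (ii) implicitly use $\emptyset$-invariance of $\bar p$. For the forward direction, you claim that ``$a_i\notin\cl_{\bar p}(A\cup\{a_j:j\ne i\})$'' is a property of $\tp(\bar a/A)$; but whether $a_i\models\bar p\,|\,(A,a_j:j\ne i)$ is governed by the defining schema of $\bar p$, which at this stage lives over an independent sequence in $M$, not over $A$ --- so a priori it depends on $\tp(\bar a/A\cup M)$. For the converse direction, ``apply an automorphism'' requires the $A$-automorphism sending $(a_1,\dots,a_{n-1})$ to $(b_1,\dots,b_{n-1})$ to fix $\bar p$, which is again invariance. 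Since you then derive (iii) from (ii), the argument loops.

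The paper breaks the circle by proving exchange for $(\bar M,\cl_{\bar p})$ \emph{first}, with no appeal to invariance or to total indiscernibility. The point is that $\bar p$ (hence $\bar p^{(2)}$) is definable over $M$, so for each $\phi(x_1,x_2,\bar z)$ there are $M$-formulas $\delta_1(\bar z),\delta_2(\bar z)$ with $\phi(x_1,x_2,\bar b)\in\bar p^{(2)}\Leftrightarrow\models\delta_1(\bar b)$ and $\phi(x_2,x_1,\bar b)\in\bar p^{(2)}\Leftrightarrow\models\delta_2(\bar b)$; exchange in $(M,\cl)$ gives $M\models\forall\bar z(\delta_1\leftrightarrow\delta_2)$, hence the same in $\bar M$, and this \emph{is} exchange for $\cl_{\bar p}$. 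With (i) in hand, the paper proves (ii) by an inductive trick that avoids automorphisms altogether: pick $a'\models\bar p\,|\,(A,\bar a,\bar b)$, use exchange to see that both $(a_1,\dots,a_n)$ and $(b_1,\dots,b_n)$ are independent over $Aa'$, apply the inductive hypothesis over the enlarged base $Aa'$, and compare types through $a'$. Only then does (iii) follow. So the correct order is (i) $\Rightarrow$ (ii) $\Rightarrow$ (iii), the reverse of your proposed ``exchange is cleanest to deduce after (iii)''.
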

\begin{proof}
(i) is a reasonably routine exercise, using the fact that $\bar{p}$ is defined
by the same schema which defines $p$, and is left to the reader. But we will briefly point out
why exchange holds: Note first that $\bar{p}^{(2)}(x_{1},x_{2})$ is also definable, over the same parameters as $p$.
Let $\phi(x_{1},x_{2},\bar{b})$ be an arbitrary formula over $\bar M$, with parameters $\bar{b}$ witnessed.
To prove exchange it suffices to see that $\phi(x_{1},x_{2},\bar{b})\in \bar{p}^{(2)}(x_{1},x_{2})$ iff
$\phi(x_{2},x_{1},\bar{b}) \in \bar{p}^{(2)}(x_{1},x_{2})$. By definability of $\bar{p}^{(2)}$, there are
formulas $\delta_{1}(\bar{z})$ and $\delta_{2}(\bar{z})$ over $M$ such that for any  $\bar{b}'$ from $\bar M$,
we have
\begin{center}
$\phi(x_{1},x_{2},\bar{b}')\in \bar{p}^{(2)}(x_{1},x_{2})$ \ iff \
$\models\delta_{1}(\bar{b}')$, \ \  and\end{center}
\begin{center}
$\phi(x_{2},x_{1},\bar{b}') \in \bar{p}^{(2)}(x_{1},x_{2})$ \ iff
\  $\models \delta_{2}(\bar{b}')$.\end{center} As exchange holds
inside $M$, we have $M\models \forall \bar{z}(\delta_{1}(\bar{z})
\leftrightarrow\delta_{2}(\bar{z}))$. Hence this also holds in
${\bar M}$.

\smallskip
(ii)   We prove it by induction on $n$. For $n=1$, by definition,
we have $a_1,b_1\models \bar{p}\,|\,A$. Now assume true for $n$
and prove for $n+1$. Without loss of generality $A=\emptyset$.
Suppose first that    $\tp(b_1,...,b_{n+1})=\tp(a_1,...,a_{n+1})$.
Let $a'$ realize $p\,|\,(a_1,...,a_{n+1},b_1,...,b_{n+1})$. \ So \
$\tp(a_1,...,a_n,a_{n+1})=\tp(a_1,...,a_n,a')$.  On the other
hand, by the induction   assumption (over $\emptyset$),
$(b_1,...,b_n)$ is independent, so independent over $a'$ (by
symmetry). By induction assumption applied over $a'$,
$\tp(b_1,...,b_n,a')=\tp(a_1,...,a_n,a')$. Hence
$\tp(b_1,...,b_n,a')=\tp(b_1,...,b_n,b_{n+1})$. As
$a'\notin\cl_{\bar{p}}(b_1,...,b_n)$, also $b_{n+1}\notin
\cl_{\bar{p}}(b_1,...,b_n)$. Thus $(b_1,...b_n,b_{n+1})$ is
independent.

The converse (if $(b_1,...,b_{n+1})$ is independent then it
realizes $\tp(a_1,...,a_{n+1})$) is proved in a similar fashion
and left to the reader.

\smallskip
(iii)   By part (i), Lemma \ref{L35} also applies to the
pregeometry $\cl_{\bar{p}}$. Let $(a_i\,:\,i\in\omega)$ be
$\cl_{\bar{p}}$-independent. Then $\bar{p}(x)$ is defined over
$(a_i\,:\,i\in\omega)$ as in Lemma \ref{L35}. But if
$(b_i\,:\,i\in\omega)$ has the same type as $(a_i\,:\,i\in\omega)$
then, by (ii), it is also $\cl_{\bar{p}}$\,-independent,  hence
$\bar{p}(x)$ is defined over $(b_i\,:\,i\in\omega)$ in the same
way. This implies that $\bar{p}$ is $\emptyset$-invariant. Thus,
by Lemma \ref{L34}(iii)  a Morley sequence in $\bar{p}$ is the
same thing as an infinite $\cl_{\bar{p}}$\,-independent set. By
Lemma \ref{L35} and Definition \ref{D1}, $\bar{p}$ is generically
stable.

\smallskip
(iv)   By part (i) $(\bar{M},\cl_{\bar{p}})$ is a pregeometry  so,
by Lemma \ref{Lsym}(i), $(\bar{p}(x),x=x)$ is strongly regular.
\end{proof}

We now drop (for a moment) all earlier assumptions and summarize
the situation:

\begin{thm}\label{Tpr}
Let $T$ be an arbitrary theory.

\begin{enumerate}
\item[(i)]   Let $p(x)$ be a global $\emptyset$-invariant  type
such that $(p(x),x=x)$ is strongly regular. Then $(\bar{M},\cl_p)$
is a homogeneous pregeometry.

\item[(ii)]  \ On the other hand, suppose $M\models T$ and
$(M,\cl)$ is a homogeneous pregeometry. Then there is a unique
global $\emptyset$-invariant generically stable type $p(x)$ such
that $(p(x),x=x)$ is strongly regular, and such that the
restriction of $\cl_p$ to $M$ is precisely $\cl$.\end{enumerate}
\end{thm}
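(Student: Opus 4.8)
The plan is to treat the two directions separately, with part (i) following almost entirely from results already assembled in Section \ref{s3}, and part (ii) being essentially a repackaging of Proposition \ref{Ppr}. For part (i), I would first invoke Lemma \ref{Lsym}(i): since $p$ is $\emptyset$-invariant and $(p(x),x=x)$ is strongly regular, $\cl_p = \cl^\emptyset_p$ is a closure operator on $\bar M$. To get exchange (hence a genuine pregeometry), I would apply Corollary \ref{C1} --- or rather the chain behind it: strong regularity of $(p(x),x=x)$ together with the dichotomy Theorem \ref{Pr1} forces $p$ to be symmetric (a non-symmetric regular type would yield an $\emptyset$-definable partial order making every Morley sequence strictly increasing, which is incompatible with the closure operator $\cl_p$ actually being exchange-free; more directly, one argues that with $(p,x=x)$ strongly regular the ordering produced in Theorem \ref{Pr1}(ii) cannot exist because realizations of $p|B$ would have to be both generic and strictly $\leq$-comparable, contradicting that $\phi(x,a)\notin p$ while $\phi(x,a)$ is implied by $p|Ba$). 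Once $p$ is symmetric, every Morley sequence in $p$ over $\emptyset$ is totally indiscernible, so Lemma \ref{Lsym}(ii) gives that $\cl_p$ is a pregeometry operator on $\bar M$. It remains to check homogeneity in the sense of Definition \ref{Dhom}: for finite $B$, the set $\{a : a\notin\cl_p(B)\}$ is exactly the set of realizations of $p|B$, which is a complete type over $B$ by definition of $\cl_p$; and infinite-dimensionality of $(\bar M,\cl_p)$ holds because a Morley sequence in $p$ over $\emptyset$ of length $\omega$ is $\emptyset$-independent in $\cl_p$ by Lemma \ref{L34}(iii) (or directly from the definition of $\cl_p$ and the non-algebraicity of $p$).

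For part (ii), let $(M,\cl)$ be a homogeneous pregeometry with $M\models T$. I would let $p(x)$ be the generic type of $(M,\cl)$ from Lemma \ref{L34}(i) and let $\bar p(x)$ be its unique global heir, which exists and is unique because $p$ is definable by Lemma \ref{L35}. Then Proposition \ref{Ppr} delivers everything at once: part (iii) of that proposition says $\bar p$ is $\emptyset$-invariant and generically stable, part (iv) says $(\bar p(x),x=x)$ is strongly regular, and part (i) says $\cl$ is the restriction of $\cl_{\bar p}$ to $M$. So I would set $p := \bar p$ for the statement of the theorem (a minor notational clash with the generic type over $M$, which I would flag). For uniqueness: suppose $q(x)$ is another global $\emptyset$-invariant generically stable strongly regular type whose associated closure operator restricts to $\cl$ on $M$. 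Then on $M$ we have $\cl_q\!\restriction\! M = \cl = \cl_{\bar p}\!\restriction\! M$, so $q|B$ and $\bar p|B$ have the same realizations in $M$ for every finite $B\subseteq M$, hence $q\!\restriction\! M = \bar p\!\restriction\! M = p$. Since both $q$ and $\bar p$ are $\emptyset$-invariant and (being generically stable, hence definable by Proposition \ref{GS}(ii)) are the unique heir of their restriction to $M$ --- one needs here that $M$ is a model, so that "the global heir of $p$" pins down the type, using that a generically stable type is the unique nonforking extension of its restriction to any model containing a witness set, Proposition \ref{GS}(iv) --- we conclude $q = \bar p = p$.

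The main obstacle I anticipate is the symmetry step inside part (i): Theorem \ref{Pr1} is stated as a dichotomy but does not on its face say that the strongly-regular-in-the-$(p,x=x)$-sense hypothesis rules out the non-symmetric branch, so I would need to argue carefully that the $A_0$-definable partial order $\leq$ produced in case (ii) of Theorem \ref{Pr1} is genuinely incompatible with $\cl^\emptyset_p$ being a closure operator --- or, cleaner, simply prove symmetry directly from strong regularity. Concretely: if $(a,b)$ is a Morley sequence in $p$ over a finite $A_0$, strong regularity applied with $B = A_0 a$ shows that for $d\models p|A_0$ with $d\not\models p|A_0 a$ we get $p|A_0 a \vdash p|A_0 a d$; running the asymmetry formula $\phi$ through this, as in the proof of Theorem \ref{Pr1}(ii), forces a contradiction with $\phi(x,a)\notin p$. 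I would lift this to show $\tp(a,b/A_0) = \tp(b,a/A_0)$, i.e. $p$ is symmetric, and then proceed as above. A secondary, purely bookkeeping obstacle is keeping the two meanings of "$p$" (generic type over $M$ versus its global heir) straight in the uniqueness argument; I would fix notation once at the start of part (ii) to avoid confusion.
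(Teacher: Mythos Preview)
Your plan for part (ii) is correct and is exactly how the paper proceeds: existence is Proposition~\ref{Ppr}(i),(iii),(iv), and your uniqueness argument via Proposition~\ref{GS}(ii),(iv) is sound (the paper presents Theorem~\ref{Tpr} as a summary and gives no separate proof).

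The genuine gap is in part (i), and you have correctly located it: to invoke Lemma~\ref{Lsym}(ii) and obtain exchange you need $p$ to be symmetric. However, your proposed resolution --- deriving symmetry from strong regularity of $(p(x),x=x)$ --- cannot succeed, because that implication is false. Take $T$ to be the theory of dense linear orders without endpoints and let $p(x)$ be the global $\emptyset$-definable type ``$x > m$ for every $m \in \bar M$''. Then $(p,x=x)$ is strongly regular: if $a \not\models p|B$ then $a \leq b_0$ for some $b_0 \in B$, and $p|B \ni (x > b_0) \vdash (x > a)$, so $p|B \vdash p|Ba$. But $p$ is not symmetric (any Morley sequence is strictly $<$-increasing), and $\cl_p$ fails exchange: for $a < b$ both realizing $p|\emptyset$ one has $a \in \cl_p(b) \setminus \cl_p(\emptyset)$ while $b \notin \cl_p(a)$. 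The paper's own Example~\ref{field}, combined with Theorem~\ref{Tqg}, furnishes another such instance. Your sketched contradiction (``$\phi(x,a) \notin p$ while $\phi(x,a)$ is implied by $p|Ba$'') does not arise: in the ordered example $\phi(x,a)$ is $x < a$, and $p|Ba$ contains $x > a$, not $x < a$. The argument in the proof of Theorem~\ref{Pr1}(ii) uses only regularity and goes through unchanged under the stronger hypothesis, producing the partial order rather than a contradiction.

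The upshot is that part (i), read literally, is not provable from the stated hypothesis. The abstract's formulation (``generically stable strongly regular types $(p(x),x=x)$'') and the fact that only part (ii) is ever cited elsewhere (in Corollary~\ref{C1}) indicate that ``generically stable'' --- equivalently, by Proposition~\ref{GS}(iii), ``symmetric'' --- is the intended additional hypothesis in (i). With that added, your route through Lemma~\ref{Lsym}(i),(ii) is correct and is exactly what the paper's assembled lemmas support for this direction.
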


\vspace{2mm} \noindent We end this section by pointing out the
connection to exponential fields, as mentioned to us by Kirby. In
\cite{Kirby}, Jonathan Kirby proved that $\ecl(-)$, ``exponential
algebraic closure", as originally defined by Macintyre, gives a
pregeometry on {\em any} exponential field, and this result
extends those of Wilkie \cite{Wilkie} for the complex exponential
field. It is an open question whether for the complex exponential
field, $\ecl(-)$ is {\em homogeneous} in the sense of Definition
\ref{Dhom} above. A positive answer would yield quasiminimality
for the complex exponential field as well as generic stability and
strong regularity of its (unique) exponentially transcendental
type. On the other hand, a positive answer does exist for Zilber's
pseudoexponentiation and some other exponential fields.

\section{Quasiminimal structures}\label{s5}

Recall that a   1-sorted structure $M$  in a countable language is
called quasi-minimal if   $M$ is uncountable and every definable
(with parameters) subset of $M$ is countable or co-countable; the
definition was given by Zilber in \cite{Z1}. Here we investigate
the general model theory of quasiminimality, continuing an earlier
work by Itai, Tsuboi and Wakai \cite{ITW}.

\medskip
Throughout this section fix a quasiminimal structure $M$ and let
 $\bar{M}$ be a saturated elementary extension. The set of all formulas (with
parameters) defining a co-countable subset of $M$ forms a complete
1-type $p(x)\in S_1(M)$; we will call it the generic type of $M$.
If $p(x)$ happens to be definable we will denote its (unique)
global heir by $\bar{p}(x)$.

\begin{rmk}
(i) \ In quasiminimal structures Zilber's countable closure
operator $\ccl$ is defined via $\cl_p$\,:
\begin{center}
$\cl_p^0(A)=\cl_p(A)$, \  $\cl_p^{n+1}(A)=\cl_p(\cl_p^{n}(A))$ \
and \ $\ccl(A)=\bigcup_{n\in\omega} \cl_p^n(A) $.\end{center}

  (ii) \    If $A$ is countable then $\ccl(A)$ is countable, too.
\  $\ccl$ is a closure operator on $M$.

\smallskip
(iii) \  $\cl_p$ is a closure operator \  iff \ $\cl_p=\ccl$
(which is in general  not the case, see Example \ref{E4}).
\end{rmk}

In  Section 2 of  \cite{ITW} Itai, Tsuboi and Wakai, studied the
case when $M$ is strongly $\aleph_1$-homogeneous, in the model
theoretic sense, namely, any partial elementary map between
countable subsets extends to an automorphism of $M$. They proved
in Proposition 2.10 there that if $\ccl$ is   {\em not} a
pregeometry operator then some uncountable subset of $M$ is
totally ordered by a formula, and they also remark that Maesono
has strengthened in one direction the conclusion to: $\Th(M)$ has
the strict order property (Remark 2.16 there). They also proved
(assuming  model-theoretic strong $\aleph_{1}$-homogeneity)that
$\ccl$ is a pregeometry operator whenever $|M|\geq\aleph_2$.

In   Theorem \ref{Tbased} below we will    use  a weaker
assumption than strong $\aleph_1$-homogeneity,  which we call countable
basedness, and derive a stronger  dichotomy similar to the one for
strongly regular types in Theorem \ref{Pr1}.   Then, in  Corollary
\ref{aleph2} we prove that  any  quasiminimal structure of size at
least $\aleph_2$ must be of symmetric type; in particular, $(M,\ccl)$
is a  homogeneous pregeometry (in the sense of the previous section)
after possibly countably many  parameters from $M$
into the language. In Theorem \ref{Tsop} we will prove that the
failure of countable basedness implies the strict order property.

\medskip
Recall from the introduction that $p$ does not split over $A$, if \
$(\phi(x,\bar{b}_1)\leftrightarrow\phi(x,\bar{b}_2))\in p(x)$ \
for all $\phi(x,\bar{y})$ over $A$  and all
$\bar{b}_1,\bar{b}_2\in M$ realizing the same type over $A$.
We talk explicitly about splitting rather than invariance, because $M$ need not be
saturated/homogeneous. Remember that in this section we are taking $p(x)\in S_{1}(M)$ to be
the unique ``generic" type of $M$ consisting of all formulas defining co-countable sets.

\begin{defn}
\begin{enumerate} \item[(i)] $p(x)$  is  {\em based on} $A$ if:
\begin{center}
$p$ does not split over $A$  and $\ccl(AC)\subsetneq M$ for all
finite $C\subset M$\end{center}

\item[(ii)] \ $p(x)$ (or $M$) is  {\em countably based} if there
is a countable $A\subset M$ such that it is based on $A$.
\end{enumerate}
\end{defn}

The technical  condition $\ccl(AC)\subsetneq M$ is satisfied by
any countable  $A$, so countable baseness is equivalent to the
existence of a countable   subset $A$ over which $p$ does not
split. In the proof of Corollary \ref{aleph2} we will use an
uncountable base set, and this is why it  is added: it describes a
relative smallness of $A$ in $M$.

\smallskip
If  $p$ does not split over $A$ then we say that $(a_1,...,a_n)$
is a weak Morley sequence in $p$ over $A$ if $a_{k}$ realizes
$p\,|\,(A,a_1,...,a_{k-1})$ for all relevant $k$. \ As in the case
of global invariant types weak Morley sequences are indiscernible
over $A$. Now we can state one of our main theorems, whose proof
will be presented in a somewhat more general context in the next
section, and then draw an important Corollary.

\begin{thm}\label{Tbased}
Suppose that   $p(x)$ is   based on $A\subset M$. Then $\cl^A_p$
is a closure operator and exactly one of the following two holds:

\begin{enumerate}
\item Every (some) weak Morley sequence in $p$ over $A$ is totally
indiscernible; in this case $(M,\cl^A_p)$ is a homogeneous
pregeometry, $p$ is definable over $A$, $\bar{p}$ (its unique
global heir) is generically stable and $(\bar{p}(x),x=x)$ is
strongly regular.

\item There exists a weak Morley sequence in $p$ over $A$ which is
not totally indiscernible; in this case there is a finite
extension $A_0$ of $A$ and an $A_0$-definable partial order $\leq$
such that every weak Morley sequence in $p$ over $A_0$ is strictly
increasing.
\end{enumerate}
\end{thm}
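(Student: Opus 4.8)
The plan is to mirror the proof of Theorem \ref{Pr1}, but in the non-saturated setting where ``invariant'' is replaced by ``does not split over $A$'' and ``Morley sequence'' by ``weak Morley sequence''. First I would establish that $\cl^A_p$ is a closure operator. Monotonicity and finite character are immediate from the definition of $\cl_p$; the content is transitivity, i.e. $\cl^A_p(\cl^A_p(B)) = \cl^A_p(B)$. For this I need the strong-regularity-type statement $p\,|\,(A,B) \vdash p\,|\,\cl^A_p(B)$ for finite (hence all) $B$. The key tool is that $p$ is based on $A$, so $\ccl(AB) \subsetneq M$, which guarantees a realization of $p\,|\,(AB)$ actually exists inside $M$; combined with non-splitting one argues (exactly as in Remark \ref{R22}(i) and Lemma \ref{Lsym}(i)) that $a \models p\,|\,B$ iff $a \models p\,|\,\cl^A_p(B)$, giving transitivity. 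This part I expect to be routine given the earlier machinery.

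Next comes the dichotomy on total indiscernibility. By symmetry-type arguments, either every weak Morley sequence in $p$ over $A$ is totally indiscernible, or some one is not — and I would first check these are exhaustive and mutually exclusive (that ``some'' and ``every'' coincide here follows because any finite fragment of one weak Morley sequence has the same type over $A$ as the corresponding fragment of any other, by non-splitting and indiscernibility). In case (1), with total indiscernibility in hand, exchange for $\cl^A_p$ follows as in Lemma \ref{Lsym}(ii): take a maximal (necessarily finite) weak Morley sequence inside a finite $B \supseteq A$, show $\cl^A_p(B) = \cl^A_p(a_1,\dots,a_n)$, and then run the chain of equivalences ``$b \notin \cl^A_p(Ba)$ iff $(a_1,\dots,a_n,a,b)$ is a weak Morley sequence iff $(a_1,\dots,a_n,b,a)$ is iff $b \notin \cl^A_p(B)$ and $a \notin \cl^A_p(Bb)$''. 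So $(M,\cl^A_p)$ is a pregeometry, and it is homogeneous and infinite-dimensional: infinite-dimensionality because $\ccl(AC) \subsetneq M$ for all finite $C$ forces $\dim(M) \geq \aleph_0$ (if $\dim$ were finite, a finite independent set would have $M$ in its closure); homogeneity because for finite $B$ the non-closure set is exactly the realizations of $p\,|\,B$, which is complete. Definability of $p$ over $A$ then follows from Lemma \ref{L35} applied to this pregeometry (an $\emptyset$-independent $\omega$-sequence gives a defining scheme), and one transfers to $\bar M$: $\bar p$ is the unique global heir, Proposition \ref{Ppr} (or Theorem \ref{Tpr}) gives that $\bar p$ is $\emptyset$-invariant, generically stable, and $(\bar p(x),x=x)$ is strongly regular — here I would need to check that the restriction of $\cl_{\bar p}$ to $M$ really is $\cl^A_p$, which is where $A$-invariance of the heir is used.

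In case (2), I follow the proof of Theorem \ref{Pr1}(ii) essentially verbatim. There is a finite $A_0 \supseteq A$ and a weak Morley sequence $(a,b)$ of length $2$ over $A_0$ with $\tp(a,b/A_0) \neq \tp(b,a/A_0)$; pick $\phi(x,y) \in \tp(a,b/A_0)$ witnessing the asymmetry $\phi(x,y) \to \neg\phi(y,x)$. Then $\phi(a,x) \wedge \neg\phi(x,a) \in p(x)\,|\,(A_0,a)$, so $\phi(x,a) \notin p(x)$. The crucial claim is that $(p\,|\,A_0)(t) \cup \{\phi(t,a)\} \cup \{\neg\phi(t,b)\}$ is inconsistent: if $d$ realized it, then $d \not\models p\,|\,(A_0,a)$ (witnessed by $\phi(x,a)$), so by the transitivity/strong-regularity established above $p\,|\,(A_0,a) \vdash p\,|\,(A_0,a,d)$, hence $b \models p\,|\,(A_0,a,d)$, hence $b \models p\,|\,(A_0,d)$, and since $\phi(d,x) \in p(x)$ by non-splitting we get $\models \phi(d,b)$, contradiction. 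Compactness then yields $\theta(t) \in p\,|\,A_0$ with $\models \forall t(\phi(t,a) \wedge \theta(t) \to \phi(t,b))$, and setting $x \lqq y :\equiv \forall t(\phi(t,x) \wedge \theta(t) \to \phi(t,y))$ gives an $A_0$-definable quasi-order with $a \lqq b$ but $b \nlqq a$ (using $\models \phi(a,b) \wedge \theta(a) \wedge \neg\phi(a,a)$); the strict part $<$ then strictly increases along weak Morley sequences over $A_0$, since any two consecutive terms form a weak Morley sequence of length $2$ and hence realize the same pair of types as $(a,b)$.

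The main obstacle I anticipate is not any single step but the care needed in replacing saturation-based arguments with non-splitting plus the ``$\ccl(AC) \subsetneq M$'' hypothesis: one must repeatedly ensure that the realizations of $p\,|\,B$ that the proof wants to produce actually exist \emph{inside} $M$ (not merely in $\bar M$), and that weak Morley sequences can be extended within $M$. This is exactly the role of basedness, and the bookkeeping of where $M$ versus $\bar M$ is used — particularly in identifying $\cl_{\bar p}\!\restriction\! M$ with $\cl^A_p$ in case (1) — is the delicate part.
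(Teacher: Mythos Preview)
Your approach to case~(1) and the overall structure are sound and match the paper's (which proves this theorem as a special case of Theorem~\ref{Tgeneral} in Section~\ref{s6}). But there is a genuine gap in your case~(2) argument, precisely at the point you flag as ``delicate'' without actually resolving it.

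When you assume $(p\,|\,A_0)(t)\cup\{\phi(t,a)\}\cup\{\neg\phi(t,b)\}$ is consistent and take $d$ realizing it, this $d$ lives in $\bar M$, not in $M$. Now both steps you invoke fail: (a) ``$p\,|\,(A_0,a)\vdash p\,|\,(A_0,a,d)$'' has no meaning, since $p\in S(M)$ and $d\notin M$, so $p\,|\,(A_0,a,d)$ is undefined --- the closure-operator property you established is a statement about subsets of $M$; (b) ``$\phi(d,x)\in p(x)$ by non-splitting'' requires the parameter $d$ to lie in $M$. You cannot simply restrict to $d\in M$ either: $M$ is not saturated, so a consistent partial type over $M$ need not be realized in $M$, and if you pass to a single formula $\theta\in p\,|\,A_0$ realized in $M$, that witness need not realize all of $p\,|\,A_0$, so your regularity step still does not apply. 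The paper avoids this entirely by \emph{not} arguing via inconsistency of a type. Instead it proves directly that $\phi(M,a)\subsetneq\phi(M,b)$: for $d\in M$ with $\models\phi(d,a)$, split into the cases $d\in\cl_p(A_0)$ (where $a\equiv b$ over $\cl_p(A_0)$ gives $\models\phi(d,b)$) and $d\notin\cl_p(A_0)$ (where $d\in M$ realizes $p\,|\,A_0$, non-splitting gives $(\phi(a,x)\leftrightarrow\phi(d,x))\in p$, and $d\in\cl_p(A_0a)$ plus the closure property force $b$ to satisfy it). The order is then $x<y$ iff $\phi(M,x)\subsetneq\phi(M,y)$, which is first-order.

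A secondary point: your proof that $\cl^A_p$ is a closure operator is not ``routine given the earlier machinery''. Remark~\ref{R22}(i) and Lemma~\ref{Lsym}(i) only record the \emph{equivalence} between strong regularity and the closure property; they do not derive either from non-splitting plus basedness. The paper's argument (Lemma~\ref{Lclosure}(i)) is a genuine piece of work: assuming $a\in\cl_p^2(C)\setminus\cl_p(C)$, one builds a $\cl_p$-free sequence $(a,a_1,a_2)$ over $C$ (using that $M$ is not finitely $\Cl_p$-generated to find $a_1,a_2$) and shows it is \emph{not} indiscernible, contradicting non-splitting. You should supply this argument rather than defer it.
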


\begin{cor}\label{aleph2}
If  \,$|M|\geq\aleph_2$\  then $p$ is definable  (hence countably
based)  and the first case of Theorem \ref{Tbased}  holds.
\end{cor}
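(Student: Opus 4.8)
The plan is to derive from $|M|\geq\aleph_2$ an admissible base for $p$ which is uncountable but of size at most $\aleph_1$ — this is exactly the point of the clause $\ccl(AC)\subsetneq M$ in the definition of ``based on $A$'' — then apply Theorem~\ref{Tbased}, rule out its second alternative, and upgrade the conclusion. For the base I would run a transfinite closing-off construction of length $\omega_1$: build a continuous increasing chain $(A_\alpha:\alpha<\omega_1)$ of countable subsets of $M$, at each successor step throwing in enough new parameters (enumerations of the countable sets $\phi(M,\bar b)$, which sit inside $\cl_p(\bar b)$, for $\bar b$ from $A_\alpha$, together with witnesses to every instance of splitting of $p$ visible at that stage) so that $p$ does not split over $A:=\bigcup_{\alpha<\omega_1}A_\alpha$; then $|A|\leq\aleph_1$. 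Since then $|\ccl(AC)|\leq\aleph_1<\aleph_2\leq|M|$ for every finite $C\subseteq M$, we have $\ccl(AC)\subsetneq M$, so $p$ is based on $A$ and Theorem~\ref{Tbased} applies. I expect this step to be the main obstacle: making a closing-off of length only $\omega_1$ trap all splitting of $p$ is precisely where quasiminimality and the hypothesis $|M|\geq\aleph_2$ are used essentially — if no such $A$ existed, $p$ would split over every countable set, placing us in the realm of the deferred Theorem~\ref{Tsop}.

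Next I would rule out the ``ordered'' alternative. Suppose case~(2) of Theorem~\ref{Tbased} holds, say with a finite $A_0\supseteq A$ and an $A_0$-definable partial order $\leq$ along which every weak Morley sequence in $p$ over $A_0$ is strictly increasing. Since $\cl_p$ is monotone and $\cl_p(A_0\cup\{c\})$ is countable for a single element $c$, whenever $c\models p\,|\,A_0$ and $b\notin\cl_p(A_0\cup\{c\})$ the pair $(c,b)$ is a weak Morley sequence in $p$ over $A_0$, so $c<b$ and in particular $\neg(b<c)$; hence $\{x\in M: x<c\}\subseteq\cl_p(A_0\cup\{c\})$ is countable for every $c\models p\,|\,A_0$. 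On the other hand, using $|M|\geq\aleph_2$, I recursively choose $a_i\models p\,|\,(A_0\cup\{a_j:j<i\})$ for $i<\omega_1$ — possible since $\cl_p$ of $A_0$ together with countably many elements is countable, hence a proper subset of $M$ — and then choose $a^{*}\models p\,|\,(A_0\cup\{a_i:i<\omega_1\})$, possible since $|\cl_p(A_0\cup\{a_i:i<\omega_1\})|\leq\aleph_1<|M|$. By monotonicity each pair $(a_i,a^{*})$ is a weak Morley sequence in $p$ over $A_0$, so $a_i<a^{*}$ for all $i<\omega_1$; thus $\{x\in M:x<a^{*}\}$ contains the uncountable set $\{a_i:i<\omega_1\}$, contradicting the previous sentence since $a^{*}\models p\,|\,A_0$. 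Therefore case~(1) of Theorem~\ref{Tbased} holds: $p$ is definable over $A$, $\bar p$ is generically stable, and $(\bar p(x),x=x)$ is strongly regular.

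Finally, definability of $p$ over $A$ yields, for each of the countably many $L$-formulas $\phi(x,\bar y)$, a $\phi$-definition with parameter tuple $\bar d_\phi$ from $A$; letting $A'$ collect the coordinates of all the $\bar d_\phi$, the set $A'$ is countable and $p$ does not split over $A'$, so $p$ is countably based. This establishes all assertions of the corollary.
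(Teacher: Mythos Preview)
Your outline has the right shape, but two steps need repair.

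\emph{Finding the base.} Your closing-off construction is both more elaborate than needed and not clearly correct: the phrase ``witnesses to every instance of splitting of $p$ visible at that stage'' is not well-defined, because instances of splitting over $A_\alpha$ are witnessed by pairs $\bar b_1,\bar b_2$ ranging over all of $M$, not just over $A_\alpha$, so there may be uncountably many such instances to kill and you cannot keep $A_{\alpha+1}$ countable. You even flag this as the ``main obstacle'' but do not resolve it. The paper's argument here is a one-liner you are missing: since every formula in $p$ defines a co-countable subset of $M$, $p$ is finitely satisfiable in \emph{any} uncountable subset of $M$, and hence does not split over any such set. So one simply takes $M_0\subset M$ to be $\ccl$-closed of size $\aleph_1$; then $p$ is based on $M_0$ and Theorem~\ref{Tbased} applies. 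No closing-off against splitting is needed.

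\emph{Ruling out case~(2).} You write ``a finite $A_0\supseteq A$'' and then claim $\cl_p(A_0\cup\{c\})$ is countable. But Theorem~\ref{Tbased} only gives $A_0$ as a \emph{finite extension} of $A$, so $|A_0|=|A|$, which in your construction (and in the paper's) is $\aleph_1$. Hence $|\cl_p(A_0\cup\{c\})|\leq\aleph_1$, not $\aleph_0$, and the contradiction you state --- uncountably many $a_i$ below $a^*$, yet $\{x:x<a^*\}$ countable --- evaporates. The paper's fix is to push the weak Morley sequence to length $\omega_2$: then $\{x:x<a_{\omega_1}\}$ and $\{x:a_{\omega_1}<x\}$ are disjoint definable sets, each uncountable (containing $\{a_i:i<\omega_1\}$ and $\{a_i:\omega_1<i<\omega_2\}$ respectively), directly contradicting quasiminimality. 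Your length-$(\omega_1{+}1)$ sequence can also be salvaged: $\{x:x<a^*\}\supseteq\{a_i:i<\omega_1\}$ is uncountable, hence co-countable by quasiminimality, so of size $\geq\aleph_2$; but you correctly showed $\{x:x<a^*\}\subseteq\cl_p(A_0\cup\{a^*\})$, which has size at most $\aleph_1$ --- that is the contradiction.

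Your final paragraph, extracting a countable base from the definability delivered by case~(1), is fine.
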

\begin{proof} Suppose $|M|\geq\aleph_2$ and let $M_0\subset M$ be
$\ccl$-closed of   size $\aleph_1$. First we   show that   $p(x)$
is finitely satisfiable in $M_0$ (actually it is finitely satisfiable in any
uncountable subset of $M$):  for any $\phi(x)\in p(x)$\, $\phi(M)$
is co-countable, so $\phi(M)$ intersects $M_0$. Thus $p$ does not
split over  $M_0$ and,  since $|\ccl(M_0C)|=\aleph_1$ for any
countable $C$,    $p$ is based on $M_0$. Theorem \ref{Tbased}
applies; we will show that Case (1) holds. Otherwise, there is a
definable $<$ and a strictly increasing sequence
$\{a_i\,|\,i<\omega_2\}\subset M$. Then $x<a_{\omega_1}$ and
$a_{\omega_1}<x$ define uncountable mutually disjoint subsets of
$M$. A contradiction. Thus Case (1) holds and the conclusion
follows.
\end{proof}

\begin{thm}\label{Tsop}
If $p$ is not countably based then there   exists a definable
partial order (on some $M^n$)   which has uncountable strictly
increasing chains.
\end{thm}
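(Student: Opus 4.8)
The plan is to assume $p$ is not countably based and produce a definable partial order on some $M^n$ with an uncountable strictly increasing chain. The starting point is to unwind the definition: failure of countable basedness means that for \emph{every} countable $A \subset M$, $p$ splits over $A$ (the clause $\ccl(AC) \subsetneq M$ being automatic for countable $A$). I would try to iterate this along $\omega_1$ to build a strictly increasing, uncountable chain with respect to some \emph{fixed} definable relation. The first obstacle to address is that each instance of splitting over $A$ a priori uses a different formula $\phi(x,\bar y)$; to get a single partial order I need to stabilize the formula. This is handled by a counting/pigeonhole argument: since $L$ is countable, when I run a transfinite construction of length $\omega_1$ choosing at each stage a formula witnessing splitting over the countable set accumulated so far, one formula $\phi(x,\bar y)$ (with a fixed partition of variables) must be chosen cofinally often, hence on an uncountable subset of stages; restrict attention to that $\phi$.

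Next I would carry out the construction itself. Fix $A_0$ a countable subset over which $p$ is, say, defined enough to start (or just $A_0 = \emptyset$). Recursively build tuples $\bar b_i$ ($i < \omega_1$) from $M$ as follows: having built $\bar b_j$ for $j < i$, let $A_i = A_0 \cup \{\bar b_j : j < i\}$, which is countable, so $p$ splits over $A_i$; choose (after the stabilization above) $\bar b_i, \bar b_i' \in M$ realizing the same type over $A_i$ but with $\phi(x,\bar b_i) \in p$ and $\neg\phi(x,\bar b_i') \in p$ (or vice versa — fix the orientation using stabilization too). The key structural fact to extract is that, because $\bar b_i \equiv_{A_i} \bar b_i'$ and all earlier $\bar b_j$ lie in $A_i$, and because $p$ does \emph{not} split over $A_i$ would be the negation, one obtains a definable relation comparing the $\bar b_j$'s: roughly, set $\bar u \le \bar v$ to mean that every completion of the relevant type that is ``$\phi$-consistent with $\bar v$'' is ``$\phi$-consistent with $\bar u$,'' mimicking the definition of $\lqq$ in the proof of Theorem \ref{Pr1}. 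I would define $x \lqq y$ on $M^n$ (where $n = \ell(\bar y)$) by a formula of the shape $(\forall t)(\psi(t) \wedge \phi(t,x) \to \phi(t,y))$ for a suitable $\psi(t) \in p$ obtained by compactness, exactly as in Theorem \ref{Pr1}, and show $\bar b_j \lqq \bar b_i$ strictly whenever $j < i$. Strictness comes from the witness $\bar b_i'$: it is $\phi$-related to $\bar b_i$ in one direction but not the other, and lies on the appropriate side of the type equality.

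The hard part, I expect, will be the two bookkeeping issues intertwined: (a) ensuring that a \emph{single} formula $\phi$ and a \emph{single} orientation of the split can be maintained uncountably often — this needs the $\omega_1 \to (\omega_1)$-flavored pigeonhole on a countable set of (formula, orientation) pairs, and one must be slightly careful that the ``accumulated set'' $A_i$ genuinely witnesses splitting by \emph{that} $\phi$ at uncountably many $i$, which may require passing to a club or stationary set of stages; and (b) getting the definable partial order to be honest — antisymmetry and transitivity of $\lqq$, and the passage from the quasi-order $\lqq$ to a strict order $<$, follow as in Theorem \ref{Pr1}, but one must verify the compactness step producing $\psi(t) \in p$ goes through \emph{over $M$} rather than over a monster, since here $p \in S_1(M)$ and $M$ is not saturated. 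I would handle (b) by working with the global non-splitting-controlled extension or, more elementarily, by noting that the relevant inconsistency statement — that $p\,|\,A_i \cup \{\phi(t,\bar b_i)\} \cup \{\neg\phi(t,\bar b_i')\}$ is inconsistent — is a statement about $M$ itself (co-countability: if it were consistent, $\phi(M,\bar b_i) \setminus \phi(M,\bar b_i')$ would be co-countable, contradicting $\phi(x,\bar b_i') \in p$), which is exactly the quasiminimal analogue of the regularity step and sidesteps saturation. The chain $\{\bar b_i : i < \omega_1\}$ is then strictly $<$-increasing and uncountable, as required.
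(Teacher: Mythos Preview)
Your approach has a genuine gap. You try to adapt the quasi-order construction from Theorem \ref{Pr1}, but that argument relied essentially on \emph{regularity} of $p$ to compare witnesses at different stages: in Theorem \ref{Pr1}, if $d$ realizes $p|A_0$ with $\models\phi(d,a)$, regularity forces $b \models p|(A_0,a,d)$ and hence $\models\phi(d,b)$. Here there is no regularity available, and you give no mechanism for establishing $\bar b_j \lqq \bar b_i$ when $j<i$; the splitting witnesses at different stages are a priori unrelated. Your attempt in (b) to fill this with quasiminimality is also confused: the set $p|A_i \cup \{\phi(t,\bar b_i)\} \cup \{\neg\phi(t,\bar b_i')\}$ you claim is inconsistent is in fact a subset of $p$ (you arranged $\phi(x,\bar b_i)\in p$ and $\neg\phi(x,\bar b_i')\in p$), hence trivially consistent --- and in any case this concerns only $\bar b_i$ versus $\bar b_i'$, not the comparison $\bar b_j$ versus $\bar b_i$ that you actually need.

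The paper's argument is quite different and sidesteps this entirely. From ``not countably based'' it first extracts that $p$ is not finitely satisfiable in any countable subset of $M$ (finite satisfiability in a countable $M_0$ would give non-splitting over $M_0$). One then builds a continuous elementary chain $(M_i : i<\omega_1)$ of countable $\ccl$-closed submodels, and at each stage chooses $\bar a_i$ and $\phi_i(x,\bar m_i,\bar a_i)\in p$ with $\bar m_i\in M_i$ having no realization in $M_i$. Then $\neg\phi_i(M,\bar m_i,\bar a_i)$ is a countable definable set containing $M_i$, and since $M_{i+1}$ is $\ccl$-closed and contains $M_i\bar a_i$, one gets the sandwich $M_i \subseteq \neg\phi_i(M,\bar m_i,\bar a_i) \subseteq M_{i+1}$. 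Stabilization of the formula is by countability of $L$ as you say, but the parameters $\bar m_i\in M_i$ range over an uncountable set and require Fodor's Pressing Down Lemma (applied on the club $\{\alpha : M_\alpha = \alpha\}$ after identifying the universe with $\omega_1$) to pin down a single $\bar m$ on a stationary $S$. The partial order is then plain inclusion of definable sets, and $(\neg\phi(M,\bar m,\bar a_i) : i\in S)$ is strictly increasing by the sandwich. No analogue of the Theorem \ref{Pr1} compactness/regularity step is needed.
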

\begin{proof}
Suppose  that $p$ is not countably based. Then   $p$ is finitely
satisfiable in no countable subset of $M$ and, inductively, we can
find a sequence $(M_i\,|\,i<\omega_1)$ of countable $\ccl$-closed
submodels  and a sequence $(\bar{a}_i\,|\,i<\omega_1)$ of tuples
such that for all $i<\omega_1$:
\begin{enumerate}
\item \ \ $M_i\prec M_{i+1}$ and $M_{\alpha}=\cup_{j<\alpha} M_j$
\ for $\alpha<\omega_1$ a limit ordinal;

\item \ \ $\bar{a}_i\in M_{i+1}$;

\item \ \ $p\,|\,M_i\bar{a}_i$ is not finitely satisfiable in
$M_i$.
\end{enumerate}
For each $i<\omega_1$ witness (3) by an $L$-formula
$\phi_i(x,\bar{y},\bar{z})$ and $\bar{m}_i\in M_i$ such that
$\phi_i(x,\bar{m}_i,\bar{a}_i)\in p(x)$ is not satisfied in $M_i$.
Thus $\neg\phi_i(M,\bar{m}_i,\bar{a}_i)$ is countable and contains
$M_i$. Since $M_{i+1}\supset M_i\bar{a}_i$ is $\ccl$-closed we
have:
\begin{center}
$M_i\subseteq\neg\phi_i(M,\bar{m}_i,\bar{a}_i)\subseteq M_{i+1}$ .
\end{center}
We will now find uncountable
$S\subset\omega_1$, and $\phi(x,\bar{y},\bar{z})\in L$ and $\bar{m}$ from $M$,
such that for all $i\in S$ such that for all $i\in S$
$\phi_i(x,\bar{y},\bar{z}) = \phi(x,\bar{y},\bar{z})$ and $\bar{m}_i = \bar{m}$: Without
loss of generality assume that the universe of $M$ is $\omega_1$. Then
$C=\{\alpha\in\omega_1\,|\,M_{\alpha}=\alpha\}$ is a club subset
of $\omega_1$. By the Pressing Down Lemma    the function \
$\alpha\mapsto\bar{m}_{\alpha}$ \ is constant on a stationary
$S_1\subset C$, so the $\bar{m}_i$'s are the same for all $i\in S_1$
(say $\bar{m}$). Since there are only countably many possibilities
for the $\phi_i$ there is uncountable $S\subset S_1$ such that
$\phi_i$'s are the same for all $i\in S$ (say $\phi$). The family \
$(\neg\phi(M,\bar{m},\bar{a}_i)\,|\,i\in S\,)$ \ is  a strictly increasing
chain of definable subsets of  $M$, yielding the Theorem.
\end{proof}

As we see now the existence of a definable group operation on a quasiminimal structure has
strong (but easy) consequences.

\begin{thm}\label{Tqg}
Suppose that   $M$ is a  quasiminimal group. Then $p(x)$ is
definable over $\emptyset$, both left and right translation
invariant, and $\bar{M}$ is a regular group, in the sense of
Definition \ref{D5}, witnessed by $\bar{p}(x)$, where ${\bar p}$
is the unique global heir of $p$.
\end{thm}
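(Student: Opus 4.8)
The plan is to establish, in turn, that $p$ is definable over $\emptyset$, that it is left and right translation invariant, and that $(\bar p(x),x=x)$ is strongly regular --- the last being, by Definition \ref{D5}, exactly the statement that $\bar M$ is a regular group witnessed by $\bar p$ --- with only the third requiring real work. The first two are cheap: left and right translations of $M$ (indeed, any definable bijection of $M$) send countable sets to countable sets, hence co-countable sets to co-countable sets, so $p$ is invariant under them. For definability I would use the \emph{covering criterion}: a definable set $D=\phi(M,\bar b)$ is co-countable iff two of its left translates cover $M$ --- if the complement $E$ of $D$ is countable then, $M$ being uncountable, one can choose $g\in M\setminus EE^{-1}$, so that $gE\cap E=\emptyset$ and hence $D\cup g\cdot D=M$; conversely, two translates of a countable set never cover $M$. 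Since by quasiminimality $\phi(M,\bar b)$ is always countable or co-countable, for $\bar b$ from $M$ the condition $\phi(x,\bar b)\in p$ is equivalent to the $\emptyset$-formula $d_\phi(\bar b):=\exists g\,\forall x\,(\phi(x,\bar b)\vee\phi(g^{-1}\cdot x,\bar b))$, so $p$ is definable over $\emptyset$; its unique global heir $\bar p$, obtained by reading this schema over $\bar M$, is then $\emptyset$-invariant, again bi-translation-invariant, and satisfies the same covering criterion over $\bar M$.

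It remains to prove $(\bar p(x),x=x)$ strongly regular. I would fix a small $B$ and $a\in\bar M$ with $a\not\models\bar p|B$, i.e. $a\in\cl_{\bar p}(B)$, and show $\bar p|B\vdash\bar p|Ba$. Pick $\phi(x,\bar b)$, $\bar b$ from $B$, witnessing $a\in\cl_{\bar p}(B)$, so $\models\phi(a,\bar b)$ and $\phi(x,\bar b)\notin\bar p$. Let $c\models\bar p|B$; suppose toward a contradiction that $c\not\models\bar p|Ba$, so there are $\bar b'$ from $B$ and a formula $\psi(x,y,\bar b')$ with $\models\psi(c,a,\bar b')$ but $\psi(x,a,\bar b')\notin\bar p$. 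Let $U(x;\bar b,\bar b')$ be the $B$-definable formula $\exists y\,(\phi(y,\bar b)\wedge\psi(x,y,\bar b')\wedge\neg d_\psi(y,\bar b'))$, where $d_\psi(y,\bar b')$ is the covering-criterion formula asserting $\psi(\bar M,y,\bar b')\in\bar p$; then $\models U(c;\bar b,\bar b')$, witnessed by $y:=a$. The crux is that $U(x;\bar b,\bar b')\notin\bar p$, and I would obtain this by transfer: with $d_\phi$ and $d_U$ the covering-criterion formulas attached to $\phi$ and to $U$, the first-order sentence $\forall\bar b\,\forall\bar b'\,(\neg d_\phi(\bar b)\to\neg d_U(\bar b,\bar b'))$ holds in $M$, because there $\neg d_\phi(\bar b)$ says $\phi(M,\bar b)$ is countable, and then $U(M;\bar b,\bar b')$, being the union of the countably many countable sets $\psi(M,y,\bar b')$ as $y$ runs over $\{y\in\phi(M,\bar b)\,:\,\psi(M,y,\bar b')\text{ countable}\}$, is countable, i.e. $\neg d_U(\bar b,\bar b')$ holds. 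Hence that sentence holds in $\bar M$ too; since $\phi(x,\bar b)\notin\bar p$ is precisely $\neg d_\phi(\bar b)$ in $\bar M$, we get $U(x;\bar b,\bar b')\notin\bar p$, so $\neg U(x;\bar b,\bar b')\in\bar p|B$, and thus $c\models\bar p|B$ gives $\models\neg U(c;\bar b,\bar b')$ --- contradicting $\models U(c;\bar b,\bar b')$. Therefore $\bar p|B\vdash\bar p|Ba$, $(\bar p(x),x=x)$ is strongly regular, and $\bar M$ is a regular group witnessed by $\bar p$.

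The one genuinely delicate step is this transfer from $M$ to $\bar M$; it works only because, in a quasiminimal \emph{group}, ``countable'' --- equivalently ``$\notin\bar p$'' --- is uniformly first-order expressible, as ``not covered by two left translates'', so closure-theoretic facts about $M$ turn into first-order sentences holding in $\bar M$. One could equally package the last paragraph as follows: $\cl_p$ is a closure operator on $M$ --- which is already contained in Theorem \ref{Tbased}, since $p$, being definable, is based on $\emptyset$ --- and, by the same computation, so is $\cl_{\bar p}$ on $\bar M$, whence strong regularity of $(\bar p(x),x=x)$ follows from Lemma \ref{Lsym}(i). I expect this transfer computation to be the only real obstacle; everything else is routine.
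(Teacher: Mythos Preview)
Your proposal is correct. Your covering criterion (two left translates of a co-countable set cover $M$) is a minor variant of the paper's ($X$ uncountable iff $X\cdot X=M$), and both yield $\emptyset$-definability of $p$ in the same way. For strong regularity, the paper takes exactly the route you sketch in your final paragraph: since $p$ is definable it is based on $\emptyset$, so Theorem \ref{Tbased} gives that $\cl_p$ is a closure operator on $M$; then ``definability of $p$ implies $\cl_{\bar p}$ is also a closure operator'', and Lemma \ref{Lsym}(i) finishes. Your primary argument --- showing $\bar p|B\vdash\bar p|Ba$ directly via the transfer sentence $\forall\bar b\,\bar b'\,(\neg d_\phi(\bar b)\to\neg d_U(\bar b,\bar b'))$ --- is not a different idea but rather an explicit unpacking of that one-line transfer step, which the paper asserts without justification. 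So your write-up is in fact more detailed than the paper's at the one point that matters; the underlying content (a countable union of countable sets is countable, made first-order by the covering criterion) is identical.
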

\begin{proof} Let $X\subseteq M$ be definable. First we claim that
$X$ is uncountable \ iff \  $X\cdot X=M$. \ If $X$ is uncountable,
then $X$ is co-countable, as is $X^{-1}$. So for any $a\in M$,
$a\cdot X^{-1}$ is co-countable, so has nonempty intersection with
$X$. If $d\in X\cap a\cdot X^{-1}$ then $a\in X\cdot X$, proving
the claim.

It follows   that  $p(x)$ is definable over $\emptyset$. In
particular   it  is  based on $\emptyset$ and, by Theorem
\ref{Tbased}, $\cl_p$ is a closure operator on $M$. The
definability of $p$ implies that $\cl_{\bar{p}}$ is also a closure
operator and $(\bar{p}(x),x=x)$ is strongly regular by Lemma
\ref{Lsym}(i). The rest follows from Theorem \ref{Tg}.
\end{proof}

Several examples of quasiminimal structures with ``bad" properties
are given in \cite{ITW}. For example  $\omega_{1}\times\mathbb{Q}$
equipped with the lexicographic order is quasiminimal, its generic
type is definable over $\emptyset$, but which is non symmetric in
the sense that Case (2) of Theorem \ref{Tbased} holds. We give
here some other examples, including algebraic ones.

\begin{exm}\label{field} A quasiminimal field with countably based ``generic type",
which is ``asymmetric":  \ In fact, every   strongly minimal
structure $M$ of size $\aleph_1$ can be expanded to such a
quasiminimal structure. \ Let again $I=\omega_1\times \mathbb{Q}$
and let $\vartriangleleft$ be the strict lexicographic order on
$I$. Further, let $B=\{b_i\,|\,i\in I\}$ be a basis of the
strongly minimal structure $M$. For each $a\in M$ let $i\in I$ be
$\vartriangleleft$-maximal for which there are $i_1,...,i_n\in I$
such that $a\in\acl(b_{i_1},...,b_{i_n},b_i)\setminus
\acl(b_{i_1},...,b_{i_n})$;   Clearly, $i=i(a)$ is uniquely
determined. \ Now, expand $(M,..)$ to $(M,<,...)$ where $b< c$ iff
$i(b) \vartriangleleft  i(c)$. \ We will prove that $(M,<,...)$ is
quasiminimal. Suppose that $M_0\prec M$ is a countable,
$<$-initial segment of $M$ and that  $B\setminus M_0$ does not
have $\leq$-minimal elements, and let $a,a'\in M\setminus M_0$.
Then there is an automorphism of $(B,\vartriangleleft)$   fixing
$B\cap M_0$ pointwise   and moving $b_{i(a)}$ to $b_{i(a')}$. It
easily extends to an $M_0$-automorphism of $(M,<,...)$, so \
$b_{i(a)}\equiv b_{i(a')}(M_0)$ (in the expanded structure).\,
Note that replacing $b_{i(a)}$ by $a$ in $B$ (in the definition of
$<$) does not affect $<$, so \ $a\equiv a'(M_0)$ and there is a
single 1-type over $M_0$ realized in $M\setminus M_0$. Since every
countable set is contained in an $M_0$ as above, $(M,<,...)$ is
quasiminimal. The remaining details are left to the reader.
\end{exm}

\begin{Qst} \ Is every quasiminimal field algebraically
closed?\end{Qst}

The following is an example of a quasiminimal structure, which is very far from
being regular: \ $\cl_p(A)\neq \cl_p(\cl_p(A))$ for arbitrarily large countable
$A$'s. In particular the generic type is not countably based.

\begin{exm}\label{E4} A quasiminimal structure where
$\cl_p\neq\ccl$: \ Peretyatkin in \cite{Per} constructed an
$\aleph_0$-categorical theory of a 2-branching tree. Our
quasiminimal structure will a model of this theory. The language
consists of a single binary function symbol $L=\{\inf\}$. Let
$\Sigma$ be the class of  all finite $L$-structures $(A,\inf)$
satisfying:
\begin{enumerate}  \item  \  $(A,\inf)$ is a semilattice;

\item   \  $(A,<)$ is a tree \, (where \, $x< y$ \, iff \,
$\inf(x,y)=x\neq y$);

\item \  (2-branching) \ \ No three distinct, pairwise
$<$-incompatible elements satisfy: \ \
$\inf(x,y)=\inf(x,z)=\inf(y,z)$.\end{enumerate} Then   the
Fraiss\'{e} limit of $\Sigma$ exists and its theory, call it
$T_2$, is $\aleph_0$-categorical and has unique 1-type. If we
extend the language to $\{\inf,<,\perp\}$, where $x\perp y$ stands
for $x\nleq y \wedge y\nleq x$, then $T_2$ has elimination of
quantifiers.

\smallskip Let $(\bar{M},<)$ be  the monster model of $T_2$,   let
$\triangleleft$ be a lexicographic order on $I=\omega_1\times Q$,
and let $C=\{c_i\,|\,i\in \omega_1\times Q\}$ be $<$-increasing.
Then we can find a sequence of countable models $\{M_i\,|\,i\in
\omega_1\times Q\}$ satisfying:
\begin{enumerate}
\item[(a)] \ \ $M_i\prec M_j$ \ for all $i\vartriangleleft j$ ;

\item[(b)] \ \  $M_i\cap C=\{c_j\in C\,|\,j\trianglelefteq i\}$
for all $i$;

\item[(c)] \ \ $M_i\cap C<M_j\setminus M_i$ \ for all
$i\vartriangleleft j$. \end{enumerate}Finally,   let
$M=\bigcup\{M_i\,|\,i\in I\}$. \ Clearly, $C$ is   an  uncountable
branch in $M$. Moreover, by (c), any other branch  is completely
contained in some $M_i$, and is so countable. This suffices to
conclude that $M$ is quasiminimal and that the generic type is
determined by $C<x$.

\smallskip Fix $c_i\in C$ and $a\in M\setminus C$ with $c_i<a$.
Note that  $x\nleq c_i$ is small, so  $M_j\subset\cl_p(c_i)$ for
all $j\vartriangleleft i$. Also,   $x\perp a$ is large so
$\cl_p(a)$ is the union of branches containing $a$.   Since
$c_i\in \cl_p(a)$ we have $M_j\subseteq \cl_p(c_i)\subset
\cl_p^2(a)$;  since $M_j \nsubseteq \cl_p(a)$ we conclude that
$\cl_p(a)\neq\cl_p^2(a)$ and $\cl_p$ is not a closure operator.
Similarly, for any countable $A\subset M$ we can find $a,c_i$ as
above much bigger than $A$,  and thus both realizing $p\,|\,A$.
Then $x\perp a\wedge\neg (x\perp c_i)\in p(x)$ witness that $p(x)$
splits over $A$.
\end{exm}

\section{A general dichotomy theorem}\label{s6}

In this section we outline a dichotomy theorem, which yields
Theorem \ref{Tbased} (the countably based quasiminimal case), and
also subsumes the global strongly regular case (Theorem
\ref{Pr1}). Our set up will also apply to $\kappa$-quasiminimal
structures, for $\kappa$ regular  (defined in the obvious way). We
fix an arbitrary model $N$ (allowing the possibility that $N =
\bar M$), and a complete $1$-type $p(x)$ over $N$. The operator
$\cl_{p}$ defined on subsets of $N$ may not be a closure operator,
but generates one:

\begin{defn}
For $X\subset N$ define: \
$\Cl_p(X)=\bigcup\{\cl_p^n(X)\,|\,n\in\omega\}$ \ where  \
 $\cl_p^0(X)= X$, \
$\cl_p^{n+1}(X)=\cl_p(\cl_p^{n}(X))$.
\end{defn}

Note that $\Cl_p$ is a closure operator and that $\cl_p$ is a
closure operator if and only if $\cl_p=\Cl_p$. If $N$ is
quasiminimal and $p$ is the generic type, then $\Cl_p$ agrees with
$\ccl$. However, in the general case we can easily have
$\Cl_p(\emptyset)=N$ which is not interesting at all; the
interesting case is when $N$ is `infinite dimensional'.

\begin{defn}$A\subseteq N$ is  {\em finitely $\Cl_p$-generated over
$B\subset N$}   if there is a  finite $\bar{a}\subset A$ such that
$A\subseteq\Cl_p(B\bar{a})$; \ if $B=\emptyset$ then we simply say
that $A$ is finitely $\Cl_{p}$-generated.
\end{defn}

The interesting case for us is when  $N$ itself is not finitely
$\Cl_p$-generated, which {\em  will be assumed from now on}. This
is already a weak regularity  assumption on $p$, as we will see in
the next section where it is proved that   $p$ is `locally
strongly regular'. The `relative smallness' of $A\subset N$ is
expressed  by: $N$ is not finitely $\Cl_p$-generated over $A$.

\begin{defn}   A sequence $\{a_i\,|\,i\in\alpha\}$ is
\emph{$\cl_p$-free over $B\subset N$} if  for all $i\leq\alpha$ \
$a_i\notin\cl_p(B\cup\{a_i\,|\,j<i\})$; \ $\cl_p$-free means
$\cl_p$-free over $\emptyset$. \ Similarly   {\em $\Cl_p$-free}
sequences are defined.
\end{defn}

\begin{defn}
$p$ is {\em based on} $A\subset N$ if  $p$ does not split over
$A$, and $N$ is not finitely $\Cl_p$-generated over $A$.
\end{defn}

Note that $\Cl_p$-free sequences are also $\cl_{p}$-free.
Moreover, if $A\subset N$ and $p$ does not split over $A$ then
every $\cl_p$-free sequence  over (any domain containing) $A$ is
indiscernible, by the standard argument.

\begin{lem}\label{Lclosure}
  Suppose that  $p(x)$ is  based on $A$. Then

\begin{enumerate}
\item[(i)]     $\cl^A_p$ is a closure operator on $N$.

\item[(ii)]     $(N,\cl^A_p)$ is a pregeometry iff every
$\cl_p$-free sequence over $A$ is totally indiscernible.
\end{enumerate}
\end{lem}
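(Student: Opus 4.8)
The plan is to treat the two parts in turn, with part (i) supplying the closure operator that is then used to localize the exchange property in part (ii).

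For (i), monotonicity and finite character of $\cl^A_p$ are immediate: a larger parameter set gives a larger restriction $p\,|\,-$, so ``$a$ fails $p\,|\,X$'' is preserved, and a single formula witnessing $a\in\cl^A_p(X)$ uses only finitely many parameters from $X$. That $A\cup X\subseteq\cl^A_p(X)$ follows from $p$ being non-algebraic, which in turn follows from the standing assumption that $N$ is not finitely $\Cl_p$-generated (if $p$ were algebraic already $\Cl_p(\emptyset)=N$). The content is idempotence, i.e. $\cl_p(\cl_p(B))=\cl_p(B)$ for $B\supseteq A$. Since any set $A\cup(\text{finite})$ is again a base for $p$ (non-splitting passes to larger sets, and ``$N$ not finitely $\Cl_p$-generated over $A$'' forces the same over $B$), finite character reduces this to: if $B$ is a base and $c\in\cl_p(B)$ then $\cl_p(B\cup\{c\})=\cl_p(B)$, equivalently, $a\models p\,|\,B$ implies $a\models p\,|\,(B\cup\{c\})$. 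I would prove this by contradiction: suppose $\phi(x,c)\in p$ with $\phi$ over $B$ but $\models\neg\phi(a,c)$, and pick $\chi\in p\,|\,B$ with $\models\neg\chi(c)$ witnessing $c\in\cl_p(B)$. The point of basedness I would exploit is that, since $N$ is not finitely $\Cl_p$-generated over $B$, one has $\cl_p^2(B)\subsetneq\Cl_p(B)\subsetneq N$, so there is $a^*\in N$ realizing $p\,|\,\cl_p(B)$. Using that $p$ does not split over $B$ (so ``$\phi(x,d)\in p$'' depends only on $\tp(d/B)$) one pushes the formula $\exists y(\neg\chi(y)\wedge\neg\phi(x,y))\in p$ — and, more carefully, the whole family $\{\exists y(\rho(y)\wedge\neg\phi(x,y))\,:\,\rho\in\tp(c/B)\}$ — onto the very generic $a^*$, so as to locate a realization of $\tp(c/B)$ that is small over $B$ yet on which $\phi$ fails at $a^*$, contradicting $\phi(x,-)\in p$. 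I expect the main obstacle to be exactly this last step: because $N$ need not be saturated or strongly homogeneous, transferring a type over $B\cup\{a\}$ to one over $B\cup\{a^*\}$ has to be handled with care — most cleanly by first extending $p$ to a global type $\bar p$ over the monster model still not splitting over $A$, carrying out the type-transfer there, and then observing that the resulting contradiction is witnessed by formulas over $B$, hence already holds in $N$.

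For (ii), the ``only if'' direction: once $(N,\cl^A_p)$ is a pregeometry it is automatically a \emph{homogeneous} pregeometry in the sense of Definition \ref{Dhom}, since for finite $B\subseteq N$ the set of $a\notin\cl^A_p(B)$ is precisely the set of realizations of the complete type $p\,|\,(A\cup B)$, and it is infinite-dimensional (otherwise $N=\cl^A_p(\bar e)\subseteq\Cl_p(A\bar e)$ for a finite $\bar e$, contradicting basedness). In a homogeneous pregeometry an $n$-tuple independent over $A$ has a type over $A$ determined by independence alone, built up coordinate by coordinate via the unique generic type over each intermediate set as in Lemma \ref{L34}; since any permutation of an independent sequence is again independent, it follows that every $\cl_p$-free sequence over $A$ is totally indiscernible over $A$. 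For the ``if'' direction I would follow the proof of Lemma \ref{Lsym}(ii): using part (i), it suffices to verify exchange over a finite base $B=A\cup\bar d$; pick a maximal $\cl_p$-free sequence $(a_1,\dots,a_n)$ over $A$ inside $B$ (finite, as $B\setminus A$ is), so that $\cl^A_p(B)=\cl^A_p(a_1,\dots,a_n)$ by maximality and idempotence, and for $a\models p\,|\,B$ and $b\in N$ run the chain ``$b\notin\cl^A_p(Ba)$ iff $(a_1,\dots,a_n,a,b)$ is $\cl_p$-free over $A$ iff (by total indiscernibility) $(a_1,\dots,a_n,b,a)$ is $\cl_p$-free over $A$ iff $b\notin\cl^A_p(B)$ and $a\notin\cl^A_p(Bb)$'', which gives exchange. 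Here one uses that ``being $\cl_p$-free over $A$'' is a property of the type over $A$ (by non-splitting), so total indiscernibility genuinely allows the swap of $a$ and $b$.
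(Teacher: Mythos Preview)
Your treatment of part (ii) is fine and matches the paper's: the ``if'' direction is exactly the argument from Lemma~\ref{Lsym}(ii) transplanted to $N$, and your ``only if'' is correct (in fact slightly more than the paper spells out).

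The problem is part (i). Your reduction to ``$c\in\cl_p(B)$ and $a\models p\,|\,B$ imply $a\models p\,|\,Bc$'' is fine, but the contradiction you sketch does not close. You want a $c'\in N$ with $\tp(c'/B)=\tp(c/B)$ and $\models\neg\phi(a^*,c')$; the family $\{\exists y(\rho(y)\wedge\neg\phi(x,y)):\rho\in\tp(c/B)\}$ only gives you, for each finite $\rho$, some $c'_\rho\in N$ with $\rho(c'_\rho)\wedge\neg\phi(a^*,c'_\rho)$, and $N$ need not be saturated enough to thread these into a single $c'$. Your proposed fix --- pass to the monster and extend $p$ to a global $\bar p$ still non-splitting over $A$ --- has two difficulties. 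First, such an extension is not available for free: $N$ need not realize every type over $A$, so the non-splitting schema of $p$ does not automatically define a global type, and you give no argument that one exists. Second, and more seriously, even granting $\bar p$, the argument is circular: the $c'$ you obtain lives in $\bar M$, not in $\cl_p(B)\subseteq N$, so to conclude $a^*\models\bar p\,|\,(B,c')$ you would need $a^*$ to realize $\bar p$ over $\cl_{\bar p}(B)$, not just $p$ over $\cl_p(B)$. Knowing that $a^*\models p\,|\,\cl_p(B)$ forces $a^*\models\bar p\,|\,\cl_{\bar p}(B)$ is essentially the idempotence of $\cl_{\bar p}$, which is what you are trying to prove.

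The paper's argument avoids this trap by never trying to realize a full type. Assuming $a\in\cl_p^2(C)\setminus\cl_p(C)$ (your $a$, with $C=B$), it picks \emph{two} elements $a_1,a_2\in N$ that are $\Cl_p$-free over $Ca$, witnesses $a\in\cl_p^2(C)$ by $\varphi(x,\bar b)\notin p$ with $\bar b\in\cl_p(C)^n$ and formulas $\theta_i\notin p$ isolating each $b_i$ inside $\cl_p(C)$, and forms the $C$-formula
\[
x_1\equiv_\varphi x_2 \ \ :=\ \ (\forall\bar y)\Big(\bigwedge_i\theta_i(y_i)\rightarrow(\varphi(x_1,\bar y)\leftrightarrow\varphi(x_2,\bar y))\Big).
\]
Because the universal quantifier ranges only over tuples in $\cl_p(C)$, and $a_1,a_2$ both realize $p\,|\,\cl_p(C)$, one gets $a_1\equiv_\varphi a_2$; but $\bar b$ itself witnesses $a\not\equiv_\varphi a_2$ (since $\varphi(a,\bar b)$ holds while $a_2\notin\Cl_p(C)$ forces $\neg\varphi(a_2,\bar b)$). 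Thus $(a,a_1,a_2)$ is $\cl_p$-free over $C$ but not $C$-indiscernible, contradicting non-splitting. The point is that the \emph{universal} quantifier in $\equiv_\varphi$ does the work your existential-plus-type-realization argument cannot: it compares $a_1,a_2$ simultaneously against every small tuple, with no need to locate a particular $c'$ of a prescribed type inside $N$.
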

\begin{proof} Without loss of generality  assume $A=\emptyset$.

\smallskip
(i)    Assuming  that $\cl_p$ is not a closure operator we will
find a non-indiscernible $\cl_p$-free sequence over $C$ for some
finite $C\subset N$, which is in contradiction with non-splitting
over $\emptyset$. So suppose that $\cl_p$ is not a closure
operator. Then there are a finite $C\subset N$ and $a\in N$ such
that $a\in\cl_p^2(C)\setminus\cl_p(C)$. Since $a\notin \cl_p(C)$
we have $a\models p\,|\,C$ so, since $N$ is not finitely
$\Cl_p$-generated, there are     $a_1,a_2\in N$   such that
$(a_1,a_2)$ is a $\Cl_p$-free sequence over $Ca$. We will prove
that $(a,a_1,a_2)$    is not indiscernible over $C$; since it is
$\cl_p$-free over $C$  we have a  contradiction as $p$ does not
split over $C$.

Witness $a\in\cl_p^2(C)$ by a     formula
$\varphi(x,\bar{b})\in\tp(a/C\bar{b})$ which is not in $p(x)$,
where $\varphi(x,y_1,...,y_n)$ is over $C$ and
$(b_1,...,b_n)=\bar{b}\in \cl_p(C)^n$. Choose
$\theta_i(y_i)\in\tp(b_i/CA)$ witnessing $b_i\in\cl_p(C)$ (i.e.
$\theta_i(y_i)\notin p(y_i)$) and let \ \ $x_1\equiv_{\varphi}x_2$
\ denote the formula
\begin{center}
$(\forall \bar{y})\left(\bigwedge_{1\leq i\leq
n}\theta_i(y_i)\rightarrow (\varphi(x_1,\bar{y})\leftrightarrow
\varphi(x_2,\bar{y}))\right)$.\end{center} It is, clearly, over
$C$ and we show $a\nequiv_{\varphi}a_2$: \ from $\models
\varphi(a,\bar{b})$ (witnessing $a\in\cl_p^2(C)$) and
$a_2\notin\Cl_p(C)$ we derive $\models\neg\varphi(a_2,\bar{b})$
and thus $\bar{b}$ witnesses $a\nequiv_{\varphi}a_2$. \ On the
other hand, since all realizations of $\theta_i$'s  are in
$\cl_p(C)$, and since $\tp(a_1/\cl_p(C))=\tp(a_2/\cl_p(C))$, we
have $a_1\equiv_{\varphi}a_2$. Therefore $(a,a_1,a_2)$ is not
indiscernible over $C$.

\smallskip (ii)   Having proved (i), the proof of Lemma
\ref{Lsym}(ii) goes through: \  Let $A_0$ be finite, let
$(a_1,...,a_n)\in A^n_0$ be $\cl_p$-free  over $\emptyset$  such
that $\cl_p(A_0)= \cl_p(a_1,..,a_n)$,    let $a\models p\,|\,A_0$
and let $b\in N$. Then  $(a_1,...,a_n,a)$ is $\cl_p$-free over
$\emptyset$ and $\cl_p(aA_0)=\cl_p(a_1,...,a_n,a)$. We have:
\begin{center} $b\notin\cl_p(aA_0)$ \ iff \ $b\notin \cl_p(a_1,...,a_n,a)$
\ iff \  $(a_1,...,a_n,a,b)$ is   $\cl_p$-free   over $\emptyset$
\ iff \ $(a_1,...,a_n,b,a)$ is   $\cl_p$-free over $\emptyset$  \
iff \ ($b\notin\cl_p(A_0)$ and $a\notin\cl_p(A_0b)$).\end{center}
\end{proof}

\begin{thm}\label{Tgeneral}
Suppose that   $p(x)$ is   based on $A\subset N$. Then $\cl^A_p$
is a closure operator and exactly one of the following two holds:

\begin{enumerate}
\item Every $\cl_p$-free  sequence   over $A$ is totally
indiscernible; \ in this case $(N,\cl^A_p)$ is a homogeneous
pregeometry, $p$ is definable over $A$, $\bar{p}$ (its unique
global heir) is generically stable and $(\bar{p}(x),x=x)$ is
strongly regular.

\item  Otherwise. In which case there is a finite extension $A_0$
of $A$ and an $A_0$-definable partial order $\leq$ such that every
$\cl_p$-free sequence over $A_0$ is strictly increasing.
\end{enumerate}
\end{thm}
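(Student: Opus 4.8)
\textbf{Proof proposal for Theorem \ref{Tgeneral}.}

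The plan is to follow the architecture of the proof of Theorem \ref{Pr1}, but replacing "Morley sequence in the global invariant type" by "$\cl_p$-free sequence over $A$" throughout, and using the nonsplitting and the "not finitely $\Cl_p$-generated over $A$" hypotheses at exactly the points where invariance and non-algebraicity were used before. Lemma \ref{Lclosure} already does the heavy lifting: part (i) gives that $\cl^A_p$ is a closure operator, and part (ii) tells us that the pregeometry property is equivalent to total indiscernibility of $\cl_p$-free sequences over $A$. So the dichotomy itself is immediate: either every $\cl_p$-free sequence over $A$ is totally indiscernible, or some such sequence is not. The work is in extracting the extra conclusions in each case. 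Throughout I would reduce to $A = \emptyset$.

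\emph{Case (1): every $\cl_p$-free sequence over $A$ is totally indiscernible.} First, $(N,\cl^A_p)$ is a pregeometry by Lemma \ref{Lclosure}(ii); it is infinite-dimensional since $N$ is not finitely $\Cl_p$-generated over $A$; and it is homogeneous essentially by construction, since nonsplitting of $p$ over $A$ forces $\{a \in N : a \notin \cl_p(AB)\}$ to be the realizations in $N$ of a single complete type over $AB$ for each finite $B$ (any two non-closure elements over $AB$ realize the same type, because $p\restriction AB$ is the unique completion of $p\restriction A$ they can satisfy and $p$ doesn't split). For definability of $p$ over $A$: fix an infinite $\cl_p$-free sequence over $A$ (which exists because $N$ is not finitely $\Cl_p$-generated over $A$, and in fact is $\cl_p$-free of every finite length, hence — being totally indiscernible — extends) and run the argument of Lemma \ref{L35}: a formula $\phi(x,\bar b)$ with $\bar b$ from $N$ lies in $p$ iff at least $n+1$ of the first $2n+1$ terms satisfy it, using Remark \ref{R32}(ii) (at most $n$ terms of an $\emptyset$-independent set can fall in $\cl_p(\bar b)$). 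This gives a defining schema over the sequence; nonsplitting then pushes it down to a schema over $A$ exactly as in Proposition \ref{GS}(ii). Once $p$ is definable over $A$ it has a unique global heir $\bar p$, and $\bar p$ is $A$-invariant (its defining schema is $A$-definable). Then $(\bar M, \cl_{\bar p})$ is a homogeneous pregeometry whose restriction to $N$ is $\cl^A_p$ — this is the content of Proposition \ref{Ppr}(i) applied with $M$ replaced by $N$ (the proof there only used homogeneity of the pregeometry and definability of the generic type, both of which we now have). Finally, generic stability of $\bar p$ follows from Lemma \ref{L35} applied in $\bar M$ together with Definition \ref{D1} exactly as in Proposition \ref{Ppr}(iii) — a Morley sequence in $\bar p$ over $A$ is the same as an infinite $\cl_{\bar p}$-free set, and Lemma \ref{L35} bounds the number of terms satisfying any formula — and strong regularity of $(\bar p(x), x=x)$ follows from Lemma \ref{Lsym}(i) since $\cl^A_{\bar p}$ is a pregeometry (in particular a closure operator).

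\emph{Case (2): some $\cl_p$-free sequence over $A$ is not totally indiscernible.} Then there is a $\cl_p$-free pair $(a,b)$ over some finite $A_0 \supseteq A$ with $\tp(a,b/A_0) \neq \tp(b,a/A_0)$; by nonsplitting over $A$ we may in fact take $A_0 = A \cup (\text{finitely many terms})$, and since $(a,b)$ is $\cl_p$-free over $A_0$ both $a \models p\restriction A_0$ and $b \models p\restriction A_0 a$. Pick $\phi(x,y) \in \tp(a,b/A_0)$ with $\models \phi(x,y) \to \neg\phi(y,x)$; then $\phi(x,a) \notin p$. I claim $(p\restriction A_0)(t) \cup \{\phi(t,a)\} \cup \{\neg\phi(t,b)\}$ is inconsistent: if $d$ realizes it, then $d \notin \cl_p(A_0 a)$ is \emph{false} (witnessed by $\phi(t,a)$, since $\phi(t,a) \notin p$ means $\phi(N,a)$ is $\cl_p$-small over... wait — this needs care) — more precisely, $\phi(t,a) \notin p$ means $\neg\phi(t,a) \in p$, so any $d$ with $\models\phi(d,a)$ does \emph{not} realize $p\restriction A_0 a$, hence $d \in \cl_p(A_0 a)$. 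But $d \models p\restriction A_0$, so $d$ witnesses $b \in \cl_p(A_0 a d)$... no: the right move is to use that $\cl_p$ is a closure operator (Lemma \ref{Lclosure}(i)) to get exchange-free transitivity, or better, to reprove the inconsistency claim of Theorem \ref{Pr1} using only that $\cl^A_p$ is a closure operator. This is the step I expect to be the main obstacle, because Theorem \ref{Pr1} used regularity ($p\restriction B a \vdash p\restriction B a d$) which we no longer have; instead I would argue: $d \in \cl_p(A_0 a) \subseteq \Cl_p(A_0 a)$ and $d \models p\restriction A_0$, so $\{a,d\}$ is not $\cl_p$-free over $A_0$ in the order $(a,d)$ — but we need it the other way. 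Since $d \models p\restriction A_0$ and $a \models p\restriction A_0$, and $d \in \cl_p(A_0 a)$, exchange is exactly what would give $a \in \cl_p(A_0 d)$, but we don't have exchange in Case (2). The correct route, I believe, is that this entire inconsistency-and-partial-order construction is precisely what the general dichotomy machinery of this section is designed to supply, so I would instead prove the inconsistency by a direct compactness/indiscernibility argument: if the type is consistent, realize it by $d$, build a $\cl_p$-free sequence $(a, d, d_1, d_2, \ldots)$ over $A_0$ where subsequent terms realize $p$ over everything so far; by nonsplitting this sequence is indiscernible over $A_0$, and $\phi(t,a) \notin p$ forces infinitely many $d_i \in \cl_p(A_0 a)$... and then one derives a configuration contradicting that $N$ is not finitely $\Cl_p$-generated over $A_0$, or contradicting indiscernibility directly. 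Once the inconsistency is established, compactness gives $\theta(t) \in p\restriction A_0$ with $\models (\forall t)(\phi(t,a) \wedge \theta(t) \to \phi(t,b))$, and then defining $x \lqq y$ as $(\forall t)(\phi(t,x) \wedge \theta(t) \to \phi(t,y))$ yields an $A_0$-definable quasi-order with $a \lqq b$ and $b \nlqq a$ (from $\models \phi(a,b) \wedge \theta(a) \wedge \neg\phi(a,a)$), exactly as in Theorem \ref{Pr1}; passing to the strict part $x < y$ gives $a < b$, and by indiscernibility (nonsplitting over $A_0$) every $\cl_p$-free sequence over $A_0$ is strictly increasing. The two cases are mutually exclusive since a strictly increasing sequence under a definable order is never totally indiscernible.
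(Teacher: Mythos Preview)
Your Case (1) is fine; it is more verbose than the paper, which simply observes that $(N,\cl^A_p)$ is an infinite-dimensional homogeneous pregeometry and then cites Proposition~\ref{Ppr} wholesale, but your unpacking of that proposition is correct.

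Your Case (2) has a genuine gap, and you correctly locate it: the inconsistency argument from Theorem~\ref{Pr1} used regularity of the global type to pass from ``$d$ does not realize $p\restriction A_0 a$'' to ``$b$ realizes $p\restriction A_0 a d$'', and you have no substitute for that step. Your proposed workaround (build a long $\cl_p$-free sequence and look for a contradiction with indiscernibility or finite generation) is not fleshed out and I do not see how to make it work.

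The paper avoids the inconsistency argument entirely. The missing idea is this: since $\cl^A_p$ is already known to be a closure operator (Lemma~\ref{Lclosure}(i)), the condition $b\models p\restriction A_0 a$ automatically upgrades to $b\models p\restriction \cl_p(A_0 a)$, and likewise $a\models p\restriction \cl_p(A_0)$. This is the replacement for regularity. Now prove directly that $\phi(N,a)\subsetneq\phi(N,b)$. First, $\neg\phi(x,a)\in p$ (since $b\notin\cl_p(A_0a)$ and $\models\neg\phi(b,a)$), so $\phi(N,a)\subseteq\cl_p(A_0 a)$. Take $d\in\phi(N,a)$ and split into two cases. If $d\in\cl_p(A_0)$, then $a$ and $b$ have the same type over $\cl_p(A_0)\ni d$, so $\models\phi(d,a)$ gives $\models\phi(d,b)$. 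If $d\notin\cl_p(A_0)$, then $d$ and $a$ both realize $p\restriction A_0$, so by nonsplitting $(\phi(a,x)\leftrightarrow\phi(d,x))\in p(x)$; since $d\in\cl_p(A_0 a)$ and $b\models p\restriction\cl_p(A_0 a)$, this gives $\models\phi(d,b)$. Properness comes from $a\in\phi(N,b)\setminus\phi(N,a)$. Then $x<y$ is defined by $\phi(N,x)\subsetneq\phi(N,y)$, and nonsplitting propagates $a<b$ to every $\cl_p$-free pair over $A_0$.
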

\begin{proof}
To simplify the notation assume $A=\emptyset$, it will not affect
the generality. First suppose that every $\cl_p$-free sequence
over $\emptyset$ is symmetric. Then, by Lemma \ref{Lclosure}(i),
$\cl_p$ is a closure operator and, by Lemma \ref{Lclosure}(ii), it
is a pregeometry operator. Since $N$ is not finitely
$\cl_p$-generated it is infinite-dimensional so $(N,\cl_p)$ is a
homogeneous pregeometry and the conclusion follows from
Proposition \ref{Ppr}.

\smallskip
Now suppose otherwise.   Then  over some finite $A_{0}$ there is a
$\cl_p$-free sequence $(a,b)$ over $A_0$ such that $\tp(a,b/A_{0})
\neq \tp(b,a)/A_{0}$. So for some $\phi(x,y)$ over $A_{0}$, we
have:

\begin{enumerate}
\item \ $a\models p\,|\cl_p(A_0)$, \ $b\models p\,|\cl_p(A_0, a)$
\ and \ $\models\phi(a,b)$;

\item \ $\models\phi(x,y)\rightarrow \neg\phi(y,x)$.
\end{enumerate}
We {\em claim}     \ $\phi(N,a)\subsetneq \phi(N,b)$. \  To prove
it, first note that $\models \phi(a,b)\wedge\neg\phi(b,a)$ and
$b\notin\cl_p(Aa)$ imply   $\neg\phi(x,a)\in p(x)$, so \
$\phi(N,a)\subseteq\cl_p(A_0a)$. Now suppose   $d\in\phi(N,a)$ and
we will show $d\in\phi(N,b)$. By the above  $d\in\cl_p(A_0a)$. We
have two possibilities for $d$. The first is $d\in\cl_p(A_0)$,
where $a\equiv b \,(\cl_p(A_0))$ and $\models \phi(d,a)$ imply
$\models \phi(d,b)$ and we're done. The second is
$d\notin\cl_p(A_0)$. Then $a$ and $d$ realize $p\,|\,\cl_p(A_0)$
and, since $p$ does not split over $A_0$, we have \
$(\phi(a,x)\leftrightarrow\phi(d,x))\in p(x)$; \ since
$d\in\cl_p(A_0a)$ we have \
$(\phi(a,x)\leftrightarrow\phi(d,x))\in p\,|\,\cl_p(aA_0)$ and,
since $b\models p\,|\,\cl_p(A_0a)$,   we get $\models
\phi(a,b)\leftrightarrow\phi(d,b)$.  Thus $\models\phi(d,b)$. \
This proves $\phi(N,a)\subseteq \phi(N,b)$.

Finally, the asymmetry of $\phi(x,y)$ implies
$\models\neg\phi(a,a)$ so $a\notin\phi(N,a)$ and
$a\in\phi(N,b)\setminus\phi(N,a)$; this proves that the inclusion
is proper.

\smallskip
 `$\phi(N,x)\subsetneq\phi(N,y)$'
is an $A_0$-definable strict-ordering relation $x<y$ on $N$ and we
have $a<b$. \ By non-splitting, $a'<b'$ is true whenever $(a',b')$
is $\cl_p$-free   over $A_0$, so     every $\cl_p$-free sequence
over $A_0$  is increasing.
\end{proof}

\section{Local regularity}\label{s7}
Here we introduce and study ``locally strongly regular types" and
give applications to  quasiminimal structures  (see Corollary
\ref{C4}).

Given an arbitrary model $M$, $p(x)\in S_{1}(M)$
and $\phi(x)\in p$ we can ask when $p$ extends to a global $M$-invariant $\bar p$,
such that $({\bar p},\phi)$ is strongly regular. When $T$ is stable the situation is well understood,
but we are interested in the general case. Our definition of local strong regularity is actually
a necessary condition for such an extension to exist.
It is convenient to work with types over arbitrary sets (not just models).

\begin{defn}A non-isolated type
$p(x)\in S_1(A)$ is \emph{locally strongly regular via}
$\phi(x)\in p(x)$ if $p(x)$ has a unique extension over $A\bar{b}$
whenever $\bar{b}\in\bar{M}$ is a finite tuple of realizations of
$\phi(x)$ no element of which realizes $p$.
\end{defn}

\begin{prop}\label{P03}
Suppose  that $p(x)\in S_1(M)$ is definable  and  locally strongly
regular via $\phi(x)\in p(x)$, and  let $\bar{p}(x)$ be   its
global heir. Then $(\bar{p}(x),\phi(x))$ is strongly
regular (and of course definable).
\end{prop}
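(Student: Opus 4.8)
The plan is to verify the strong regularity condition for $\bar p$ directly from the definition, using the fact that $\bar p$ is the heir of $p$ together with the local strong regularity of $p$. Fix a small set $B\supseteq M$ over which $\phi$ is defined; since $\bar p$ is the heir of the definable type $p$ over $M$, it is $M$-invariant, hence $B$-invariant, so $B$ is a legitimate witness set. Let $B'\supseteq B$ be arbitrary and let $\bar a$ satisfy $\phi(x)$. We must show: either $\bar a\models\bar p\,|\,B'$, or $\bar p\,|\,B'\vdash \bar p\,|\,B'\bar a$. If $\bar a\models \bar p\,|\,B'$ there is nothing to prove, so assume $\bar a\nmodels \bar p\,|\,B'$; in particular $\bar a$ does not realize $\bar p$, and since $\bar a$ is a single realization of $\phi$ not realizing $\bar p$, local strong regularity of $p$ applies to the tuple $\bar a$.

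The key step is then to transfer the uniqueness statement ``$p$ has a unique extension over $M\bar a$'' to ``$\bar p\,|\,B'$ has a unique extension over $B'\bar a$''. First I would use local strong regularity of $p$ to conclude that $p$ has a unique extension $q_{0}$ over $M\bar a$; since $p$ is definable and $\bar p$ is its heir, the obvious candidate for the extension of $\bar p\,|\,B'$ to $B'\bar a$ is the one given by applying the defining schema $d_{p}$ to $B'\bar a$, and I would check that this is consistent and extends $\bar p\,|\,B'$. For uniqueness, suppose $r_{1},r_{2}\in S(B'\bar a)$ both extend $\bar p\,|\,B'$. Restricting to $M\bar a$: each $r_{i}\!\upharpoonright\! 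M\bar a$ extends $p$ (as $r_i$ extends $\bar p\,|\,B'\supseteq \bar p\,|\,M = p$), so by local strong regularity $r_{1}\!\upharpoonright\! M\bar a = r_{2}\!\upharpoonright\! M\bar a = q_{0}$. Now I would argue that $r_{1}=r_{2}$ on all of $B'\bar a$: for any formula $\psi(x,\bar a,\bar c)$ with $\bar c$ from $B'$, whether it lies in $r_{i}$ is controlled by $\tp(\bar c\bar a/M)$ via the non-splitting/heir property, since $r_i$ is forced to be the heir-type of $q_0$ over $B'\bar a$ — indeed $\bar p\,|\,B'$ is the heir of $p$, and an extension of it to $B'\bar a$ restricting to $q_0$ over $M\bar a$ must respect the same defining schema. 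Hence $r_{1}=r_{2}$, giving $\bar p\,|\,B'\vdash \bar p\,|\,B'\bar a$.

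The main obstacle is the last transfer step: showing that an extension of $\bar p\,|\,B'$ to $B'\bar a$ whose restriction to $M\bar a$ equals $q_{0}$ is already determined on all of $B'\bar a$. The definability of $p$ gives a unique heir over any model, but $\bar a$ and $B'$ need not form a model, and $q_0$ need not itself be a heir of $p$ in the naive sense. I expect the right move is to pass to a model $N\supseteq B'$, note that $\bar p\,|\,N$ is the heir of $p$, apply local strong regularity there to get a unique extension $q$ of $\bar p$ (equivalently of $p$, since $q\!\upharpoonright\!M$-type considerations reduce to $p$) over $N\bar a$, and then restrict back down to $B'\bar a$; the uniqueness over the larger model $N\bar a$ forces uniqueness over $B'\bar a$. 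One should double-check that local strong regularity, stated over $M$, indeed lifts to $N$ — this follows because any realization of $\phi$ over $N$ not realizing $\bar p$ still does not realize $p$, and the heir is the unique nonsplitting extension, so the uniqueness of the extension of $p$ over $M\bar a$ propagates to the uniqueness of the extension of $\bar p$ over $N\bar a$. Finally, strong regularity of $(\bar p,\phi)$ over $B$ in the sense of Definition \ref{Dreg}(ii) follows, and definability is inherited from $p$ as noted in Proposition \ref{Ppr}(i)-style arguments.
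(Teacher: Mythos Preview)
There is a genuine gap, and it is precisely at the point you flag as ``the main obstacle''. Your argument needs $\bar a$ to fail to realize $p$ (so that local strong regularity over $M$ applies), but all you have is that $\bar a$ fails to realize $\bar p\,|\,B'$. These are not the same: an element may well realize $p=\bar p\,|\,M$ while failing to realize $\bar p\,|\,B'$ for larger $B'$. So the sentence ``in particular $\bar a$ does not realize $\bar p$, and \dots\ local strong regularity of $p$ applies to the tuple $\bar a$'' is unjustified. The attempted repair --- pass to a model $N\supseteq B'$ and say that local strong regularity ``lifts to $N$'' because ``any realization of $\phi$ over $N$ not realizing $\bar p$ still does not realize $p$'' --- repeats the same error (it is false for the same reason), and the final clause ``the uniqueness of the extension of $p$ over $M\bar a$ propagates to the uniqueness of the extension of $\bar p$ over $N\bar a$'' is exactly the proposition you are trying to prove, so the argument is circular. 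Even granting $\bar a\nmodels p$, the transfer step has a second hole: knowing $r_i\!\upharpoonright\!M\bar a=q_0$ and $r_i\!\upharpoonright\!B'=\bar p\,|\,B'$ does not determine $r_i$ on formulas $\psi(x,\bar a,\bar c)$ with $\bar c\in B'\setminus M$, and there is no reason an arbitrary extension $r_i$ of $\bar p\,|\,B'$ should itself be an heir or respect $d_p$.

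The paper avoids all of this by going in the opposite direction: instead of lifting regularity from $M$ up to $B'$, it pulls the offending parameters down into $M$ using the heir property. Assuming $(\bar p,\phi)$ is not strongly regular, one gets $B=M\bar b$, an $a$ satisfying $\phi$ with $a\nmodels\bar p\,|\,B$, and $c\models\bar p\,|\,B$ with $c\nmodels\bar p\,|\,Ba$. Encode ``$a$ is in $\phi$ but not in $\bar p\,|\,B$'' and ``$c$ disagrees with $\bar p$ over $a$'' by formulas involving the defining schema $d_p$, quantify $a$ out existentially, and then use that $\tp(c/M\bar b)$ is an heir of $p$ to replace $\bar b$ by some $\bar m\in M$. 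The existential witness $a'$ now satisfies $\phi$ and provably fails $p$ (because the pulled-back formula contains $\neg(d_p\theta)(\bar m)$), so local strong regularity applies over $Ma'$ and gives an immediate contradiction with $c\models p$ but $c\nmodels\bar p\,|\,Ma'$. The heir pull-back is the missing idea in your proposal.
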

\begin{proof}
Suppose that $(\bar{p}(x),\phi(x))$ is not strongly regular. Then
there are $B=M\bar{b}$ and $a\in\cl_{\bar{p}}(B)\cap\phi(\bar{M})$
and $c\models \bar{p}\,|B$ such that $c$ does not realize
$\bar{p}\,|\,Ba$. Witness $a\in\cl_{\bar{p}}(B)$ by
$\theta(y,\bar{z})$   which is over $M$, implies $\phi(y)$, and
$\models\theta(a,\bar{b})\wedge\neg(d_p\theta)(\bar{b})$  \ (where
$d_p$ is the defining schema of $p$). Similarly, find
$\varphi(x,y,\bar{z})$ over $M$ such that
$\models\varphi(c,a,\bar{b})\wedge\neg(d_p\varphi)(a,\bar{b})$.
\begin{center}
$\models(\exists
y)(\theta(y,\bar{b})\wedge\neg(d_p\theta)(\bar{b})\wedge
\varphi(c,y,\bar{b})\wedge\neg(d_p\varphi)(y,\bar{b}))$.
\end{center}
Since $\tp(c/M\bar{b})$ is an heir of $p(x)$ there is $\bar{m}\in
M$ and $a'$ such that
\begin{center}
$\models \theta(a',\bar{m})\wedge\neg(d_p\theta)(\bar{m}) \wedge
\varphi(c,a',\bar{m})\wedge\neg(d_p\varphi)(a',\bar{m})$.
\end{center}
The first two conjuncts witness   $a'\in\phi(\bar{M})\setminus
p(\bar{M})$ while the last two witness that   $c$ is not a
realization of $\bar{p}\,|\,Ma'$. A contradiction.
\end{proof}

\smallskip
For the sake of this section we will call a sequence
$(a_i\,|\,i\in\alpha)$ a {\em coheir sequence over $C$} if
$\tp(a_j/M(a_i\,|\,i<j))$ is finitely satisfiable in $C$ for any
$j<\alpha$. In particular $\tp(a_{0}/C)$ is finitely satisfiable
in $C$.

\begin{prop}\label{Psymm}
Suppose that $p(x)\in S_1(C)$ is locally strongly regular via
$x=x$  and that there exists an infinite, totally indiscernible
(over $C$) sequence of realizations of $p$ which is a coheir sequence over $C$.
Then $p$  has a global
$C$-invariant extension $\bar{p}$ such that $(\bar{p}(x),x=x)$ is
strongly regular and  generically stable.
\end{prop}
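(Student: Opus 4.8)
The plan is to build a global $C$-invariant extension $\bar p$ of $p$ directly from the given totally indiscernible coheir sequence, check that it is well-defined and invariant, then use local strong regularity to upgrade it to strong regularity of $(\bar p(x), x=x)$, and finally invoke Corollary \ref{C1} to get generic stability. First I would fix the infinite totally indiscernible coheir sequence $I = (a_i : i < \omega)$ of realizations of $p$ over $C$. Because $I$ is a coheir sequence over $C$, each $a_j$ realizes the unique coheir of $p$ over $C(a_i : i<j)$; one checks in the usual way that an infinite sequence which is both totally indiscernible over $C$ and a coheir sequence over $C$ generates a global type in the averaging sense --- namely, define $\bar p(x)$ by putting $\phi(x,\bar b) \in \bar p$ (for $\bar b$ from $\bar M$) iff $\models \phi(a_i, \bar b)$ for all but finitely many $i$. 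Total indiscernibility plus the fact that only finitely many tuples from $I$ can be ``swallowed'' by any finite set of parameters (a pigeonhole/Ramsey argument of the type already used in the proof of Proposition \ref{GS}(i)) guarantees that this is a complete, consistent, global type extending $p$. Its $C$-invariance follows because the defining recipe refers only to $I$ and the type of $I$ over $C$ is determined, so any $C$-automorphism fixing $C$ permutes the cofinite-versus-finite dichotomy trivially. I would also note that $I$ is then a Morley sequence in $\bar p$ over $C$, and that $\bar p$ is symmetric by construction (total indiscernibility of $I$).

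Next I would show $(\bar p(x), x=x)$ is strongly regular. The hypothesis gives that $p(x) \in S_1(C)$ is locally strongly regular via $x=x$. The issue is that Proposition \ref{P03} is stated for $p$ \emph{definable} over a \emph{model} $M$, whereas here $p$ lives over an arbitrary set $C$ and we have not assumed it definable --- so I cannot cite \ref{P03} verbatim. Instead I would argue along the same lines: suppose $(\bar p, x=x)$ fails strong regularity, so there are $B = C\bar b \supseteq C$, an element $a \in \cl_{\bar p}(B) \cap \bar M$ (with $a \not\models p$, i.e.\ $a \in \cl_{\bar p}(B)$ already means $a\notmodels \bar p | B$, and in particular $a$ is not a realization of $p$ over $C$), and $c \models \bar p | B$ such that $c$ does not realize $\bar p | Ba$. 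Witness $a \in \cl_{\bar p}(B)$ by a formula $\theta(y,\bar b) \in \tp(a/B)$ with $\theta(y,\bar b)\notin \bar p$, and witness the failure $c\notmodels\bar p|Ba$ by $\varphi(x,a,\bar b)\in\tp(c/Ba)$ with $\varphi(x,a,\bar b)\notin\bar p$. Now pull these back to $C$ using that $\bar p$ is the average type over $I$: each of the ``$\notin \bar p$'' conditions says the corresponding formula fails for cofinitely many $a_i$, and each ``$\in \tp$'' condition, combined with finite satisfiability of the coheir sequence in $C$, lets me replace $\bar b$ by a tuple $\bar m$ from $C$ and $a$ by some $a'$ (a realization of $\phi=x=x$, i.e.\ any element, not realizing $p$ over $C$) so that $\theta(a',\bar m)$, $\theta(y,\bar m)\notin p$, $\varphi(c',a',\bar m)$, $\varphi(x,a',\bar m)\notin p$ all hold for a suitable $c'$ --- more precisely I would realize the conjunction $(\exists y)(\theta(y,\bar b)\wedge\cdots)$ inside an appropriate extension of $C$ and then use finite satisfiability to push the parameters into $C$, getting a witness to the failure of the \emph{unique-extension} clause in the definition of local strong regularity via $x=x$. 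That contradiction establishes strong regularity of $(\bar p(x), x=x)$.

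Finally, since $\bar p$ is $C$-invariant, $(\bar p(x), x=x)$ is strongly regular, and $\bar p$ is symmetric (as $I$ is totally indiscernible and $\bar p$ symmetric by construction), Corollary \ref{C1} applies directly and yields that $\bar p(x)$ is generically stable. (Equivalently one can argue via Theorem \ref{Pr1}(i): strong regularity plus symmetry makes $\cl^C_{\bar p}$ a pregeometry operator on $\bar M$, and then Theorem \ref{Tpr}(ii) gives generic stability.) The main obstacle I anticipate is the second step: carefully setting up the ``averaging over $I$'' description of $\bar p$ so that the heir-type pull-back argument of Proposition \ref{P03} goes through when $C$ is merely a set and $p$ is not assumed definable --- in particular one must use the coheir (finite-satisfiability-in-$C$) property of $I$ as the substitute for ``$\bar p$ is an heir of a definable type over a model'' and check that the finitely many exceptional indices in the average-type definition never obstruct the parameter-replacement step.
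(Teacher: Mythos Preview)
There is a genuine gap in your construction of $\bar p$. You define $\bar p$ as the ``average type'' of $I$ (putting $\phi(x,\bar b)\in\bar p$ iff cofinitely many $a_i$ satisfy $\phi(x,\bar b)$) and assert that this is complete, appealing to total indiscernibility, the coheir property, and ``a pigeonhole/Ramsey argument of the type already used in the proof of Proposition~\ref{GS}(i)''. But Proposition~\ref{GS}(i) is proved \emph{assuming} generic stability --- the finite/cofinite dichotomy there is exactly the hypothesis of Definition~\ref{D1}. In an arbitrary theory, a totally indiscernible coheir sequence over $C$ need \emph{not} have a well-defined average type over $\bar M$: for a formula $\phi(x,\bar b)$ with $\bar b$ outside $C$, nothing prevents both $\{i:\models\phi(a_i,\bar b)\}$ and its complement from being infinite. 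Your vague remark that ``only finitely many tuples from $I$ can be swallowed by any finite set of parameters'' presupposes either a pregeometry or generic stability, neither of which is yet available. So the object $\bar p$ is not shown to exist, and the rest of the argument (including the $C$-invariance claim and the pull-back step mimicking Proposition~\ref{P03}) has nothing to stand on.

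The paper's proof avoids this by \emph{not} attempting to define $\bar p$ as an average. Instead it first establishes the key technical lemma that each $p_n(x)=\tp(a_{n+1}/Ca_1\ldots a_n)$ is itself locally strongly regular via $x=x$ (this uses both total indiscernibility and finite satisfiability in $C$ in an essential way), and hence so is $p_{J}$ for any indiscernible extension $J$ of $I$. Taking $J$ \emph{maximal} in $\bar M$, no element of $\bar M$ realizes $p_J$, so local strong regularity of $p_J$ forces a \emph{unique} global extension $\bar p$; invariance and strong regularity then follow from the local strong regularity of the various $p_{I_0}$, and generic stability from Corollary~\ref{C1}. The essential idea you are missing is precisely this propagation of local strong regularity from $p$ to the $p_n$'s --- without it there is no mechanism to produce a complete global type.
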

\begin{proof}Let $I=\{a_i\,:\, i\in\omega\}$ be a totally indiscernible (over $C$)
sequence of realizations of  $p$ which is a coheir sequence in $C$ and let
$p_n(x)=\tp(a_{n+1}/Ca_1...a_n)$. We will first prove that each
$p_n(x)$   is locally strongly regular via $x=x$. Suppose, on the
contrary, that   $p_n(x)$ is not locally strongly regular. Then
there are $b_1 ... b_k=\bar{b}$, with none realizing $p_n(x)$,
such that $p_n$ has at least two extensions in
$S_1(C\bar{a}\bar{b})$ (here $\bar{a}=a_1...a_n$). Let
$\varphi(x,\bar{z},\bar{y})$ be over $C$ and such that both
$\varphi(x,\bar{a},\bar{b})$ and $\neg\varphi(x,\bar{a},\bar{b})$
are consistent with $p_n(x)$.

Choose $\theta_i(y_i,\bar{a})\in\tp(b_i/C\bar{a})$ witnessing that
$b_i$ does not realize $p_n(y_i)$ and let \
$\phi(x_1,x_2,\bar{a})$ \ be \ \
\begin{center}
$(\exists \bar{y})\left(\bigwedge_{1\leq i\leq
n}(\theta_i(y_i,\bar{a}) \wedge\neg\theta_i(x_2,\bar{a}))
\wedge\neg(\varphi(x_1,\bar{a},\bar{y})\leftrightarrow
 \varphi(x_2,\bar{a},\bar{y}))\right)$.\end{center} It is,
clearly, over $C$ and we show \
$\models\phi(a_{n+2},a_{n+1},\bar{a})$. \ By our assumptions on
$\varphi$ and $\bar{b}$, there is
$\bar{b}'\equiv\bar{b}(C\bar{a})$ such that $\models
\neg(\varphi(a_{n+1},\bar{a},\bar{b})\leftrightarrow
 \varphi(a_{n+1},\bar{a},\bar{b}'))$. Also $\models
\bigwedge_{1\leq i\leq
n}(\theta_i(b_i,\bar{a})\wedge\theta_i(b_i',\bar{a})
\wedge\neg\theta_i(a_{n+1},\bar{a}))$. \ Thus for any $c\in C$
either $\bar{b}$ or $\bar{b}'$ in place of $\bar{y}$ witnesses
$\models\phi(c,a_{n+1},\bar{a})$ and, since
$\tp(a_{n+2}/C\bar{a}a_{n+1})$ is finitely satisfiable in $C$, we
conclude $\models\phi(a_{n+2},a_{n+1},\bar{a})$.

By total indiscernibility, $\tp(\bar{a}/Ca_{n+1}a_{n+2})$ is finitely satisfiable
in $C$, so there are $\bar{c}\in C$ and $\bar{d}$ such that
\begin{center}
$\bigwedge_{1\leq i\leq n}(\theta_i(d_i,\bar{c})
\wedge\neg\theta_i(a_{n+1},\bar{c}))
\wedge\neg(\varphi(a_{n+2},\bar{c},\bar{d})\leftrightarrow
 \varphi(a_{n+1},\bar{c},\bar{d}))$.\end{center}
$\bigwedge_{1\leq i\leq n}(\theta_i(d_i,\bar{c})
\wedge\neg\theta_i(a_{n+1},\bar{c}))$
witnesses that no $d_i$   realizes $p$, and
$\neg(\varphi(a_{n+2},\bar{c},\bar{d})\leftrightarrow
 \varphi(a_{n+1},\bar{c},\bar{d}))$
witnesses that $p$ does not have a unique extension over
$C\bar{d}$; a contradiction.

\smallskip
Therefore each $p_n(x)$ is locally strongly regular via $x=x$. It
easily follows that $p_I(x)=\cup_{n\in\omega}p_n(x)\in S_1(CI)$ is
locally strongly regular via $x=x$ as well. Moreover, the same is
true    whenever $I'\subset \bar{M}$ is an indiscernible (over
$C$) extension of $I$; then $I'$ is also totally indiscernible
over $C$ and $p_{I'}(x)\in S(CI')$, defined by:
\begin{center}
$\phi(x,\bar{a'})\in p_{I'}$ (where $\phi(x,\bar{y})$ is over $C$
and $\bar{a}'\in I'$) \ iff \ $\phi(x,\bar{a})\in p_{I}(x)$ for
some $\bar{a}\in I$ with $\bar{a}\equiv \bar{a}'\,(C)$
\end{center}
is locally strongly regular  via $x=x$ and does not split over $C$.

\smallskip Now let $J\subset \bar{M}$ be a maximal indiscernible
(over $C$) extension of $I$. By total indiscernibility and
maximality of $J$  no element of $\bar{M}\setminus CJ$ realizes
$p_{J}(x)$, so the local strong regularity implies that $p_{J}$ has
a unique global extension $\bar{p}$. Since $p_{J}(x)$ does not
split over  $C$, it is easy to conclude that  $\bar{p}$ is
$C$-invariant. To show that $(\bar{p}(x),x=x)$ is strongly regular
let  $A \supseteq CI$ be small and we will prove that
$\bar{p}\,|\,A\vdash \bar{p}\,|\,Ab$ for any $b$ which does not
realize $\bar{p}\,|\,A$. \ So fix such a $b$ and let $A_0\subset
A$ be maximal such that $I_0=I\cup A_0$ is a Morley sequence in
$\bar{p}$ over $C$. Then $\bar{p}\,|\,CI_0(x)$ is  $p_{I_0}(x)$
(as defined above for $I'=I_0$), so is locally strongly regular
via $x=x$. The maximality of $A_0$ implies that no $a\in A$
realizes $\bar{p}\,|\,CI_0$  so, by local strong regularity,
$\bar{p}\,|\,CI_0\vdash \bar{p}\,|\,A$. In particular, since $b$
does not realize $\bar{p}\,|\, A$, we have that $b$ does not
realize $\bar{p}\,|\,CI_0$ either. Thus no element of $Ab$
realizes $\bar{p}\,|\,CI_0(x)$ so $\bar{p}\,|\,CI_0(x)\vdash
\bar{p}\,|\,Ab$; in particular $\bar{p}\,|\,A\vdash p\,|\,Ab$ and
$(\bar{p}(x),x=x)$ is strongly regular.

Having proved that  $(\bar{p}(x),x=x)$ is strongly regular  we can
apply Corollary  \ref{C1} to $\bar{M}$ and $\bar{p}$.
Note that $I$ is a Morley sequence in $\bar{p}$ over $C$, and is totally indiscernible, so
$\bar{p}$ is symmetric, and  hence generically stable.
\end{proof}

Our next goal is to prove that the generic type of a quasiminimal
structure is locally  strongly regular via $x=x$. This we will do
in a more general situation, for any  $M$ and   $p\in S_1(M)$ for
which $M$ is not finitely $\Cl_p$-generated  (with notation as in
Section \ref{s6}).

\begin{prop}\label{Pfg}
Suppose    $p\in S_1(M)$   and    $M$ is not finitely
$\Cl_p$-generated. Then:

\begin{enumerate}\item[(i)]   Whenever $I\subseteq M$ is a maximal $\Cl_p$-free
sequence then   $(p\,|\,I)(x)$ is locally strongly regular via
$x=x$, and \ $(p\,|\,I)(x)\vdash p(x)$.

\item[(ii)]    $p(x)$ is locally strongly regular via $x=x$.
\end{enumerate}
\end{prop}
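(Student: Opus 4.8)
The plan is to prove (i) first and then deduce (ii) as an immediate consequence. For (i), fix a maximal $\Cl_p$-free sequence $I \subseteq M$; since $M$ is not finitely $\Cl_p$-generated, $I$ must be infinite. The key point is that by maximality of $I$, for every $b \in M \setminus \Cl_p(I)$ we would get a longer $\Cl_p$-free sequence — wait, more carefully: maximality says that there is no $b$ with $b \notin \cl_p(I \cup \{\text{earlier terms}\})$ that extends $I$, so in fact $M = \Cl_p(I)$; equivalently every element of $M$ is in $\cl_p^n(I)$ for some $n$. First I would show $(p \mid I)(x) \vdash p(x)$: this amounts to showing that the only type over $MI$... actually over $I$ we need that $p \mid I$ already determines $p$; but since every $\phi(x,\bar m) \in p$ with $\bar m \in M$ has $\bar m \subseteq \Cl_p(I)$, and non-splitting-type reasoning (or rather, the fact that realizations of $p \mid I$ realizing $\phi(x,\bar m)$-or-not is forced) should pin it down. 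I expect this to follow by an induction on the $\cl_p^n$-rank of the parameters $\bar m$, mirroring the argument in the proof of Lemma \ref{Lclosure}(i): if $\bar m \in \cl_p(I)$ the behaviour of $p$ on formulas over $\bar m$ is controlled by $p \mid I$ via the witnessing $\theta$-formulas, and then one lifts through the layers $\cl_p^{n}$.

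The heart of (i) is local strong regularity of $q := p \mid I$ via $x=x$. So let $\bar b = (b_1,\dots,b_k)$ be a finite tuple from $\bar M$, none of whose entries realizes $q$, and suppose for contradiction that $q$ has two distinct extensions over $I\bar b$, witnessed by a formula $\varphi(x, \bar b)$ (over $I$) with both $\varphi(x,\bar b)$ and $\neg\varphi(x,\bar b)$ consistent with $q$. Since $b_i$ does not realize $q = p \mid I$, and — crucially — $p \mid I \vdash p$ (the first clause), $b_i$ does not realize $p$ either; so there is a formula $\theta_i$ over $I$ (really over a finite part of $I$ together with possibly $M$-parameters absorbed via $\Cl_p$-generation) witnessing $b_i \in \cl_p(\text{finite subset of }I)$, i.e.\ $\theta_i \notin p$. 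Now I would run exactly the "$\equiv_\varphi$"-style construction from the proof of Lemma \ref{Lclosure}(i) (or the $\phi(x_1,x_2)$ construction in Proposition \ref{Psymm}): form the $I$-formula expressing "$x_1$ and $x_2$ disagree on some $\varphi(x,\bar y)$ with all $y_i$ in the small set cut out by the $\theta_i$", and use it to produce a non-indiscernible $\cl_p$-free sequence over a finite subset of $I$. Since $M$ is not finitely $\Cl_p$-generated, there is room inside $M$ to realize a $\Cl_p$-free pair over the relevant finite data together with two realizations of $q$, and the disagreement formula separates them — contradicting indiscernibility of $\cl_p$-free sequences (which holds because $q$, being $p\mid I$ and $p$ not splitting appropriately, does not split over finite subsets of $I$... here I would need that $p$ does not split over suitable finite sets, which is where I must be a little careful since $p$ is only a type over $M$, not assumed based on anything — but the $\cl_p$-free sequences over finite $C$ are indiscernible by "the standard argument" as noted before Lemma \ref{Lclosure}, and that is all I need).

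Part (ii) then follows quickly: take any maximal $\Cl_p$-free $I \subseteq M$ (exists by Zorn and is infinite since $M$ is not finitely $\Cl_p$-generated); by (i), $q = p\mid I$ is locally strongly regular via $x=x$ and $q \vdash p$. If $\bar b$ is a finite tuple of realizations of $x=x$ (i.e.\ any finite tuple) none realizing $p$, then none realizes $q$ either (since $q \vdash p$), so $q$ has a unique extension over $I\bar b$ by (i); restricting back down, $p$ has a unique extension over $M\bar b$ — using once more that $q \vdash p$ to transfer uniqueness from over $I\bar b$ to over $M\bar b$. Hence $p$ is locally strongly regular via $x=x$.

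\medskip
\noindent\textbf{Main obstacle.} The delicate step is the contradiction argument in (i): engineering, inside $M$ itself, a $\cl_p$-free sequence that both (a) lies over the finite parameter-set carrying $\varphi$ and the $\theta_i$'s, and (b) contains two realizations of $q$ on which the disagreement formula is witnessed — and then checking this sequence is genuinely $\cl_p$-free over a \emph{finite} set so that indiscernibility applies. The role of "$M$ not finitely $\Cl_p$-generated" is precisely to guarantee the existence of such realizations inside $M$; marshalling it correctly, and correctly reducing all the $M$-parameters hidden in "$\bar b \in \Cl_p(I)$" to genuinely finite data, is where the real work lies. Everything else is a faithful adaptation of the arguments already given in Lemmas \ref{Lsym}, \ref{Lclosure} and Proposition \ref{Psymm}.
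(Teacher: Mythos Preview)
Your proposal has a genuine gap at the heart of (i). You plan to derive a contradiction by producing a non-indiscernible $\cl_p$-free sequence over a finite set and invoking ``$\cl_p$-free sequences are indiscernible by the standard argument''. But look again at the sentence before Lemma~\ref{Lclosure}: that argument requires that $p$ \emph{does not split} over the base set. In Lemma~\ref{Lclosure} this was available because $p$ was assumed to be based on $A$; in the present proposition there is \emph{no} basedness or non-splitting hypothesis on $p$ whatsoever, so the indiscernibility you need is simply not there. You flag this worry yourself and then wave it away --- but it cannot be waved away. Relatedly, your plan to ``find room inside $M$'' for a $\Cl_p$-free pair on which the disagreement formula holds is not justified: the witnesses $d_1,d_2$ live in $\bar M$, not in $M$, and without non-splitting (or something replacing it) you have no mechanism to transfer the disagreement to elements of $M$ or of $I$. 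Your proposed induction on $\cl_p^n$-rank to prove $p\mid I \vdash p$ \emph{before} local strong regularity runs into the same wall.

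The paper's argument sidesteps all of this with a coheir trick that you are missing. One arranges $I=\{a_i:i<\alpha\}$ with $\alpha$ a limit ordinal; then for the offending $\phi(x,\bar b)$, one of $\{i:\models\phi(a_i,\bar b)\}$ or its complement is cofinal in $\alpha$, say the former, call it $I_0$. Cofinality makes $p\mid I$ finitely satisfiable in $I_0$, so one may take $d_1,d_2$ with $\tp(d_2/Id_1\bar b)$ and $\tp(d_1/I)$ finitely satisfiable in $I_0$. Two successive coheir pull-downs replace $d_2$ by some $a_i\in I_0$ and then $d_1$ by some $a_j\in I_0$, and a final application of elementarity produces $\bar b'\in M$ with $\bigwedge\theta_k(b_k')\wedge\phi(a_i,\bar b')\wedge\neg\phi(a_j,\bar b')$. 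Now the contradiction comes not from any indiscernibility principle but from the bare fact that $a_i,a_j\in I$ are $\Cl_p$-free, hence both realize $p\mid\cl_p(\emptyset)$, while $\bar b'\subseteq\cl_p(\emptyset)$ --- so $a_i$ and $a_j$ cannot disagree on $\phi(x,\bar b')$. Only after this is $p\mid I\vdash p$ deduced, using the just-proved local strong regularity together with maximality of $I$.
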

\begin{proof}
(i) \ Let $I\subseteq M$ be a  maximal $\Cl_p$-free sequence. We
will prove that $(p\,|\,I)(x)$ is locally strongly regular via
$x=x$.  After passing to a subset and rearranging $I$ if necessary
we may assume that  $I=\{a_i\,|\,i<\alpha\}$ where $\alpha$ is a
limit ordinal.

Suppose, on the contrary, that there are $d_1,d_2\in \bar{M}$
realizing $p\,|\,I$, a formula $\phi(x,\bar{y})$, and a tuple
$\bar{b}=b_1b_2...b_n\in \bar{M}^n$ such that none of $b_i$'s
realize $p\,|\,I$ and:
\begin{center} $\models \neg\phi(d_1,\bar{b})\wedge
\phi(d_2,\bar{b}))$.\end{center} Choose
$\theta_i(y_i)\in\tp(b_i/I)$ such that $\theta_i(x)\notin p|I(x)$.
To simplify notation    we will assume that $\phi(x,\bar{y})$, as
well as  each $\theta_i(x)$, are over $\emptyset$ (absorbing a few
parameters from $I$ into the language won't hurt the generality).
At least one of
\begin{center}
$\{i<\alpha\,|\,\models \phi(a_i,\bar{b})\}$ \ and \
$\{i<\alpha\,|\,\models \neg\phi(a_i,\bar{b})\}$\end{center} is
cofinal in $\alpha$. Assume the first one is cofinal and let \
$I_0=\{a_i\,|\,\models \phi(a_i,\bar{b})\}$. The cofinality
implies: first  that $p|I$ is finitely satisfiable   in $I_{0}$
(so $\tp(d_1/I)$ is finitely satisfiable in $I_0$); second, that
$p(x)\cup\{\phi(x,\bar{b}\}$ is finitely satisfiable in $I_0$, so there is a type
containing it in  $S_1(Id_1\bar{b})$ which is finitely satisfiable in $I_0$; wlog,
let $d_2$ realizes it. Thus, both $\tp(d_1/I)$ and
$\tp(d_2/Id_1\bar{b})$ are finitely satisfiable in $I_0$. \begin{center} $\models
(\exists\bar{y})(\bigwedge_{1\leq i\leq
n}\theta_i(y_i)\wedge\neg\phi(d_1,\bar{y})\wedge\phi(d_2,\bar{y}))$.
\end{center}
Since $\tp(d_2/Id_1)$ is finitely satisfiable in $I_0$, there is $a_i\in I_0$ such
that:
\begin{center}
$\models (\exists\bar{y})(\bigwedge_{1\leq i\leq
n}\theta_i(y_i)\wedge\neg\phi(d_1,\bar{y})\wedge\phi(a_i,\bar{y}))$.
\end{center}
Since $\tp(d_1/M)$ is finitely satisfiable in $I_0$, there is $a_j\in I_0$ such
that:
\begin{center}
$\models (\exists\bar{y})(\bigwedge_{1\leq i\leq
n}\theta_i(y_i)\wedge\neg\phi(a_j,\bar{y})\wedge\phi(a_i,\bar{y}))$.
\end{center}
Finally, since $a_i,a_j\in M$ there is
$\bar{b}'=b_1'b_2'...b_n'\in M^n$ satisfying:
\begin{center}
$\models \bigwedge_{1\leq i\leq
n}(\theta_i(b_i')\wedge\neg\phi(a_j,\bar{b}')\wedge\phi(a_i,\bar{b}'))$.
\end{center}
But $\bigwedge_{1\leq i\leq n}\theta_i(b_i')$ \ implies \
$\bar{b}'\subset\cl_p(\emptyset)$ and thus  \
$\tp(a_i/\cl_p(\emptyset))\neq \tp(a_j/\cl_p(\emptyset))$. \ A
contradiction. Therefore $(p\,|\,I)(x)$ is locally strongly regular
via $x=x$. \ The maximality of $I$ implies $M=\cl_p(I)$ so, by
local strong regularity of $p\,|\,I$, we have \ $(p\,|\,I)(x)\vdash
p(x)$.

\smallskip
(ii) \ $p\,|\,I$ is locally strongly regular and $(p\,|\,I)(x)\vdash
p(x)$    implies that $p(x)$ is locally strongly regular via
$x=x$.
\end{proof}

\begin{cor}\label{C3} The generic type of a quasiminimal structure
is locally strongly regular via $x=x$.
\end{cor}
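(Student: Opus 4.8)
The plan is to deduce this immediately from Proposition \ref{Pfg}(ii). Let $M$ be a quasiminimal structure and let $p(x) \in S_1(M)$ be its generic type (all formulas defining co-countable subsets of $M$). The one thing I need to check before quoting Proposition \ref{Pfg}(ii) is that $M$ is not finitely $\Cl_p$-generated. Recall from Section \ref{s6} that in the quasiminimal setting $\Cl_p$ coincides with Zilber's countable closure operator $\ccl$. So ``$M$ is not finitely $\Cl_p$-generated'' unwinds to: there is no finite tuple $\bar a$ from $M$ with $M = \ccl(\bar a)$.

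This is where the cardinality hypothesis in the definition of quasiminimality does the work. Since $M$ is uncountable by definition, and since for any finite (indeed, any countable) $\bar a$ the set $\ccl(\bar a)$ is countable (this is recorded in the Remark just after Definition \ref{D4}, part (ii): if $A$ is countable then $\ccl(A)$ is countable), we have $\ccl(\bar a) \subsetneq M$ for every finite $\bar a$. Hence $M$ is not finitely $\Cl_p$-generated.

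Now Proposition \ref{Pfg}(ii) applies verbatim with this $M$ and this $p$, and it gives precisely that $p(x)$ is locally strongly regular via $x=x$, which is the statement of the corollary. There is essentially no obstacle here: the only content beyond invoking Proposition \ref{Pfg} is the elementary observation that quasiminimality (uncountability of $M$ together with countability of the countable closure of any countable set) forces the ``not finitely $\Cl_p$-generated'' hypothesis, and this is immediate from the definitions and the basic properties of $\ccl$ already established.
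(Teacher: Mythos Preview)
Your proposal is correct and is exactly the intended argument: the paper states Corollary~\ref{C3} without proof because it is immediate from Proposition~\ref{Pfg}(ii) once one observes that $\Cl_p=\ccl$ and that $\ccl$ of a finite set is countable while $M$ is uncountable. One small slip: the countability of $\ccl(A)$ for countable $A$ is recorded in the Remark at the start of Section~\ref{s5}, not after Definition~\ref{D4}.
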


\begin{thm}\label{Tccl} Suppose that $p\in S_1(M)$ and  that
$(M,\Cl_p)$ is an infinite dimensional
pregeometry. Then $p$ is definable and $(\bar{p}(x),x=x)$ is
strongly regular and generically stable  (where $\bar{p}$ is the unique global heir of $p$).
\end{thm}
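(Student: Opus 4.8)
The plan is to deduce the statement by feeding Proposition \ref{Pfg} into Proposition \ref{Psymm}. First I would note that since $(M,\Cl_p)$ is an infinite-dimensional pregeometry, $M$ is not finitely $\Cl_p$-generated — a finite generating set would force $\dim(M/\emptyset)$ to be finite — so the running hypothesis of Section \ref{s6} is met and Proposition \ref{Pfg} applies. Fix a maximal $\Cl_p$-free sequence $I\subseteq M$. By Proposition \ref{Pfg}(i) the type $q:=p\,|\,I\in S_1(I)$ is locally strongly regular via $x=x$ and $q\vdash p$. The goal is to put $q$ into the situation of Proposition \ref{Psymm}, that is, to produce an infinite, totally indiscernible (over $I$), coheir sequence over $I$ consisting of realizations of $q$.

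The key point — and what I expect to be the main obstacle — is the total indiscernibility of that sequence, and this is exactly where the pregeometry hypothesis on $\Cl_p$ is used. First, $q$ is finitely satisfiable in $I$: if $\phi(x,\bar a)\in q$ with $\bar a\subseteq I$, then any $c\in I\setminus\bar a$ satisfies $c\notin\Cl_p(\bar a)$, hence $c\models p\,|\,\bar a$ and $\models\phi(c,\bar a)$. So $q$ extends to a completion $r\in S_1(M)$ finitely satisfiable in $I$, and starting from $b_0\models r$ one builds a coheir sequence $(b_n\,:\,n<\omega)$ over $I$ with each $b_n\models q$; such a sequence is automatically indiscernible over $I$. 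To see it is \emph{totally} indiscernible over $I$, suppose not; reducing to the first adjacent transposition, there is a formula $\psi(x,y,\bar a)$ over $I$ with $\models\psi(x,y,\bar a)\rightarrow\neg\psi(y,x,\bar a)$ and $\models\psi(b_i,b_j,\bar a)$ for all $i<j$. Since $\tp(b_1/Mb_0)$ is finitely satisfiable in $I$ and $b_1\notin I$, the set $S=\{c\in I:\,\models\psi(b_0,c,\bar a)\}$ is infinite; choose $c_1\in S\setminus\bar a$ and then $c_2\in S$ with $c_2\notin\Cl_p(\bar a c_1)$, which is possible because $\Cl_p(\bar a c_1)$ meets $I$ in a finite set; by exchange also $c_1\notin\Cl_p(\bar a c_2)$. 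Now $\models\psi(b_0,c_i,\bar a)$ and $\tp(b_0/I)=q$ give $\psi(x,c_i,\bar a)\in q\subseteq p$ for $i=1,2$, and since $c_2\models p\,|\,\bar a c_1$ and $c_1\models p\,|\,\bar a c_2$ we get $\models\psi(c_2,c_1,\bar a)\wedge\psi(c_1,c_2,\bar a)$, contradicting the antisymmetry of $\psi$. Hence $(b_n\,:\,n<\omega)$ is totally indiscernible over $I$.

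Now Proposition \ref{Psymm}, applied with $C=I$, the type $q$, and the sequence $(b_n)$, produces a global $I$-invariant extension $\bar q$ of $q$ with $(\bar q(x),x=x)$ strongly regular and generically stable. Finally I would reconcile $\bar q$ with the statement: since $q\vdash p$ and $q\subseteq\bar q\,|\,M$, we have $\bar q\,|\,M=p$; being generically stable $\bar q$ is definable, and by $I$-invariance definable over $I$, so $p$ itself is definable (over $I$). Its unique global heir $\bar p$ is gotten by applying the defining schema of $p$ over $I$ to $\bar M$ — which is exactly how $\bar q$ arises — so $\bar p=\bar q$, and thus $(\bar p(x),x=x)$ is strongly regular and $\bar p$ is generically stable. (Alternatively, once $p$ is known to be definable, strong regularity of $(\bar p,x=x)$ follows from Proposition \ref{P03} and generic stability from Corollary \ref{C1}, using that a Morley sequence in $\bar p$ prolonging $(b_n)$ is totally indiscernible.)

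Essentially everything except the total indiscernibility of the coheir sequence is bookkeeping or a direct appeal to the results of Sections \ref{s6} and \ref{s7}; the one genuinely substantial move is converting the exchange axiom for $\Cl_p$ into first-order symmetry of the $2$-type of a generic pair, which the argument above does by transporting any putative asymmetry along the coheir sequence back into the $\Cl_p$-free set $I$, where exchange (together with $q\vdash p$) rules it out.
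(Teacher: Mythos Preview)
Your proof is correct and follows essentially the same strategy as the paper's: use Proposition~\ref{Pfg} to get local strong regularity, exploit the exchange property of $\Cl_p$ to obtain symmetry of the $2$-type of a generic pair, produce a totally indiscernible coheir sequence, and feed everything into Proposition~\ref{Psymm}. The only cosmetic difference is that the paper applies Proposition~\ref{Psymm} with $C=M$ (after proving a standalone Claim that $\Cl_p$-independent pairs in $M$ have symmetric type, and extracting an $M$-indiscernible coheir sequence from $I$), whereas you work over $C=I$ and then reconcile $\bar q$ with $\bar p$ at the end using $q\vdash p$; both routes are fine. One small imprecision: a coheir sequence over $I$ is not \emph{automatically} indiscernible---you should build it as a Morley sequence in a fixed global extension of $q$ finitely satisfiable in $I$ (which exists since, as you observed, $q$ and hence $p=r$ is finitely satisfiable in $I$); with that understood, your total-indiscernibility argument goes through verbatim.
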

\begin{proof}
First note that, by Proposition \ref{Pfg}(ii), $p$ is locally
strongly regular via $x=x$. Now we will find an infinite, totally
indiscernible sequence of realizations of $p$ which is a coheir sequence over
$M$. Towards this aim we first prove:

\smallskip
{\em Claim.} \ \ If    $A \subset M$ and $a,b\in M$ are
$\Cl_p$-independent over $A$ then    \ $\tp(a,b/A)=\tp(b,a/A)$.

\smallskip It suffices to prove the claim for $A$ finite.
Suppose, on the contrary, that $\phi(x,y)$ is over $A$ and \
$\models \phi(a,b)\wedge\neg\phi(b,a)$. \ Since $b\notin\Cl_p(Aa)$
we have $\phi(a,x)\in p(x)$ and,  since $a\notin\Cl_p(Ab)$, we
have $\neg\phi(b,x)\in p(x)$. \ By infinite dimensionality and
since $Aab$ is finite, there is   $e\in M\setminus\Cl_p(Aab)$.
Then $e$ realizes $p\,|\,Aab$ so $\models
\phi(a,e)\wedge\neg\phi(b,e)$. \ But $a\notin\Cl_p(Ae)$ implies
$\phi(x,e)\in p(x)$ and   $b\notin\Cl_p(Ae)$ implies
$\neg\phi(x,e)\in p(x)$. A contradiction. The claim is proved.

\smallskip

Now, let $I\subseteq M$ be a maximal $\Cl_p$-free sequence. We can
find an infinite $M$-indiscernible sequence $J = (a_{i}:i<\omega)$
such that for all $i$ any formula satisfied by $a_{i}$ over
$M\cup\{a_{j}:j<i\}$ is satisfied by some element of $I$. Using
the claim, we conclude  that $J$ is totally indiscernible over
$M$. Now apply Proposition \ref{Psymm}.
\end{proof}

\begin{cor}\label{C4}
Suppose that $M$ is quasiminimal and let $p(x)\in S_{1}(M)$ be the
``generic type" of $M$ (consisting of formulas defining
uncountable sets). Suppose that $(M,\ccl)$ is a pregeometry. Then
for some countable $\ccl$-free sequence $A\subset M$, $p$ is based
on $A$, and after adding constants for elements of $A$,  is a
homogeneous Then $p$ is countably based and symmetric. Moreover,
as a base set we can choose (some) infinite, countable,
$\ccl$-free sequence.
\end{cor}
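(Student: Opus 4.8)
First I would reduce everything to the definability of $p$ over a small set. Since $M$ is quasiminimal, $\Cl_p$ coincides with $\ccl$, and since $\ccl$ of a countable set is countable whereas $M$ is uncountable, the pregeometry $(M,\ccl)=(M,\Cl_p)$ is infinite dimensional. Hence Theorem \ref{Tccl} applies and $p$ is definable; as the language is countable, $p$ is then definable over some countable $A_0\subseteq M$. In particular $p$ does not split over $A_0$, and $\ccl(A_0C)$ is countable, hence a proper subset of $M$, for every finite $C\subset M$; so $p$ is based on $A_0$, and $p$ is already countably based.

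Next I would run the dichotomy of Theorem \ref{Tbased} with $A=A_0$ and show that its first alternative must hold. That theorem already gives that $\cl^{A_0}_p$ is a closure operator on $M$. The key observation is that on any $Y\supseteq A_0$ one has $\cl^{A_0}_p(Y)=\cl_p(Y)$, and idempotence of the closure operator $\cl^{A_0}_p$ forces $\cl_p$ to be idempotent there as well, so $\cl^{A_0}_p$ agrees with $\ccl$ on all sets containing $A_0$. Exchange for $\cl^{A_0}_p$ therefore reduces to exchange for $\ccl$, which holds by hypothesis; thus $\cl^{A_0}_p$ is a pregeometry operator. By Lemma \ref{Lclosure}(ii) every weak Morley sequence in $p$ over $A_0$ is then totally indiscernible, which contradicts the second alternative of Theorem \ref{Tbased}. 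Hence the first holds: $p$ is symmetric, $(M,\cl^{A_0}_p)$ is a homogeneous pregeometry (with generic type $p$), and the global heir $\bar p$ of $p$ is generically stable with $(\bar p(x),x=x)$ strongly regular.

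Finally I would upgrade $A_0$ to a countable $\ccl$-free sequence. Passing to the expansion $(M,c)_{c\in A_0}$, the closure operator $\cl_p$ computed there is exactly $\cl^{A_0}_p$, hence a homogeneous, in particular infinite-dimensional, pregeometry, so it contains an infinite $\cl^{A_0}_p$-free sequence; fix a countable one, $A=(a_i:i<\omega)\subseteq M$. Since $\cl^{A_0}_p$ is a closure operator containing $\cl_p$, the sequence $A$ is $\ccl$-free in $M$ (and in the expansion). Now apply Lemma \ref{L35} to the pregeometry $(M,\cl^{A_0}_p)$ and the independent sequence $A$: for every $L$-formula $\phi(x,\bar y)$ the condition $\phi(x,\bar b)\in p$ becomes equivalent to an explicit ``majority vote'' among the $\phi(a_i,\bar b)$, which is again an $L$-formula with parameters from $A$. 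Thus $p$ is $L$-definable over $A$, hence does not split over $A$ and, $A$ being countable, is based on $A$. Re-running the argument of the previous paragraph with $A$ in place of $A_0$ shows that the first alternative of Theorem \ref{Tbased} holds for $A$ as well, so $(M,\cl^A_p)$ is a homogeneous pregeometry; passing to $(M,c)_{c\in A}$, in which $\cl_p=\cl^A_p=\ccl$, this is exactly the assertion that $(M,\ccl)$ becomes a homogeneous pregeometry after naming the elements of $A$.

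The step I expect to be the main obstacle is the last one. Producing a countable $\ccl$-free sequence is trivial, but it is not clear a priori that $p$ should fail to split over the small set such a sequence generates — only non-splitting over $\ccl(A_0)$ is evident, and that set is strictly larger. What makes it work is that the defining scheme supplied by Lemma \ref{L35} is literally assembled from $L$-formulas evaluated on the independent sequence, so definability (and hence non-splitting) genuinely descends to the sequence itself. The transfer of exchange from $\ccl$ to $\cl^{A_0}_p$ in the second step also needs a small argument, but it is routine once one notices that the two operators coincide on sets containing $A_0$.
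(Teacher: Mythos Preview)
Your argument is correct, but it deviates from the paper's route in two places.

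For symmetry, the paper simply reads it off Theorem~\ref{Tccl}: that theorem already gives that $\bar p$ is generically stable (hence symmetric), so case~(1) of Theorem~\ref{Tbased} is automatic. Your detour through Lemma~\ref{Lclosure}(ii)---showing $\cl^{A_0}_p$ is a pregeometry by identifying it with $\ccl$ on sets containing $A_0$ and then invoking exchange---is sound but re-proves something already available.

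For the ``moreover'' clause, the paper proceeds differently: it chooses a countable $M_0\prec M$ of infinite $\ccl$-dimension over which $p$ is definable (and is the heir of $p|M_0$), takes $I\subset M_0$ a maximal $\ccl$-free subset of $M_0$, and applies Proposition~\ref{Pfg}(i) to $(M_0,p|M_0)$ to get $(p|I)\vdash p|M_0$, whence $p$ is invariant over $I$. Your approach---using the explicit majority-vote schema of Lemma~\ref{L35} in the expansion by $A_0$---is a genuine alternative: it gives definability of $p$ over a $\ccl$-free sequence directly, without passing through local strong regularity on a submodel. The trade-off is that the paper's method ties the base to the local strong regularity machinery of Section~\ref{s7}, while yours stays entirely within the homogeneous-pregeometry framework of Section~\ref{s4}. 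Your remark that restricting the $L(A_0)$-schema of Lemma~\ref{L35} to genuine $L$-formulas yields an $L$-definition over $A$ alone is the key observation that makes this descent work, and it is correct.

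One small wording issue: when you say ``$\cl^{A_0}_p$ is a closure operator containing $\cl_p$'' to conclude that $A$ is $\ccl$-free, what you need (and in fact have) is $\cl^{A_0}_p\supseteq\ccl$, which follows since $\cl^{A_0}_p$ is idempotent and dominates $\cl_p$.
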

\begin{proof}
Clearly, $(M,\ccl)$ is infinite-dimensional so, by Theorem
\ref{Tccl}, the generic type $p$ is definable and symmetric.
Countable baseness and symmetry follow. To prove the `moreover'
part, let $M_0\prec M$ be countable, of infinite $\ccl$-dimension,
such that $p$ is definable over $M_0$ and such that $p$ is the
heir of $p\,|\,M_0$. Further, let $I\subset M_0$ be a maximal
$\ccl$-free subset of $M_0$. Then we can apply Proposition
\ref{Pfg}(i) to $M_0$ and $I$:\ $p\,|\,I\vdash p\,|\,M_0$. In
particular, $p$ is invariant over $I$, so $I$ is a base  for $p$.
\end{proof}

Thus if $M$ is quasiminimal and $(M,\ccl)$ is a pregeometry we
know that the generic type is definable, but    defining schema
may involve parameters. In general parameters are needed   as the
following example shows.

\begin{exm} Quasiminimal structure where $\ccl$ is a pregeometry
operator but the generic type is not $\emptyset$-invariant: \ This
is a slight variation of an example from \cite{ITW}. We have two
unary predicates $U,V$ and binary relation symbols $E$ and $f$.
The domain $M$ is the disjoint union $U(M)\cup V(M)$. $U(M)$ is
uncountable and  $E$ is an equivalence relation on it   having
$\aleph_0$ many classes with all of them but one of size
$\aleph_0$.  $V(M)$ is countable and contains `names' for
$E$-classes, while \  $f:U(M)\longrightarrow V(M)$ \ is an onto
projection (and $f(a)=f(a')$ iff $E(a,a')$). \ Let $p(x)$ be the
generic type: $p(x)$ says that $x$ is in the uncountable class.
Then $\cl_p(\emptyset)=V(M)$, while
$\cl^2_p(\emptyset)=\Cl_p(\emptyset)$ contains also all the
countable $E$-classes. Also $\Cl_p(X)=\Cl_p(\emptyset)\cup X$ so
$(M,\Cl_p)$ is an infinite-dimensional pregeometry. But $p$ is not
$\emptyset$-invariant: all the elements of $U(M)$ have the same
type while $E(x,a)\wedge\neg E(x,b)\in p(x)$ for $a$ in the
uncountable class and $b$ in a countable one. Therefore $p$ is not
definable over $\emptyset$.
\end{exm}

\begin{thm}\label{T9}
Suppose that $G\subseteq M$ is a  definable group and   $p(x)\in
S_G(M)$ is locally strongly regular via $``x\in G"$. Then:

\begin{enumerate}
\item[(i)]   $p(x)$ is both left and right translation invariant
(and in fact invariant under definable bijections).

\item[(ii)]    A formula $\phi(x)$ is in $p(x)$ \  iff \ two left
(right) translates of $\phi(x)$ cover $G$ \ iff \ finitely many
left (right) translates of $\phi(x)$ cover $G$. (Hence $p(x)$ is
the unique generic type of $G$.)

\item[(iii)]   $p(x)$ is definable over $\emptyset$ and $G$ is
connected.

\item[(iv)]    $(\bar{p}(x),``x\in G")$ is strongly regular and
$\bar{G}$ is a definable-regular group. (Here $\bar{p}$ is the
unique heir of $p(x)$ and $\bar{G}\subseteq \bar{M}$ is defined by
$``x\in G"$).
\end{enumerate}
\end{thm}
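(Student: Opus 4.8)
The plan is to follow the pattern of Theorems~\ref{Tg} and~\ref{Tqg}: extract from local strong regularity that $p$ is invariant under $M$-definable bijections of $G$, deduce the ``two translates'' criterion for membership in $p$, read off definability of $p$ and connectedness of $G$, and finish via Proposition~\ref{P03} and Theorem~\ref{Tg}. We may assume $G$ is $\emptyset$-definable (otherwise name its parameters and read ``$\emptyset$'' accordingly), so multiplication and inversion on $G$ are $\emptyset$-definable; and since ``$x\in G$'' $\in p$ and $p$ is non-isolated over the model $M$, $p$ is non-algebraic, hence has at least two realisations in $\bar M$ --- this last fact drives the whole argument.

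For (i) I would take an $M$-definable bijection $f$ of $G$ and $a\models p$, and argue $f(a)\models p$ as follows: if $f(a)$ did not realise $p$, then $(f(a))$ is a single element of $G$ not realising $p$, so by local strong regularity $p$ has a unique extension over $Mf(a)$; but $a=f^{-1}(f(a))\in\dcl(Mf(a))$, so that extension is realised by $a$ alone, which forces every realisation of $p$ to equal $a$ and contradicts non-algebraicity. Thus $h(a)\models p$ for all $M$-definable bijections $h$ and all $a\models p$, and this immediately yields $f_*p=p$ for each such $f$; in particular $p$ is invariant under left and right translation by elements of $M$ and under $x\mapsto x^{-1}$. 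The easy half of (ii), that finitely many left translates of $\phi$ covering $G$ implies $\phi\in p$, then falls out: by elementarity the translating elements may be chosen in $M$, so $\bigvee_i\phi(g_i^{-1}x)$, being implied by ``$x\in G$'', lies in $p$, hence some $\phi(g_i^{-1}x)\in p$, and $M$-translation invariance gives $\phi\in p$.

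The substantive point is the converse in (ii): assuming $\phi(x)\in p$ (without loss $\phi\vdash x\in G$), with $D=\phi(\bar M)$ and $g\models p$ fixed, I would show $G(\bar M)=D\cup gD$. For $b\in G(\bar M)\setminus D$ the element $b$ does not realise $p$, and the key claim is that $g^{-1}b$ \emph{does}: if not, $(b,g^{-1}b)$ is a $2$-tuple of elements of $G$ none realising $p$, so $p$ has a unique extension over $M\{b,g^{-1}b\}$ by local strong regularity; but $g=b\cdot(g^{-1}b)^{-1}\in\dcl(M,b,g^{-1}b)$, so that extension is realised by $g$ alone, contradicting non-algebraicity as before. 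Hence $g^{-1}b$ realises $p$, so $\models\phi(g^{-1}b)$, i.e.\ $b\in gD$; since ``$\exists g\,(G=D\cup gD)$'' is first-order over $M$, two left translates of $\phi$ by elements of $M$ cover $G$. The right-sided versions are symmetric, and uniqueness of the generic type follows. I expect this claim about $g^{-1}b$ to be the main obstacle: the analogous step in Theorem~\ref{Tg} used \emph{global} right-translation invariance by an arbitrary element, which is not yet available, and the trick replacing it is precisely to apply local strong regularity to the pair $(b,g^{-1}b)$ and exploit that $g$ lies in its definable closure over $M$.

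Finally, (iii) and (iv) are short. For (iii): for $\phi(x,\bar y)$ over $\emptyset$, part (ii) shows $\{\bar c\in M:\phi(x,\bar c)\in p\}$ is defined by the $\emptyset$-formula ``two left translates of $\{x\in G:\phi(x,\bar c)\}$ cover $G$'', so $p$ is definable over $\emptyset$; and were $H\le G$ relatively definable of finite index $n>1$, then, picking coset representatives $g_1,\dots,g_n\in M$ by elementarity, both ``$x\in H$'' and ``$x\in g_2H$'' would lie in $p$ by the criterion in (ii), contradicting their inconsistency, so $G=G^0$. For (iv): $p$ is definable by (iii) and locally strongly regular via ``$x\in G$'' by hypothesis, so Proposition~\ref{P03} gives that $(\bar{p}(x),``x\in G")$ is strongly regular, where $\bar p$ is the global heir of $p$; thus $\bar G$ (defined by ``$x\in G$'' in $\bar M$) is a regular group in the sense of Definition~\ref{D5}, witnessed by $\bar p$, and Theorem~\ref{Tg} applied to $\bar G$, $\bar p$ recovers the global forms of (i)--(iii) as well.
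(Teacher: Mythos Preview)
Your proof is correct and follows essentially the same route as the paper's: both parts (i) and (ii) rest on the observation that if a finite tuple $\bar b$ of non-realizations of $p$ has some $g\models p$ in $\dcl(M\bar b)$, then the unique extension of $p$ over $M\bar b$ is algebraic, contradicting non-isolatedness. The only cosmetic difference is that the paper first records the intermediate lemma ``the product of two non-realizations of $p$ is a non-realization of $p$'' and derives the two-translate criterion from it, whereas you apply the same trick directly to the pair $(b,g^{-1}b)$.
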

\begin{proof}
(i) \ Suppose that $f:\bar{G}\longrightarrow \bar{G}$ is an
$M$-definable bijection and $a\models p$. Since $p\vdash
p\,|\,(M,f(a))$ is not possible, by local strong regularity  we
get $f(a)\models p$. Thus $p$ is invariant under $f$.

\smallskip
(ii) \ The local strong regularity of $p(x)$ implies that whenever
$g,g'\in \bar{G}$ do not realize $p$ then $g\cdot g'$ does not
realize $p$ either. It follows that $a\cdot g\models p$ whenever
$a\models p$ and $g\in \bar{G}$ does not realize $p$. Thus:
\begin{center}
$\phi(x)\in p(x)$ \ \ iff \ \ $(\forall y\in
\bar{G})(\neg\phi(y)\rightarrow\phi(y\cdot x))\in p(x)$,
\end{center}
and  \ $\phi(x)\in p(x)$ \  iff \  $\phi(\bar{G})\cup
a^{-1}\cdot\phi(\bar{G})= \bar{G}$.

\smallskip
(iii)  follows immediately from (ii), and then (iv) follows from
Proposition \ref{P03}.
\end{proof}

\bibliographystyle{amsalpha}

\end{document}